\documentclass{article}
\usepackage{amsthm}
\usepackage{amssymb}
\usepackage{amsfonts}
\usepackage{amsmath}
\begin{document}

\newtheorem{theorem}{Theorem}[section]
\newtheorem{definition}{Definition}[section]
\newtheorem{corollary}[theorem]{Corollary}
\newtheorem{lemma}[theorem]{Lemma}
\newtheorem{proposition}[theorem]{Proposition}
\newtheorem{step}[theorem]{Step}
\newtheorem{example}[theorem]{Example}
\newtheorem{remark}[theorem]{Remark}

\font\sixbb=msbm6
%\font\sevenbb=msbm7
\font\eightbb=msbm8
%\font\ninebb=msbm9
%\font\tenbb=msbm10
\font\twelvebb=msbm10 scaled 1095
%\font\thirteenbb=msbm10 scaled 1315
%\font\fourteenbb=msbm10 scaled \magstep2
%%%%%%%%%%%%
\newfam\bbfam
\textfont\bbfam=\twelvebb \scriptfont\bbfam=\eightbb
                           \scriptscriptfont\bbfam=\sixbb
%\textfont\bbfam=\thirteenbb \scriptfont\bbfam=\eightbb
%                            \scriptscriptfont\bbfam=\sixbb

\newcommand{\tr}{{\rm tr \,}}
\newcommand{\linspan}{{\rm span\,}}
\newcommand{\rank}{{\rm rank\,}}
\newcommand{\diag}{{\rm Diag\,}}
\newcommand{\Image}{{\rm Im\,}}
\newcommand{\Ker}{{\rm Ker\,}}
\newcommand{\U}{\mathcal{U}}
\newcommand{\W}{\mathcal{W}}

\def\bb{\fam\bbfam\twelvebb}
\newcommand{\enp}{\begin{flushright} $\Box$ \end{flushright}}

\def\cB{{\mathcal{B}}}
\def\cD{{\mathcal{D}}}
\def\cU{{\mathcal{U}}}
\def\cV{{\mathcal{V}}}

\def\C{{\mathbb{C}}}
\def\D{{\mathbb{D}}}
\def\N{{\mathbb{N}}}
\def\Q{{\mathbb{Q}}}
\def\R{{\mathbb{R}}}
\def\Z{{\mathbb{Z}}}

\title{Order embeddings of real matrix domains\thanks{The author was supported by grants N1-0368, J1-60025, and P1-0288 from ARIS, Slovenia.}}
\author{Peter \v Semrl\footnote{Institute of Mathematics, Physics, and Mechanics, Jadranska 19, SI-1000 Ljubljana, Slovenia;
Faculty of Mathematics and Physics, University of Ljubljana,
        Jadranska 19, SI-1000 Ljubljana, Slovenia,  peter.semrl@fmf.uni-lj.si}
        }

\date{}
\maketitle

\begin{abstract}
Let $n$ be a positive integer, $n \not=1$, and $S_n$ the set of all $n \times n$ real symmetric matrices.
A nonempty subset $\U \subset S_n$ is called a matrix domain if it is open and connected and a map $\phi : \U \to S_n$ is said to be an order emebedding if for every pair $X,Y \in \U$ we have $X \le Y \iff \phi (X) \le \phi(Y)$. We describe the general form of such maps.
\end{abstract}
\maketitle

\bigskip
\noindent AMS classification: 15B48, 15B57.

\bigskip
\noindent
Keywords: Symmetric matrix; Loewner's order; Order embedding.

\section{Introduction}

Let $n \ge 2$ be an integer and $S_n$ the set of all $n \times n$ real symmetric matrices. As usual we will identify $\mathbb{R}^n$ with the set of all $n \times 1$ real matrices and linear operators on $\mathbb{R}^n$ with  $n \times n$ matrices. A matrix $A \in S_n$ is said to be positive, $A \ge 0$, if $\langle Ax,x \rangle \ge 0$ for every $x \in \mathbb{R}^n$. This is equivalent to the fact that all the eigenvalues of $A$ are nonnegative. For $A,B \in S_n$ we write $A \le B$ if $B -A$ is positive. The partial order $\le$ is usually called Loewner's order. If $A \ge 0$ and $A$ is invertible, then $A$ is called a positive definite matrix. In this case we write $A > 0$. This happens if and only if all the eigenvalues of $A$ are positive. And finally, we write $A > B$ whenever $A-B$ is positive definite.

Let $\mathcal{M} ,\mathcal{N}  \subset S_n$. A map $\phi : \mathcal{M} \to S_n$ is called an order emebedding of $\mathcal{M}$ if for every pair $X,Y \in \mathcal{M}$ we have
$$
X \le Y \iff \phi (X) \le \phi (Y).
$$
A bijective order emebedding $\phi : \mathcal{M} \to \mathcal{N}$ is called an order isomorphism of $\mathcal{M}$ onto $\mathcal{N}$.

A subset $\U \subset S_n$ is called a matrix domain if it is open and connected. When speaking of domains we will always assume that they are nonempty.
Our goal is to describe the general form of embeddings of matrix domains. When dealing with order embeddings $\phi : \U \to S_n$ there is no loss of generality in assuming that $0 \in \U$ and $\phi (0) = 0$. Indeed, let $C \in \U$. Then the map $X \mapsto \phi (X + C) - \phi (C)$, $X \in \U-C = \{ Y-C\, : \, Y \in \U \}$ is an order embedding of the domain $\U - C$ which maps the zero matrix to itself. 

In order to formulate the main theorem of the paper we need to introduce a certian class of matrix domains. For every $A \in S_n$ we set
$$
\mathcal{W}_A = \{ X \in S_n \, : \, XA + I \ \, {\rm is} \ \, {\rm invertible} \}
$$
and denote by $\U_A$ the connected component of $\mathcal{W}_A$ that contains $0$.
Clearly, $\mathcal{W}_A$ is an open subset of $S_n$ and $\U_A$ is a matrix domain. Let us consider two extreme examples. If $A= 0$, then $\mathcal{W}_A = \U_A = S_n$. In the case when $A = I$ we have 
$$
\mathcal{W}_A = \mathcal{W}_I = \cup_{k=0}^n \mathcal{Z}_k,
$$
where $\mathcal{Z}_k$ denotes the set of all $n \times n$ symmetric matrices with $k$ eigenvalues smaller than $-1$ and $n-k$ eigenavalues larger than $-1$. Here, we have counted eigenvalues with their geometric multiplicity. It is easy to see that each of the subsets $\mathcal{Z}_k$, $k=0, \ldots, n$, is open and $\mathcal{Z}_0$ is connected. Thus, $\U_I = \mathcal{Z}_0$.

\begin{proposition}\label{predmain}
Let $A \in S_n$. The map $\psi_A : \mathcal{W}_A \to S_n$ given by
$$
\psi_A (X) = (XA + I)^{-1} X , \ \ \ X \in \mathcal{W}_A ,
$$
has the following properties:
\begin{itemize}
\item It is a bijection of $\mathcal{W}_A$ onto $\mathcal{W}_{-A}$ and its inverse is $\psi_{-A}$.
\item Both $\psi_A$ and $\psi_{-A}$ are continuous.
\item It is an order isomorphism of $\U_A$ onto $\U_{-A}$.
\end{itemize}
\end{proposition}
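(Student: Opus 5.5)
We first check that $\psi_A$ actually takes values in $S_n$. For $X\in\mathcal{W}_A$ we have $X(AX+I) = (XA+I)X$. Since $AX+I = (XA+I)^T$ is also invertible, this gives
$$(XA+I)^{-1}X = X(AX+I)^{-1}.$$
Transposing the left side yields the right side, so $\psi_A(X)$ is symmetric.

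Next we verify the bijection claim. Put $Y=\psi_A(X)$. Then
$$-YA+I = I-(XA+I)^{-1}XA = (XA+I)^{-1}\bigl((XA+I)-XA\bigr) = (XA+I)^{-1},$$
which is invertible, so $Y\in\mathcal{W}_{-A}$. Moreover
$$\psi_{-A}(Y) = (-YA+I)^{-1}Y = (XA+I)(XA+I)^{-1}X = X.$$
By symmetry (replace $A$ with $-A$) we get $\psi_A\psi_{-A}=\mathrm{id}$ on $\mathcal{W}_{-A}$. Hence $\psi_A$ is a bijection of $\mathcal{W}_A$ onto $\mathcal{W}_{-A}$ with inverse $\psi_{-A}$. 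Continuity of both maps is immediate, because matrix inversion is continuous on invertible matrices. So $\psi_A$ is a homeomorphism of $\mathcal{W}_A$ onto $\mathcal{W}_{-A}$. It maps connected components onto connected components, and since $\psi_A(0)=0$, it maps $\U_A$ onto $\U_{-A}$.

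The main step is the order property on $\U_A$. For $X,Y\in\mathcal{W}_A$ we use the identity
$$\psi_A(Y)-\psi_A(X) = (XA+I)^{-1}\,(Y-X)\,(AY+I)^{-1}.$$
It is checked by multiplying on the left by $XA+I$ and on the right by $AY+I$, using $\psi_A(Y)=Y(AY+I)^{-1}$:
$$(XA+I)Y - X(AY+I) = XAY + Y - XAY - X = Y-X.$$
This is a congruence-type identity, but with two different matrices on the two sides, so positivity is not obviously preserved. The trick is to restrict to segments. Suppose $X\le Y$ in $\U_A$. If $\U_A$ were convex, then for $X_t = X+t(Y-X)$ we would get
$$\frac{d}{dt}\psi_A(X_t) = (X_tA+I)^{-1}(Y-X)(AX_t+I)^{-1} = M_t^T(Y-X)M_t, \qquad M_t=(AX_t+I)^{-1}.$$
This is positive, and integrating would give $\psi_A(X)\le\psi_A(Y)$.

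The obstacle is that $\U_A$ need not be convex. So we must show that $X,Y\in\U_A$ with $X\le Y$ can be joined inside $\U_A$ by a piecewise smooth path whose velocity is always positive semidefinite. Equivalently, we must show that the set $\{Z: X\le Z\le Y\}$, or at least the segment $[X,Y]$, stays in $\U_A$.

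We would try to prove that the segment stays in $\mathcal{W}_A$ as follows. Suppose $X_tA+I$ is singular for some $t$, and pick $v\ne0$ with $(X_tA+I)v=0$. Put $w=Av$. Then $w\neq 0$ and $\langle X_tw,w\rangle = -\langle v,w\rangle$. The function $f(t)=\langle X_tw,w\rangle$ is monotone in $t$, so we would analyse how the eigenvalues of $(I+X_tA)$ cross zero. A cleaner route is to write $\det(X_tA+I)$ in the form $\det(A)\det(X_t+A^{-1})$ when $A$ is invertible, and reduce to the spectrum of a symmetric pencil.

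If the segment argument fails, the fallback is different. We would use connectedness of $\U_A$ and the local version of the derivative argument to show that the set of pairs $(X,Y)$ with $X\le Y$ satisfying $\psi_A(X)\le\psi_A(Y)$ is open and closed among comparable pairs in $\U_A$. Locally the segment does stay in $\U_A$, since $\U_A$ is open, and this makes the open part straightforward. The converse direction, $\psi_A(X)\le\psi_A(Y)\Rightarrow X\le Y$, then follows by applying the same argument to $\psi_{-A}$ on $\U_{-A}$.
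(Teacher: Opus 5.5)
\textbf{There is a genuine gap: you never prove that the segment between comparable points of $\U_A$ stays in $\U_A$.}

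Your first two items are correct. The computations for the order part are also correct. The identity $\psi_A(Y)-\psi_A(X)=(XA+I)^{-1}(Y-X)(AY+I)^{-1}$ holds, and along a segment contained in $\mathcal{W}_A$ the derivative is $M_t^t(Y-X)M_t$. Completed, this would be a much shorter route than the paper's, which goes through rank preservation via the Grassmannian, lines $X+tP$ mapping to lines, Lemma~\ref{frusti}, Lemma~\ref{less}, the chain argument of Lemma~\ref{trivialn}, and a final sign-fixing example.

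The proposal stops exactly where the real content lies. You need that for $X\le Y$ in $\U_A$ the segment, or the interval $[X,Y]$, lies in $\U_A$, and you do not prove it. This is not a technicality, because the order property fails on $\mathcal{W}_A$. For $A=I$, the matrices $X=-2I\le Y=0$ both lie in $\mathcal{W}_I$, yet $\psi_I(X)=2I>0=\psi_I(Y)$. So some global information singling out the component $\U_A$ must enter.

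None of your suggested routes supplies it:
\begin{itemize}
\item In the sketch with $v$, $w=Av$ and $\langle X_tw,w\rangle$, the vector $w$ is tied to a single singular parameter $t$, so monotonicity in $t$ yields no contradiction.
\item The pencil remark is not carried out.
\item The fallback does not work as stated. Nothing makes the set of comparable pairs with $\psi_A(X)\le\psi_A(Y)$ open, since a local segment argument near $X$ or near $Y$ says nothing about pairs that are far apart. You would also need the set of comparable pairs in $\U_A$ to be connected, which is not obvious either.
\end{itemize}

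The missing idea is an inertia argument; this is the paper's Lemma~\ref{uzivee}.
\begin{itemize}
\item Write $A=S|A|$, where $S$ is a symmetric involution commuting with $|A|$, with $m$ eigenvalues equal to $1$ and $n-m$ equal to $-1$. Then $ZA+I$ is invertible if and only if the symmetric matrix $|A|^{1/2}Z|A|^{1/2}+S$ is invertible.
\item At $Z=0$ this matrix is $S$. Along a path in $\U_A$ from $0$ it stays invertible, so for every $X\in\U_A$ it has exactly $m$ positive and $n-m$ negative eigenvalues.
\item If $X,Y\in\U_A$ and $X\le Z\le Y$, then $|A|^{1/2}X|A|^{1/2}+S\le |A|^{1/2}Z|A|^{1/2}+S\le |A|^{1/2}Y|A|^{1/2}+S$. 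The left inequality gives the middle matrix at least $m$ positive eigenvalues, and the right one gives it at least $n-m$ negative eigenvalues. Hence it is invertible.
\end{itemize}
When $A$ is invertible, this is your pencil idea with $X_t+A^{-1}$, once you observe that membership in $\U_A$ forces $X+A^{-1}$ to have the inertia of $A^{-1}$.

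With this lemma in place, $\psi_A(Y)-\psi_A(X)=\int_0^1 M_t^t(Y-X)M_t\,dt\ge 0$ gives the forward implication. Applying the same argument to $-A$ on $\U_{-A}$ gives the converse.
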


\begin{theorem}\label{main}
Let $n \ge 2$ be an integer and $\U \subset S_n$ a matrix domain such that $0 \in \U$. Assume that $\phi  : \U \to S_n$ is an order emebedding satisfying $\phi (0) =0$. Then there exist $A \in S_n$ and an invertible real $n \times n$ matrix $T$ such that $\U \subset \U_A$ and
\begin{equation}\label{form}
\phi (X) = T (XA + I)^{-1} XT^t
\end{equation}
for every $X \in \U$. Here, $T^t$ denotes the transpose of $T$.
\end{theorem}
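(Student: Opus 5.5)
Our strategy is to reduce the theorem to the known structure of order isomorphisms between operator intervals, and then glue the local descriptions together using connectedness of $\U$. The key tool is the geometry of Loewner's order: for $X<Y$ the order interval $[X,Y]=\{Z : X\le Z\le Y\}$ is determined by order alone, as are the relations $X<Y$ (strict) and rank-one comparability.

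We will show that if $X\le Y$ and $Y-X$ has rank one, then $\phi(Y)-\phi(X)$ also has rank one. The reason is that rank-one differences are characterized order-theoretically: they are the pairs $X\le Y$ for which the interval $[X,Y]$ is a chain. Likewise $X<Y$ is characterized by $[X,Y]$ having nonempty interior. Since $\phi$ is an order embedding, we have $\phi([X,Y]\cap\U)=[\phi(X),\phi(Y)]\cap\phi(\U)$ only partially, so this characterization needs care. We therefore work locally: we take a small open order interval $(-\varepsilon I,\varepsilon I)\subset\U$ around $0$.

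The main step, and the one we expect to be the main obstacle, is to prove that $\phi$ maps this small interval onto an order interval, or at least onto an open set. This surjectivity is essential. Our first attempt will use an invariance-of-domain type argument, which requires continuity. We obtain continuity as follows. For $X$ near $0$ we have $\phi(X-\delta I)\le\phi(X)\le\phi(X+\delta I)$, and we must show that $\phi(X+\delta I)-\phi(X-\delta I)\to 0$. For this we will exploit the lines $t\mapsto X+tP$, where $P$ has rank one. Along such a line $\phi$ is a monotone map into a chain of matrices with rank-one differences, and those matrices lie on a line as well. We will then argue that monotone injective maps on chains, together with the no-gaps property coming from comparability with many other matrices, force continuity.

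Once $\phi$ restricted to $(-\varepsilon I,\varepsilon I)$ is a homeomorphic order isomorphism onto an open set that preserves rank-one comparability, we will invoke the analogue of Hua's fundamental theorem of the geometry of symmetric matrices, using adjacency preservation in both directions (local version, for $n\ge 2$). This yields a locally fractional-linear form $\phi(X)=T(XA+I)^{-1}XT^t+ $ const. The normalization $\phi(0)=0$ kills the constant, and the order-preservation fixes the sign so that a congruence by $T$ appears rather than $-T$. Proposition \ref{predmain} shows that this local formula is itself an order isomorphism, which is consistent with the claimed form.

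To pass from local to global, we consider the set of $Y\in\U$ near which $\phi$ agrees with the formula (\ref{form}) for the fixed pair $A,T$. This set is open by construction. To show it is closed in $\U$, we compose $\phi$ with the inverse $\psi_{-A}$ from Proposition \ref{predmain} and with congruence by $T^{-1}$. The composed map is an order embedding that equals the identity on an open set. Applying the local result at a limit point $Y$ gives a fractional-linear map near $Y$ that coincides with the identity on an open subset, and real-analyticity forces it to be the identity near $Y$. By connectedness, the formula holds on all of $\U$. Moreover, the map $X\mapsto (XA+I)^{-1}X$ must stay defined along paths from $0$, since $\phi$ is defined everywhere and finite there; hence $XA+I$ remains invertible on $\U$ and $\U\subset\U_A$.
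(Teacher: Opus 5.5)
Your outline has two genuine gaps, and both sit where the real difficulty of the theorem lies. First, the continuity step is circular and, even once unwound, lacks its key mechanism. You use that $\phi$ maps a line $t\mapsto X+tP$ into \emph{a chain of matrices with rank-one differences} lying on a line. As you yourself observed, this does not follow from $\phi$ being an order embedding: $\phi([X,Y])$ is a chain, but it is only a subset of $[\phi(X),\phi(Y)]$, and a chain such as $\{tI\}$ need not have rank-one differences. You planned to get rank-one preservation only after knowing that $\phi$ is open, which you derive from continuity by invariance of domain. So the argument goes in a circle.

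Even if each line were mapped into a line $\phi(X)+f(t)Q$, a strictly monotone $f$ can still jump, and you never formulate the \emph{no-gaps property}. The case $n=1$, where every strictly increasing discontinuous function is an order embedding, shows that continuity must come from a genuinely matrix-theoretic argument, and that argument is missing. In the paper continuity is an output rather than an input. Only continuity at all but countably many points $X_0+tI$ is cheap (Lemma \ref{automcont}, via monotonicity of the trace). Continuity everywhere (Proposition \ref{aqaq}) is deduced from the structure theorem for order embeddings of $[0,I]$ that are continuous at $0$ and $I$ (Corollary \ref{crev}). Its proof is the bulk of Section 2: rank preservation in Lemma \ref{nicce}, the formula for $\alpha(A,P;B)$, preservation of orthogonality of rank-one projections, Faure's theorem \cite{Fa} for $n\ge 3$, and a separate argument reducing to \cite{Sem} for $n=2$.

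Second, you invoke a \emph{local version of Hua's fundamental theorem} as a black box, but it is not a result you can simply cite. Hua's and Chow's theorems concern bijections of all of $S_n$, respectively of the whole Lagrangian Grassmannian. You need a statement about homeomorphisms between open subsets that preserve adjacency only for nearby pairs. That is a rigidity theorem of essentially the same depth as Theorem \ref{main}, and it is exactly where $n\ge 2$ must be used, since for $n=1$ every homeomorphism preserves adjacency. Without a proof, or a reference with matching hypotheses, this step carries all the content.

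Your local-to-global argument, by contrast, is essentially sound and parallels the paper's identity theorem (Theorem \ref{mmjm}) combined with the blow-up in Corollary \ref{kdjeno}. One caution: composing with $\psi_{-A}$ presupposes $T^{-1}\phi(X)(T^t)^{-1}\in\U_{-A}$ near the limit point $Y$, which is part of what must be shown. It is cleaner to compare the local formula at $Y$ with $T(XA+I)^{-1}XT^t$ directly. Two such rational maps that agree on an open set agree identically, and boundedness of $\phi$ near $Y$ together with Lemma \ref{diverge} forces $YA+I$ to be invertible.
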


We have the following straightforward consequence of the above two statements.

\begin{corollary}\label{konzul}
Let $n \ge 2$ be an integer and $\U \subset S_n$ a matrix domain. Assume that $\phi  : \U \to S_n$ is an order emebedding. Then $\phi (\U)$ is a matrix domain and $\phi : \U \to \phi (\U)$ is a homeomorphism and order isomorphism.
\end{corollary}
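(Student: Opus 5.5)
The plan is to reduce to the normalized situation of Theorem~\ref{main} and then read off the conclusions from Proposition~\ref{predmain}. Fix $C \in \U$ and set $\U' = \U - C$ and $\phi'(X) = \phi(X+C) - \phi(C)$ for $X \in \U'$. As noted in the introduction, $\U'$ is a matrix domain containing $0$, and $\phi'$ is an order embedding with $\phi'(0)=0$. Theorem~\ref{main} then gives $A \in S_n$ and an invertible $T$ with $\U' \subset \U_A$ and
\[
\phi'(X) = T\psi_A(X)T^t \quad \text{for all } X \in \U' .
\]
So $\phi = \tau \circ \kappa \circ \psi_A \circ \sigma$ on $\U$, where $\sigma(X) = X - C$, $\kappa(Y) = TYT^t$ and $\tau(Z) = Z + \phi(C)$.

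Next I check that each factor behaves well.
\begin{itemize}
\item The translations $\sigma$ and $\tau$ are homeomorphisms of $S_n$ and order isomorphisms of $S_n$.
\item The congruence $\kappa$ is a linear bijection of $S_n$ with inverse $Y \mapsto T^{-1}Y(T^{-1})^t$. It is therefore a homeomorphism, and it preserves $\le$ in both directions because $Y \ge 0 \iff TYT^t \ge 0$.
\item By Proposition~\ref{predmain}, $\psi_A$ is a bijection of $\mathcal{W}_A$ onto $\mathcal{W}_{-A}$ with continuous inverse $\psi_{-A}$, so it is a homeomorphism between these open sets.
\end{itemize}

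Since $\U' \subset \U_A \subset \mathcal{W}_A$, the restriction of $\psi_A$ to $\U'$ is a homeomorphism onto $\psi_A(\U')$. This image is open because $\psi_A$ is a homeomorphism between open subsets of $S_n$, so it maps open sets to open sets. It is connected as a continuous image of a connected set. Hence $\psi_A(\U')$ is a matrix domain. Applying $\kappa$ and $\tau$ keeps the image open and connected, so $\phi(\U)$ is a matrix domain and $\phi : \U \to \phi(\U)$ is a homeomorphism.

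Finally, $\phi$ is injective: if $\phi(X) = \phi(Y)$, then $\phi(X) \le \phi(Y)$ and $\phi(Y) \le \phi(X)$, so the embedding property gives $X \le Y \le X$, hence $X=Y$. Thus $\phi$ is a bijection onto $\phi(\U)$, and since it is an order embedding it is an order isomorphism onto its image. This part does not even need Proposition~\ref{predmain}'s order statement.

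The only point needing care is openness of the image. It is handled by using that the inverse $\psi_{-A}$ is continuous on the open set $\mathcal{W}_{-A}$, rather than by invariance of domain.
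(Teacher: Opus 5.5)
Your proof is correct and follows the paper's route. The paper gives no separate argument and only calls the corollary a straightforward consequence of Theorem \ref{main} and Proposition \ref{predmain}; you supply exactly those details by translating, writing $\phi$ as translations composed with a congruence and $\psi_A$, and using that $\psi_A$ is a homeomorphism of the open set $\mathcal{W}_A$ onto the open set $\mathcal{W}_{-A}$ to get an open image, while the order-isomorphism claim follows from injectivity of an order embedding, as you note.
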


In the complex case the order embeddings $\phi : \U \to H_n$, where $H_n$ stands for the set of all $n \times n$ complex hermitian matrices and $\U \subset H_n$ is open and connected, were studied in \cite{MoS}. One of the main tools in \cite{MoS}
was infinite-dimensional holomorphy. Thus, the approach developed in \cite{MoS} does not work in the real case and it is not surprising that the main ideas in our paper are essentially different from those in \cite{MoS}. For example, a short and simple proof of the first item of Proposition \ref{predmain} is the same as in the complex case, the second item is trivial, while the proof of the only nontrivial item, whose complex analogue was based on the infinite-dimensional holomorphy, depends on a geometric idea that is inspired by  geometry of algebraic homogeneous spaces \cite{Cho} and  geometry of matrices \cite{Hu1, Hu2, Hu3, Hu4, Hu5, Hu6, Hu7, Hu8}. Let us just mention that some of our ideas are  inspired also by proof techniques in \cite{MoS2, Sem}. A more precise explanation will be given in the sequel. 

The next section will be devoted to preliminary results. The main theorem will be proved in the third section. The paper closes with final remarks. We will describe the general form of order embeddings of open matrix intervals, explain the optimality of our main result, give some further insights into obtained theorems that will reveal connections with other results such as the fundamental theorem of chronogeometry and finally discuss order embeddings of the closed matrix intervals showing in particular that they must be continuous everywhere but at the end points.

\section{Preliminary results}

For any $A,B \in S_n$ with $A \le B$ we define matrix intervals $(A,B)$ and $[A,B]$ by $$(A,B) = \{ X \in S_n \, : \, A < X < B \}$$ and
$$[A,B] = \{ X \in S_n \, : \, A \le X \le B \}.$$
Of course, $(A,B)$ is a nonempty set if and only if $A < B$.

Let us recall that for an arbitrary $A \in S_n$ and any positive real number $\varepsilon$ we have $D(A, \varepsilon) = \{ C \in S_n \, : \, \| C-A \| < \varepsilon \} = (A- \varepsilon I , A + \varepsilon I)$. Here, $\| \cdot \|$ stands for the usual operator norm.

The complex analogue of the lemma below was proved in \cite[Lemma 6.2]{MoS}. The same proof works also in the real case. As it is rather short we will repeat it here for the sake of completeness.

\begin{lemma}\label{automcont}
Let $\phi : (0,I) \to S_n$ be an order embedding. Let $\mathcal{C} \subset (0,1)$ be the set of all real numbers $t$, $0 < t < 1$, such that the map $\phi$ is continuous at the matrix $tI$. Then $(0,1) \setminus \mathcal{C}$ is at most countable.
\end{lemma}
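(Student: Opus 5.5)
We reduce the statement to the classical fact that a monotone real function has at most countably many discontinuities. The key point is that along the chain $\{tI : 0<t<1\}$ the map $\phi$ is monotone. To get a scalar function we compose with a strictly monotone functional, namely the trace. Define $f:(0,1)\to\mathbb{R}$ by
$$
f(t) = \tr \phi(tI).
$$
If $s<t$ then $sI \le tI$, hence $\phi(sI)\le\phi(tI)$. Thus $f$ is nondecreasing, and so it has at most countably many points of discontinuity. We will show that continuity of $f$ at $t$ forces continuity of $\phi$ at $tI$. This gives $(0,1)\setminus\mathcal{C}$ at most countable.

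Fix $t\in(0,1)$ at which $f$ is continuous, and let $\varepsilon>0$ be given. Choose $\delta>0$ with $[t-\delta,t+\delta]\subset(0,1)$ and $f(t+\delta)-f(t-\delta)<\varepsilon$. If $\|X-tI\|<\delta$, then
$$
(t-\delta)I < X < (t+\delta)I .
$$
Hence both $X$ and $tI$ lie in the matrix interval $[(t-\delta)I,(t+\delta)I]$, and order preservation gives
$$
\phi((t-\delta)I)\le \phi(X)\le \phi((t+\delta)I), \qquad \phi((t-\delta)I)\le \phi(tI)\le \phi((t+\delta)I).
$$

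It remains to turn this sandwich into a norm bound. Let $P=\phi((t-\delta)I)$ and $Q=\phi((t+\delta)I)$, so $Q-P\ge 0$ and $\tr(Q-P)<\varepsilon$. For a positive matrix the operator norm is at most the trace, so $\|Q-P\|<\varepsilon$.

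Both $\phi(X)-P$ and $\phi(tI)-P$ lie in $[0,Q-P]$. If $0\le Y,Z\le R$, then
$$
-R\le Y-Z\le R,
$$
which gives $\|Y-Z\|\le\|R\|$. Applying this with $Y=\phi(X)-P$, $Z=\phi(tI)-P$ and $R=Q-P$ yields $\|\phi(X)-\phi(tI)\|\le\|Q-P\|<\varepsilon$. Hence $\phi$ is continuous at $tI$.

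No step is a real obstacle. The only point needing care is the last norm estimate, which uses two facts: $-R\le H\le R$ implies $\|H\|\le\|R\|$ (test $\langle Hx,x\rangle$ on unit vectors), and $\|R\|\le\tr R$ for $R\ge0$. Only the forward implication $X\le Y\Rightarrow\phi(X)\le\phi(Y)$ is used, so the argument works for any order-preserving map.
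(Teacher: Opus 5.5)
Your proposal is correct and follows essentially the same route as the paper. The paper also uses the monotone function $f(t)=\tr \phi(tI)$ and sandwiches $\phi(X)$ and $\phi(tI)$ between $\phi((t-\delta)I)$ and $\phi((t+\delta)I)$; it then uses $\|C\|\le \tr C$ for $C\ge 0$ and the estimate $\|Y-Z\|\le\|R\|$ for $0\le Y,Z\le R$ to obtain continuity of $\phi$ at $tI$ whenever $f$ is continuous at $t$.
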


\begin{proof}
Define a real function $f : (0,1) \to \mathbb{R}$ by
$$
f(t) = {\rm tr}\, \phi (tI), \ \ \ t \in (0,1).
$$
Here, ${\rm tr}\, A$ denotes the trace of $A$. Clearly, if $A,B \in S_n$ and $A \le B$ then ${\rm tr}\, A \le {\rm tr}\, B$. It follows that $f$ is an increasing function. Denote by  $\mathcal{C} \subset (0,1)$  the set of all real numbers $t$, $0 < t < 1$, such that the function $f$ is continuous at $t$. Then  $(0,1) \setminus \mathcal{C}$ is at most countable. 

Let $t_0 \in  \mathcal{C}$. We need to show that the map $\phi$ is continous at the matrix $t_0 I$. Choose any positive real number $\varepsilon$. Then we can find $\delta >0$ such that 
$t_0 + \delta < 1$, $t_0 - \delta > 0$, and
for every real $t$ we have
$| t - t_0 | \le \delta \Rightarrow | f(t) - f(t_0) | \le \varepsilon$. It follows that
$$
{\rm tr}\, (\phi ((t_0 + \delta) I) - \phi ((t_0 - \delta) I) ) $$ $$= {\rm tr}\, (\phi ((t_0 + \delta) I) - \phi (t_0 I) ) + {\rm tr}\, (\phi (t_0  I) - \phi ((t_0 - \delta) I) ) \le 2 \varepsilon.
$$
For every positive matrix $C$ we have $\| C \| \le {\rm tr}\, C$. Hence,
$$
\phi ((t_0 + \delta) I) \ge \phi ((t_0 - \delta) I)  
$$
yields
$$
\| \phi ((t_0 + \delta) I) - \phi ((t_0 - \delta) I) \|  \le 2 \varepsilon.
$$
It is well-known that for any $A,B,C,D \in S_n$ satisfying
$$
A \le C,D \le B \ \ \ {\rm and} \ \ \ \|B-A \| \le  2 \varepsilon
$$
we have $\| C-D \| \le 2\varepsilon$. For the sake of completeness we will verify this statement. We have $C - D \le B- D \le B-A$ and therefore, for every unit vector $x$ we have
$$
\langle (C-D) x,x \rangle \le \langle (B-A) x,x \rangle \le 2 \varepsilon.
$$
Similarly, we have $\langle (D-C) x,x \rangle \le  2 \varepsilon$, and consequently,  $| \langle (C-D) x,x \rangle | \le 2 \varepsilon$. This implies that $\max_{ \| x \| = 1} | \langle (C-D) x,x \rangle | =  \| C - D \| \le 2 \varepsilon$, as desired. 

Next, it is clear that if $A \in (0,I)$ and $\| A - t_0 I \| \le \delta$ then $(t_0 - \delta) I \le A \le (t_0 + \delta) I$. From here we conclude that
$$
\phi ((t_0 - \delta) I) \le \phi (t_0 I) , \phi (A) \le \phi ((t_0 + \delta) I). 
$$
Hence, we see that $A \in (0,I)$ and $\| A - t_0 I \| \le \delta$ yield $\| \phi (A) - \phi ( t_0 I) \| \le 2\varepsilon$. Thus, $\phi$ is continuous at $t_0 I$.
\end{proof}

We will need the following statement several times. The proof is trivial and will be omitted.

\begin{lemma}\label{trvtrv}
Let $A,B \in S_n$ be nonzero and $0 \le A \le B$. Set $p = {\rm rank}\, A$ and $q = {\rm rank}\, B$. Then 
the image of $A$ is contained in the image of $B$, and consequently, $p \le q$.
\end{lemma}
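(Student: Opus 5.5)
The argument is elementary and works at the level of vectors. First I will show that the kernel of $B$ is contained in the kernel of $A$. Then I will pass to orthogonal complements, using the symmetry of both matrices.

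Take $x \in \Ker B$. Since $0 \le A \le B$, we get
$$0 \le \langle Ax,x\rangle \le \langle Bx,x\rangle = 0,$$
and therefore $\langle Ax,x\rangle = 0$. Because $A \ge 0$, this forces $Ax = 0$. To see this, factor $A = A^{1/2}A^{1/2}$ with $A^{1/2}$ symmetric and positive; then
$$\langle Ax,x\rangle = \|A^{1/2}x\|^2,$$
so $A^{1/2}x = 0$ and hence $Ax = 0$. Alternatively, one can use the Cauchy–Schwarz inequality for the positive semidefinite form $(u,v)\mapsto\langle Au,v\rangle$, which gives $|\langle Ax,y\rangle|^2 \le \langle Ax,x\rangle\langle Ay,y\rangle = 0$ for every $y$. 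Either way, $\Ker B \subset \Ker A$.

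For a symmetric matrix $C$ we have $\Image C = (\Ker C^t)^\perp = (\Ker C)^\perp$. Applying this to both matrices and taking orthogonal complements in $\Ker B \subset \Ker A$ yields
$$\Image A = (\Ker A)^\perp \subset (\Ker B)^\perp = \Image B.$$
Taking dimensions then gives $p = \rank A \le \rank B = q$.

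The only step that needs any thought is the implication $\langle Ax,x\rangle = 0 \Rightarrow Ax = 0$ for positive $A$, and the square-root argument above settles it. The assumption that $A$ and $B$ are nonzero plays no role in the proof; it only ensures that $p,q \ge 1$.
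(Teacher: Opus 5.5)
Your proof is correct and complete. The paper omits this proof as trivial, and your argument (showing $\ker B \subset \ker A$ via $\langle Ax,x\rangle = 0 \Rightarrow Ax = 0$ for $A \ge 0$, then taking orthogonal complements using symmetry) is the standard one that omission refers to.
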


We denote by $P_{n}^1$ the set of all $n\times n$ projections of rank one.
Let $A,B \in S_n$ with $A \le B$. For every $P \in P_{n}^1$ we set
$$
\alpha (A,P;B) = \max \{ t \in \mathbb{R} \, : \, A + tP \le B\}
$$  
(note that the set $ \{ t \in \mathbb{R} \, : \, A + tP \le B\}$ is nonempty as it contains zero and is bounded above by $\| B -A \|$).
When identifying $n \times n$ matrices with linear operators on $\mathbb{R}^n$ we denote the image of $A$ by ${\rm Im}\, A$ and the kernel of $A$ by ${\rm Ker}\, A$. For $A \in S_n$ the restriction of $A$ to ${\rm Im}\, A$ is a bijective operator from ${\rm Im}\, A$ onto istelf. We denote by $A^\dagger : \mathbb{R}^n \to \mathbb{R}^n$ the Moore-Penrose inverse of $A$. It is defined in the following way. We know that $\mathbb{R}^n$ is the orthogonal direct sum of the image of $A$ and the kernel of $A$. Thus, any vector $x \in \mathbb{R}^n$ can be written as $x = x_1 + x_2$ for uniquely determined vectors $x_1 \in {\rm Im}\, A$ and $x_2 \in {\rm Ker}\, A$. Then
$$
A^\dagger x = \left( A_{| {\rm Im}\, A} \right)^{-1}  x_1 .
$$

 Lep $P$ be a rank one projection and $x$ a unit vector that spans ${\rm Im}\, P$. It is trivial to see that $P = xx^t$ (if we consider $P$ as a matrix, then the vector $x$ is represented by the $n \times 1$ matrix, and if $P$ is considered as a linear operator then $x^t$ denotes the linear functional on $\mathbb{R}^n$ given by $x^t z = \langle z,x \rangle$, $z \in  \mathbb{R}^n$).
The next lemma is just a slight modification of \cite[Theorem 4]{BG}.

\begin{lemma}\label{sformula}
Let $A,B \in S_n$ and $P = xx^t \in P_{n}^1$. Assume that $A \le B$. 
\begin{itemize}
\item If $x \in {\rm Im}\, (B-A)$ then 
\begin{equation}\label{uzuz}
\alpha (A,P;B) = { 1 \over {\rm tr}\, \left(    (B-A)^\dagger P \right) }=  { 1 \over   x^t (B-A)^\dagger x }.
\end{equation}
\item If $x \not\in {\rm Im}\, (B-A)$ then 
$$
\alpha (A,P;B) = 0.
$$
\end{itemize}
\end{lemma}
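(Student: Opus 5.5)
Set $C = B - A \ge 0$. The condition $A + tP \le B$ is then the same as $tP \le C$, i.e. $t\, xx^t \le C$. So $\alpha(A,P;B)$ is the largest real $t$ with $t\,xx^t \le C$, and the problem reduces to the positive matrix $C$ and the vector $x$. For $t \le 0$ the inequality holds trivially, since $C \ge 0$; the whole question is which positive $t$ are allowed.

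\emph{Case $x \notin {\rm Im}\, C$.} Suppose $t\,xx^t \le C$ for some $t>0$. Lemma \ref{trvtrv} applied to $0 \le t\,xx^t \le C$ gives ${\rm Im}\,(xx^t) \subset {\rm Im}\, C$, hence $x \in {\rm Im}\, C$, which is a contradiction. So only $t \le 0$ is admissible, and $\alpha(A,P;B) = 0$.

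\emph{Case $x \in {\rm Im}\, C$.} Use the orthogonal decomposition $\mathbb{R}^n = {\rm Im}\, C \oplus {\rm Ker}\, C$. Both $C$ and $xx^t$ vanish on ${\rm Ker}\, C$ and map into ${\rm Im}\, C$. Therefore $t\,xx^t \le C$ is equivalent to the same inequality for the restrictions to ${\rm Im}\, C$. There $C$ is positive definite; write $C_0 = C_{|{\rm Im}\, C}$, so that $C_0^{-1}$ is the restriction of $C^\dagger$.

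Congruence by $C_0^{-1/2}$ (the restriction of $(C^\dagger)^{1/2}$) turns $t\,xx^t \le C_0$ into $t\,yy^t \le I$ on ${\rm Im}\, C$, where $y = C_0^{-1/2}x$. Since $yy^t$ is positive of rank one with single nonzero eigenvalue $\|y\|^2$, this holds exactly when $t\|y\|^2 \le 1$. Now
$$
\|y\|^2 = \langle C_0^{-1}x, x\rangle = x^t C^\dagger x .
$$
This quantity is positive, because $x \ne 0$ lies in ${\rm Im}\, C$ and $C^\dagger$ is positive definite there. Hence $\alpha(A,P;B) = 1/(x^t C^\dagger x)$.

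Finally, $x^t C^\dagger x = {\rm tr}\,(C^\dagger xx^t) = {\rm tr}\,(C^\dagger P)$ by cyclicity of the trace, which gives (\ref{uzuz}).

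The only step needing care is the reduction to ${\rm Im}\, C$. It is justified because on ${\rm Im}\, C \oplus {\rm Ker}\, C$ both matrices are block diagonal, with zero block on the kernel. Beyond that, no step is a genuine obstacle.
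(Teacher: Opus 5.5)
Your proof is correct and follows essentially the same route as the paper. For $x \notin {\rm Im}\,(B-A)$ you use the image-containment argument of Lemma \ref{trvtrv}; for $x \in {\rm Im}\,(B-A)$ you reduce to the block on ${\rm Im}\,(B-A)$, apply congruence by the inverse square root, and use that a positive rank-one matrix is $\le I$ exactly when its single nonzero eigenvalue (equivalently its trace) is at most $1$.
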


\begin{proof}
Assume first that the unit vector $x$ does not belong to the image of $B-A$. Then for every positive real number $t$ the image of $tP$ is not contained in the image of $B-A$, and therefore, $tP \not\le B-A$. It follows that $\alpha (A,P;B) = 0$. 

If, on the other hand, $x \in {\rm Im}\, (B-A)$, then with respect to the orthogonal direct sum decomposition $\mathbb{R}^n = {\rm Im}\, (B-A) \oplus  {\rm Ker}\, (B-A)$ the operators $P, B-A, (B-A)^\dagger$ have  matrix representations
$$
P = \left[ \begin{matrix} P_1 & 0 \cr 0 & 0 \cr \end{matrix} \right ] \, ,  \ \  B-A = \left[ \begin{matrix} (B-A)_1 & 0 \cr 0 & 0 \cr \end{matrix} \right ] \,  \ \ \ {\rm and} \ \ \ (B-A)^\dagger = \left[ \begin{matrix} (B-A)_{1}^{-1} & 0 \cr 0 & 0 \cr \end{matrix} \right ].
$$
We can restrict our attention to the upper left corners of these block matrices. In other words, we can assume that $B-A$ is invertible and $(B-A)^{-1} = (B-A)^\dagger$. 

The inequality
$$
(B-A) - tP \ge 0
$$
is equivalent to
$$
I - t(B-A)^{-1/2} P (B-A)^{-1/2} \ge 0.
$$
Because $t(B-A)^{-1/2} P (B-A)^{-1/2}$ is a rank one symmetric matrix whenever $t>0$, we have $t(B-A)^{-1/2} P (B-A)^{-1/2} \le I$ if and only if $${\rm tr}\, (t(B-A)^{-1/2} P (B-A)^{-1/2}) \le 1,$$ or equivalently,
$$
t \, {\rm tr}\, ((B-A)^{-1} P) \le 1.
$$
This yields (\ref{uzuz}).
\end{proof}

\begin{corollary}\label{picnar}
Assume that $A,B \in S_n$ satisfy $A < B$. Let $P\in P_{n}^1$ be any rank one projection and $t$ any real number.
Then $B - (A+tP)$ is invertible if $t \not= \alpha (A,P; B)$ and ${\rm rank}\, (B - (A+tP)) = n-1$  when $t = \alpha (A,P; B)$.
\end{corollary}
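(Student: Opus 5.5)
Since $A<B$, the matrix $D=B-A$ is positive definite. Hence $B-(A+tP)=D-tP$, and the question reduces to: for which real $t$ is $D-tP$ singular, and what is its rank there? By Lemma \ref{sformula}, $D$ is invertible, so $x\in{\rm Im}\,D=\mathbb{R}^n$ and $\alpha(A,P;B)=1/(x^tD^{-1}x)$. This number is finite and positive because $D^{-1}>0$.

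The plan is to write
$$
D-tP=D^{1/2}\bigl(I-tD^{-1/2}xx^tD^{-1/2}\bigr)D^{1/2}.
$$
Then $D-tP$ is invertible exactly when $I-tyy^t$ is invertible, where $y=D^{-1/2}x\neq 0$. The matrix $tyy^t$ has rank at most one. Its only possibly nonzero eigenvalue is $t\|y\|^2=t\,x^tD^{-1}x$, with eigenvector $y$, and it acts as $0$ on $y^\perp$. Consequently $I-tyy^t$ has eigenvalue $1$ with multiplicity $n-1$ on $y^\perp$ and eigenvalue $1-t\,x^tD^{-1}x$ on $\linspan\{y\}$.

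So $I-tyy^t$ is singular if and only if $t\,x^tD^{-1}x=1$, that is, $t=\alpha(A,P;B)$. In that case its kernel is exactly $\linspan\{y\}$, so its rank is $n-1$. Multiplying on both sides by the invertible matrix $D^{1/2}$ preserves rank, which gives both assertions of the corollary.

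No step is a real obstacle; the only point needing care is noting that $D>0$ makes $\alpha$ finite and forces $y\ne0$. It is also worth remarking that for $t\le 0$ the matrix $D-tP\ge D>0$ is invertible, which is consistent with $\alpha>0$.
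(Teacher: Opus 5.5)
Your proof is correct and follows essentially the paper's argument. Both reduce by congruence with $(B-A)^{-1/2}$ to $I - t(B-A)^{-1/2}P(B-A)^{-1/2}$, and then combine a rank-one analysis with Lemma \ref{sformula} to show the only singular value of $t$ is $\alpha(A,P;B)$, where the rank is $n-1$: you use the eigenvalue $t\,x^t(B-A)^{-1}x$, while the paper uses the criterion that a rank-one symmetric matrix is a projection iff its trace is $1$. Your eigenvalue phrasing also handles $t=0$ cleanly, a case the paper's rank-one wording passes over.
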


\begin{proof}
The rank of $B - (A+tP)$ is the same as the rank of $I - t(B-A)^{-1/2} P (B-A)^{-1/2}$. This matrix is invertible for all real $t$ but for $t$ for which the rank one matrix $t(B-A)^{-1/2} P (B-A)^{-1/2}$ is a rank one projection in which case this matrix is of rank $n-1$. Now, $t(B-A)^{-1/2} P (B-A)^{-1/2}$ is a rank one projection if and only if its trace is equal to $1$. By the above proof this happens exactly when $t = \alpha (A,P; B)$. 
\end{proof}

\begin{corollary}\label{trvdva}
Assume that $A,B \in S_n$ satisfy $0 < A < B$. Let $P\in P_{n}^1$.
Then
\begin{equation}\label{kkjjhh}
\alpha (0, P; A)  < \alpha (0, P; B).
\end{equation}
\end{corollary}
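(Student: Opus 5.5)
Since $0 < A < B$, both $A$ and $B$ are invertible, so $0 < A \le B$ gives ${\rm Im}\,(A - 0) = {\rm Im}\,(B-0) = \mathbb{R}^n$. Let $x$ be a unit vector spanning ${\rm Im}\,P$. Then the first item of Lemma \ref{sformula} applies to both pairs $(0,A)$ and $(0,B)$, and it gives
$$
\alpha(0,P;A) = \frac{1}{x^t A^{-1} x}, \qquad \alpha(0,P;B) = \frac{1}{x^t B^{-1} x}.
$$
So (\ref{kkjjhh}) is equivalent to the strict inequality $x^t B^{-1} x < x^t A^{-1} x$. Both quantities are positive, because $A^{-1}, B^{-1} > 0$ and $x \neq 0$.

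It therefore suffices to show that $0 < A < B$ implies $B^{-1} < A^{-1}$, that is, strict operator monotonicity of inversion. Then $\langle (A^{-1}-B^{-1})x,x\rangle > 0$ for the unit vector $x$, and we are done. To prove it, conjugate by $A^{-1/2}$. The condition $A < B$ is equivalent to $I < C := A^{-1/2} B A^{-1/2}$, so every eigenvalue of the positive definite matrix $C$ is strictly greater than $1$. Hence every eigenvalue of $C^{-1}$ lies in $(0,1)$, which means $C^{-1} < I$. Since $C^{-1} = A^{1/2} B^{-1} A^{1/2}$, conjugating back by $A^{-1/2}$ gives $B^{-1} < A^{-1}$. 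Conjugation by the invertible matrix $A^{-1/2}$ preserves the strict order, because it preserves positive definiteness.

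Alternatively, one can avoid inverses. Since $B - A > 0$, there is $\varepsilon > 0$ with $A + \varepsilon I \le B$. Then
$$
\alpha(0,P;B) \ge \alpha(0,P;A+\varepsilon I) \ge \alpha(0,P;A) + \varepsilon.
$$
The first inequality follows directly from the definition. For the second, if $tP \le A$ then $(t+\varepsilon)P \le A + \varepsilon P \le A + \varepsilon I$. This is a cleaner route, and I would actually present this one.

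There is no serious obstacle. The only point needing care is strictness, and the $\varepsilon$-argument settles it without any computation.
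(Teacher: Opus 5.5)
Your proposal is correct, and your preferred argument differs from the paper's. Your first route is the paper's proof. The paper simply cites the well-known implication $0<A<B \Rightarrow 0<B^{-1}<A^{-1}$ together with the formula of Lemma \ref{sformula}, and your conjugation by $A^{-1/2}$ supplies a proof of that well-known fact.

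The $\varepsilon$-argument you say you would present works directly from the definition of $\alpha$ as a maximum. It uses only that $P \le I$ and that the set $\{t : tP \le B\}$ grows with $B$, so it needs neither Lemma \ref{sformula} nor operator monotonicity of inversion. It also gives slightly more. Taking $\varepsilon$ to be the smallest eigenvalue of $B-A$, you get $\alpha(0,P;B) \ge \alpha(0,P;A)+\varepsilon$ uniformly in $P$. The argument also holds verbatim under the weaker hypothesis $0 \le A < B$, where the formula-based route would need extra case distinctions because $A$ need not be invertible.

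The paper's version is a one-line consequence of a formula it has already established. It also keeps the argument within the formula-based framework used in Lemmas \ref{jztzavr} and \ref{jztzb}. In the later applications (Lemmas \ref{kdajboze} and \ref{kdajbize}) all matrices involved are positive definite, so either proof serves.
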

Note that (\ref{kkjjhh}) yields that  there exists a positive real number $s$ such that $\alpha (0, P; A) < s < \alpha (0, P; B)$ which further implies that
$$
sP \not\le A \ \ \ {\rm and} \ \ \ sP \le B.
$$

\begin{proof}
It is well-known that $0 < A < B$ yields $0 < B^{-1} < A^{-1}$ and therefore, Lemma \ref{sformula} implies
$$
\alpha (0, P; A) < \alpha (0, P; B).
$$
This completes the proof.
\end{proof}

\begin{corollary}\label{trvtriv}
Let $t,p,q$ be real numbers such that $0 < q < t < p $. Then there exist rank one projections $P,Q \in S_2$ such that
$$
t = \alpha ( 0, Q ; pP + q(I-P)) .
$$
\end{corollary}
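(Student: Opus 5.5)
We apply Lemma \ref{sformula} in dimension $2$ with $A=0$ and $B = pP + q(I-P)$. Since $0<q<p$, $B$ is positive definite, so $B^\dagger = B^{-1} = p^{-1}P + q^{-1}(I-P)$. Every unit vector lies in the image of $B$, so for $Q = yy^t$ with $y$ a unit vector we get
$$
\alpha(0,Q;B) = \frac{1}{y^t B^{-1} y}.
$$
The task therefore reduces to choosing $P$ and $y$ so that $y^tB^{-1}y = 1/t$.

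We fix $P = e_1e_1^t$, where $e_1,e_2$ is the standard basis of $\mathbb{R}^2$. For $\theta\in[0,\pi/2]$ we set $y_\theta = \cos\theta\, e_1 + \sin\theta\, e_2$ and $Q_\theta = y_\theta y_\theta^t$. Then
$$
g(\theta) = y_\theta^t B^{-1} y_\theta = \frac{\cos^2\theta}{p} + \frac{\sin^2\theta}{q}.
$$
This function is continuous on $[0,\pi/2]$, with $g(0)=1/p$ and $g(\pi/2)=1/q$. From $0<q<t<p$ we get $1/p<1/t<1/q$. By the intermediate value theorem there is a $\theta_0$ with $g(\theta_0)=1/t$. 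Explicitly, $\sin^2\theta_0 = (1/t-1/p)/(1/q-1/p)$, and this value lies in $(0,1)$. Taking $Q=Q_{\theta_0}$ then gives $t = \alpha(0,Q;pP+q(I-P))$.

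There is no real obstacle. The only points to check are that $B$ is invertible, which holds because $q>0$, so that the first case of Lemma \ref{sformula} applies, and that the bounds $1/p<1/t<1/q$ follow from the strict inequalities $q<t<p$.
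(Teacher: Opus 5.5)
Your proof is correct and follows essentially the same route as the paper: both apply Lemma \ref{sformula} to the invertible matrix $pP+q(I-P)$, observe that $Q=P$ gives the value $p$ and $Q=I-P$ gives $q$, and conclude by continuity. The only difference is that the paper invokes the connectedness of the set of $2\times 2$ rank one projections, whereas you use the explicit path $\theta\mapsto y_\theta y_\theta^t$ and even solve for $\theta_0$ directly.
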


\begin{proof}
Take any rank one projection $P \in S_2$. Because  $pP + q(I-P)$ is invertible in $S_2$ we have
\begin{equation}\label{porse}
 \alpha ( 0, Q ; pP + q(I-P)) = { 1 \over {\rm tr}\, \left( \left( {1 \over p} P + {1 \over q} (I-P) \right) Q \right) }
\end{equation}
for any rank one projection $Q \in S_2$. Clearly, $ \alpha ( 0, P ; pP + q(I-P)) = p$ and $ \alpha ( 0, I-P ; pP + q(I-P)) = q$. We complete the proof by observing that (\ref{porse}) depends continuously on $Q$ and the set of all $2 \times 2$ rank one projections is connected.  
\end{proof}

\begin{lemma}\label{less}
Let $A,B_1 , B_2 \in S_n$. Assume that $A \le B_j$, $j=1,2$. Then the following two statements are equivalent.
\begin{enumerate}
\item $B_1 \le B_2$.
\item For every rank one projection $P \in P_{n}^1$ we have $\alpha (A,P;B_1) \le \alpha (A,P;B_2)$.
\end{enumerate}
\end{lemma}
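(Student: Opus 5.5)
The implication $1 \Rightarrow 2$ is immediate from the definition. If $B_1 \le B_2$ and $A + tP \le B_1$, then $A + tP \le B_2$. Hence the set $\{t : A+tP \le B_1\}$ is contained in $\{t : A+tP\le B_2\}$, and taking maxima gives $\alpha(A,P;B_1)\le \alpha(A,P;B_2)$ for every $P \in P_n^1$.

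For $2 \Rightarrow 1$, translate by $A$: put $C_j = B_j - A \ge 0$. Then $\alpha(A,P;B_j) = \alpha(0,P;C_j)$, and it suffices to show that $\alpha(0,P;C_1)\le\alpha(0,P;C_2)$ for all $P$ forces $C_1 \le C_2$.

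First compare images. If $x$ is a unit vector in $\operatorname{Im} C_1$ with $P = xx^t$, Lemma \ref{sformula} gives $\alpha(0,P;C_1) = 1/(x^tC_1^\dagger x) > 0$, since $C_1^\dagger$ is positive definite on $\operatorname{Im} C_1$. The hypothesis then gives $\alpha(0,P;C_2)>0$, and by the second item of Lemma \ref{sformula} this forces $x\in \operatorname{Im} C_2$. So $\operatorname{Im} C_1 \subset \operatorname{Im} C_2$.

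Now restrict everything to $V = \operatorname{Im} C_2$, on which $C_2$ is invertible. For a unit vector $x\in V$ we need to show $x^t C_1 x \le x^t C_2 x$. Because $C_1\ge 0$ and $C_1 = 0$ on $V^\perp$, this quadratic-form inequality on $V$ is equivalent to $C_1 \le C_2$ on all of $\mathbb R^n$, since both sides vanish on $V^\perp$ and there are no cross terms. The most direct route avoids inverses: let $s = \alpha(0,xx^t;C_1)$ and aim for a contradiction with $C_1 \not\le C_2$.

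The cleaner route uses the dual characterization. On $V$, by Lemma \ref{sformula}, the hypothesis reads $x^tC_2^{-1}x \le x^tC_1^\dagger x$ for $x\in\operatorname{Im}C_1$, and $x^t C_2^{-1} x \ge 0$ for $x \in V\setminus \operatorname{Im}C_1$, where $\alpha(0,P;C_1)=0$ and the hypothesis is vacuous. I would avoid the Moore–Penrose calculus and argue by perturbation. For $\varepsilon>0$ set $C_1^\varepsilon = C_1 + \varepsilon Q$, where $Q$ is the orthogonal projection onto $V\ominus \operatorname{Im} C_1$; then $C_1^\varepsilon$ is invertible on $V$. It is simpler still to use the following fact.

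For $D>0$ on $V$, $\alpha(0,xx^t;D) = \min_{y} \langle Dy,y\rangle/\langle y,x\rangle^2$, and in particular $\alpha(0,xx^t;D)\le \langle Dx,x\rangle$, with the infimum over $y$ achieved at $y = D^{-1}x$. Applied to $C_1$ on $\operatorname{Im}C_1$, this gives $\langle C_1 y,y\rangle = \sup_x \alpha(0,xx^t;C_1)\langle x,y\rangle^2$, i.e.\ $\langle Dy,y\rangle=\sup_{\|x\|=1}\alpha(0,xx^t;D)\langle x,y\rangle^2$. To verify this identity: $\alpha(0,xx^t;D)\langle x,y\rangle^2 \le \langle Dy,y\rangle$ is Cauchy–Schwarz in the $D^{-1}$ inner product, $\langle x,y\rangle^2\le (x^tD^{-1}x)(y^tDy)$, and equality holds for $x$ proportional to $Dy$. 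The same identity holds for $D = C_1$ on all of $\mathbb R^n$ using $C_1^\dagger$, with the sup taken over $x\in\operatorname{Im}C_1$ (where $Dy\in\operatorname{Im}C_1$), and for $x \notin \operatorname{Im}C_1$ the term is $0$. It likewise holds for $D = C_2$.

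With this representation the conclusion follows at once. The right-hand side of $\langle C_jy,y\rangle=\sup_{\|x\|=1}\alpha(0,xx^t;C_j)\langle x,y\rangle^2$ is monotone in $\alpha$, so the hypothesis gives $\langle C_1 y,y\rangle\le\langle C_2y,y\rangle$ for every $y$, i.e.\ $B_1\le B_2$. The main obstacle is proving this sup-formula carefully for singular $C_j$; the key step there is the Cauchy–Schwarz inequality for the positive form $C_j^\dagger$ on $\operatorname{Im} C_j$.
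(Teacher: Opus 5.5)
Your proof is correct. It shares the paper's first step, the translation $\alpha(A,P;B)=\alpha(0,P;B-A)$ reducing to $A=0$. After that the two proofs diverge. The paper does not argue the case $A=0$ at all; it cites \cite[Lemma 2.1]{Sem}, which builds on \cite{BG}. You instead give a self-contained proof through the variational identity
\[
\langle Dy,y\rangle=\max_{\|x\|=1}\alpha(0,xx^t;D)\,\langle x,y\rangle^2 ,\qquad D\ge 0 .
\]
The right-hand side is monotone in $\alpha$, so $C_1\le C_2$ follows at once. Your route also shows explicitly how $D$ is reconstructed from $P\mapsto\alpha(0,P;D)$: $D$ is the least upper bound of $\{\alpha(0,P;D)P\}$ in Loewner's order. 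The paper's route buys only brevity.

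You can streamline the identity and drop your worry about singular $C_j$. The inequality $\alpha(0,xx^t;D)\langle x,y\rangle^2\le\langle Dy,y\rangle$ follows directly from the definition of $\alpha$: since $\alpha(0,xx^t;D)\,xx^t\le D$, evaluate both quadratic forms at $y$. This needs neither Cauchy--Schwarz nor Moore--Penrose bookkeeping, whether or not $D$ is invertible. Only attainment uses Lemma \ref{sformula}. Take $x=Dy/\|Dy\|\in{\rm Im}\,D$; then $DD^\dagger D=D$ gives $\alpha=\|Dy\|^2/\langle Dy,y\rangle$, and the product equals $\langle Dy,y\rangle$. The case $Dy=0$ is trivial and worth stating.

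Several passages are unused and should be cut from a final write-up:
\begin{itemize}
\item the image-containment step ${\rm Im}\,C_1\subset{\rm Im}\,C_2$;
\item the restriction to $V$;
\item the abandoned contradiction route;
\item the perturbation $C_1^\varepsilon$.
\end{itemize}
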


\begin{proof}
Let $A,B \in S_n$, $P \in P_{n}^1$, and assume that $A \le B$. Then clearly,
$$
\alpha (A,P;B) = \alpha (0,P;B-A).
$$
It follows that it is enough to prove the special case when $A=0$. This special case has been proved in \cite[Lemma 2.1]{Sem} using the ideas from \cite{BG}. 
\end{proof}

\begin{corollary}\label{equal1}
Let $A,B_1 , B_2 \in S_n$. Assume that $A \le B_j$, $j=1,2$. Then the following two statements are equivalent.
\begin{enumerate}
\item $B_1 = B_2$.
\item For every $P \in P_{n}^1$ we have $\alpha (A,P;B_1) = \alpha (A,P; B_2)$.
\end{enumerate}
\end{corollary}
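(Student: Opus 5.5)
This follows directly from Lemma \ref{less}, applied in both directions.

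First, the implication from 1 to 2 is immediate. If $B_1 = B_2$, then for every $P \in P_n^1$ the two sets $\{ t \in \mathbb{R} \, : \, A + tP \le B_1\}$ and $\{ t \in \mathbb{R} \, : \, A + tP \le B_2\}$ coincide. Hence their maxima agree, that is, $\alpha (A,P;B_1) = \alpha (A,P;B_2)$.

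For the converse, assume that $\alpha (A,P;B_1) = \alpha (A,P;B_2)$ for every $P \in P_n^1$. Then in particular $\alpha (A,P;B_1) \le \alpha (A,P;B_2)$ for all $P$. Since $A \le B_1$ and $A \le B_2$, the hypotheses of Lemma \ref{less} hold, and the lemma gives $B_1 \le B_2$. Exchanging the roles of $B_1$ and $B_2$, the inequalities $\alpha (A,P;B_2) \le \alpha (A,P;B_1)$ give $B_2 \le B_1$ in the same way.

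It remains to combine the two inequalities. If $B_1 \le B_2$ and $B_2 \le B_1$, then $B_2 - B_1$ is a symmetric matrix with $\langle (B_2 - B_1)x,x\rangle = 0$ for all $x$. Because the matrix is symmetric, polarization (or the fact that its eigenvalues are all zero) shows $B_2 - B_1 = 0$. Therefore $B_1 = B_2$, which is statement 1.

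None of these steps is difficult. The only substantive input is Lemma \ref{less}, which is taken as given.
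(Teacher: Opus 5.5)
Your proof is correct and is exactly the intended argument. The paper states this corollary without proof as an immediate consequence of Lemma \ref{less}: one applies it in each direction and combines the two inequalities by antisymmetry of Loewner's order, just as you do.
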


\begin{lemma}\label{jztzavr}
Let $A,B \in S_n$ with $A < B$.
Let $(P_k) \subset P_{n}^1$ be a convergent sequence of projections of rank one. Denote $P = \lim P_k$. Then
$$
\lim \alpha (A,P_k;B) =  \alpha (A,P;B).
$$
\end{lemma}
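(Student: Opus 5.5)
Since $A<B$, the matrix $B-A$ is positive definite, hence invertible, so $\operatorname{Im}(B-A)=\mathbb{R}^n$ and $(B-A)^\dagger=(B-A)^{-1}$. By the first item of Lemma \ref{sformula}, for every rank one projection $Q$ we therefore have
$$
\alpha(A,Q;B)=\frac{1}{{\rm tr}\,\left((B-A)^{-1}Q\right)}.
$$
The plan is to read the claim off this formula by continuity.

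First I check that the denominator is strictly positive, and not merely nonzero. If $Q=xx^t$ with $x$ a unit vector, then ${\rm tr}\,((B-A)^{-1}Q)=x^t(B-A)^{-1}x$. This is positive because $(B-A)^{-1}$ is positive definite (it is the inverse of a positive definite matrix). For the limit projection $P$ the number ${\rm tr}\,((B-A)^{-1}P)$ is thus a positive real number.

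Next, the map $Q\mapsto {\rm tr}\,((B-A)^{-1}Q)$ is a linear functional on $S_n$, hence continuous. Since $P_k\to P$, it follows that ${\rm tr}\,((B-A)^{-1}P_k)\to{\rm tr}\,((B-A)^{-1}P)>0$. The function $s\mapsto 1/s$ is continuous on $(0,\infty)$, so
$$
\alpha(A,P_k;B)=\frac{1}{{\rm tr}\,((B-A)^{-1}P_k)}\longrightarrow \frac{1}{{\rm tr}\,((B-A)^{-1}P)}=\alpha(A,P;B).
$$
This is exactly the statement of the lemma.

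No step is really an obstacle. The only point needing care is that $A<B$ (strict) is what allows the first case of Lemma \ref{sformula} to be used for every rank one projection, including the limit $P$. Without strictness, the second case of that lemma could give a value $0$ at the limit, and continuity can fail.
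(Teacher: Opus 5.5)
Your proof is correct and follows the paper's argument: you apply the formula $\alpha(A,Q;B)=1/{\rm tr}\,((B-A)^{-1}Q)$ from Lemma \ref{sformula}, which is valid for every rank one $Q$ because $B-A$ is invertible, and then use the continuity of $Q\mapsto {\rm tr}\,((B-A)^{-1}Q)$ together with the fact that the limit ${\rm tr}\,((B-A)^{-1}P)$ is nonzero. Your observation that this limit is actually positive, because $(B-A)^{-1}>0$, is a harmless sharpening of the paper's one-line proof.
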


\begin{proof}
The statement follows directly from
$$
\lim  {\rm tr}\, \left(    (B-A)^{-1} P_k \right) = {\rm tr}\, \left(    (B-A)^{-1} P \right) \not=0.
$$ 
\end{proof}

\begin{remark}
It would be tempting to conjecture that the above statement holds true if we replace the assumption $A < B$ by the weaker assumption that $A \le B$. An easy example shows that this conjecture is wrong. Take any convergent sequence of rank one projections $(P_k)$ satisfying $P_k \not= P = \lim P_k$, $k=1,2,\ldots$ Then for every positive integer $k$ we have $\alpha (0,P_k;P) =0$ and therefore
$$
\lim \alpha (0,P_k;P) = 0,
$$
while $ \alpha (0,P;P) = 1$. A slight modification of this example shows that when $A \le B$ but $A \not< B$ we can find a  convergent sequence of rank one projections $(P_k)$ such that the sequence $(\alpha (A,P_k;B) )$ is not convergent.
\end{remark}

\begin{corollary}\label{equal}
Let $A,B_1 , B_2 \in S_n$, $A < B_j$, $j=1,2$, and $\mathcal{P} \subset P_{n}^1$ be a dense subset. Then the following two statements are equivalent.
\begin{enumerate}
\item $B_1 = B_2$.
\item For every rank one projection $P \in \mathcal{P}$ we have $\alpha (A,P;B_1) = \alpha (A,P; B_2)$.
\end{enumerate}
\end{corollary}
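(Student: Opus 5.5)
The implication from the first statement to the second is trivial, so the plan concerns only the converse: assume $\alpha (A,P;B_1) = \alpha (A,P;B_2)$ for every $P \in \mathcal{P}$, and deduce $B_1 = B_2$. The idea is to extend the equality from the dense set $\mathcal{P}$ to all of $P_{n}^1$ by continuity, and then invoke Corollary \ref{equal1}.

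Fix an arbitrary $P \in P_{n}^1$. Because $\mathcal{P}$ is dense in $P_{n}^1$, we can choose a sequence $(P_k) \subset \mathcal{P}$ with $\lim P_k = P$. The hypothesis $A < B_j$ is exactly what Lemma \ref{jztzavr} requires, so for $j=1,2$ we get
$$
\alpha (A,P;B_j) = \lim_{k} \alpha (A,P_k;B_j).
$$
For each $k$ the terms with $j=1$ and $j=2$ coincide, since $P_k \in \mathcal{P}$. Hence the two limits are equal, that is, $\alpha (A,P;B_1) = \alpha (A,P;B_2)$.

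As $P$ was arbitrary, the equality holds on all of $P_{n}^1$. Since $A \le B_j$ for $j=1,2$, Corollary \ref{equal1} applies and gives $B_1 = B_2$.

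There is no real obstacle. The one point to watch is that the strict inequality $A < B_j$ is needed for the continuity in Lemma \ref{jztzavr}; the Remark following that lemma shows that continuity can fail under the weaker assumption $A \le B_j$. Density is understood with respect to the norm topology on $P_{n}^1 \subset S_n$, and this is the same topology in which Lemma \ref{jztzavr} is stated.
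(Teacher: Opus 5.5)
Your proposal is correct and follows the paper's own argument: the paper also extends the equality from $\mathcal{P}$ to all of $P_{n}^1$ using the continuity in Lemma \ref{jztzavr}, which needs $A < B_j$, and then concludes with Corollary \ref{equal1}. You simply write out the density and sequential-limit step that the paper leaves implicit.
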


\begin{proof}
The nontrivial direction is a straightforward consequence of Corollary \ref{equal1} and Lemma \ref{jztzavr}.
\end{proof}

\begin{lemma}\label{jztzb}
Let $A,B \in S_n$ with $A < B$.
Let $(B_k) \subset S_n$ be a convergent sequence such that $B = \lim B_k$ and $A \le B_k$, $k=1,2,\ldots$ Then for every $P \in P_{n}^1$ we have
$$
\lim \alpha (A,P;B_k) =  \alpha (A,P;B).
$$
\end{lemma}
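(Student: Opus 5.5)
By translating, $\alpha (A,P;B_k) = \alpha (0,P;B_k - A)$ and $\alpha(A,P;B)=\alpha(0,P;B-A)$. So we may assume $A=0$, $B>0$, $B_k \ge 0$ and $B_k \to B$. Since $B$ is positive definite and the set of positive definite matrices is open, there is $k_0$ such that $B_k > 0$ for all $k \ge k_0$. We may therefore discard finitely many terms and assume every $B_k$ is invertible.

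For invertible $B_k$ every unit vector lies in ${\rm Im}\, B_k = \mathbb{R}^n$, and $B_k^\dagger = B_k^{-1}$. The first item of Lemma \ref{sformula} then gives
$$
\alpha (0,P;B_k) = {1 \over {\rm tr}\,(B_k^{-1} P)}, \qquad \alpha(0,P;B) = {1 \over {\rm tr}\,(B^{-1}P)}.
$$
Inversion is continuous on the open set of invertible matrices, so $B_k^{-1} \to B^{-1}$. The trace is linear and hence continuous, so ${\rm tr}\,(B_k^{-1}P) \to {\rm tr}\,(B^{-1}P)$. Finally, ${\rm tr}\,(B^{-1}P) = x^t B^{-1} x > 0$ because $B^{-1} > 0$ and $x$ is a unit vector. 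Taking reciprocals is therefore continuous at this limit, which yields $\lim \alpha(A,P;B_k) = \alpha(A,P;B)$.

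The only step needing care is the reduction to invertible $B_k$, since for singular $B_k - A$ Lemma \ref{sformula} can give the value $0$. It is settled by the openness of the cone of positive definite matrices, which follows from continuity of eigenvalues or from $\langle B_k x,x\rangle \ge \lambda_{\min}(B)-\|B_k-B\| > 0$ for unit $x$ once $\|B_k - B\|$ is small. The argument is otherwise routine.
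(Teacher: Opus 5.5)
Your proof is correct and follows the paper's argument: for large $k$ one has $A < B_k$, so Lemma~\ref{sformula} gives $\alpha(A,P;B_k) = 1/{\rm tr}\,((B_k-A)^{-1}P)$, and continuity of inversion and trace, together with the nonzero limit ${\rm tr}\,((B-A)^{-1}P)$, finishes it. You additionally spell out the translation to $A=0$ and the openness of the positive definite cone, which the paper leaves implicit.
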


\begin{proof}
It follows from $A < B$ and $B = \lim B_k$ that there exists a positive integer $k_0$ such that $A < B_k$ for every integer $k > k_0$. For all such integers $k$ we have
$$
\alpha (A,P;B_k)    = { 1 \over {\rm tr}\, \left(    (B_k-A)^{-1} P \right) }.
$$
The desired conclusion is a straightforward consequence. 
\end{proof}

\begin{remark}
Again, the statement is not true under the weaker assumption that $A \le B$ even if we assume that $A < B_k$ for every positive integer $k$. 
Indeed, take $A = 0$ and let $(P_k)$ be a convergent sequence of rank one projections such that $P_k \not= P = \lim P_k$, $k=1,2,\ldots$  Note that for any pair of rank one projections $Q,R$ we have $0 \le {\rm tr}\, (QR) \le 1$, and ${\rm tr}\, (QR) = 1$ if and only if $Q=R$. Thus,  for every positive integer $k$ we have $1 \ge {\rm tr}\, (P (I-P_k)) = 1 - {\rm tr}\, (P P_k) > 0$. Define a sequence $(B_k) \subset S_n$ by
$$
B_k = P_k + c_k ( I - P_k) , \ \ \ k=1,2,3,\ldots,
$$
where
$$
0 < c_k = { {\rm tr}\, (P (I-P_k)) \over k} \le {1 \over k}, \ \ \ k=1,2,3,\ldots 
$$
Therefore, $B_ k > 0$, $k=1,2,\ldots$, and $B = \lim B_k = P$. Further, for every positive integer $k$ we can calculate
$$
\alpha (0,P;B_k) =  { 1 \over {\rm tr}\, \left(    B_{k}^{-1} P \right) }  =  { 1 \over {\rm tr}\, \left(    (  P_k + c_k ( I - P_k))^{-1} P \right) }
$$
$$
=  { 1 \over {\rm tr}\, \left(    (  P_k + c_{k}^{-1} ( I - P_k)) P \right) } =  { 1 \over {\rm tr}\, (P_k P)  + k} \le {1 \over k},
$$
and consequently,
$$
\lim \alpha (0,P;B_k) = 0,
$$
while $ \alpha (0,P;B) = 1$.
\end{remark}

\begin{lemma}\label{nicce}
Let $n \ge 2$ be an integer and $\phi : [0,I] \to S_n$ an order embedding satisfying $\phi (0) = 0$. Assume that $\phi$ is continuous at $0$. Then $\phi$ is injective and ${\rm rank}\, \phi (X) = {\rm rank}\, X$ for every $X \in [0,I]$.
\end{lemma}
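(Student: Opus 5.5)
The injectivity part is immediate. If $\phi(X)=\phi(Y)$, then both $\phi(X)\le\phi(Y)$ and $\phi(Y)\le\phi(X)$ hold. Since $\phi$ is an order embedding, this gives $X\le Y\le X$, hence $X=Y$. Also $0\le X$ gives $0=\phi(0)\le\phi(X)$, so all values of $\phi$ are positive. The real content is the rank statement, which I would split into the two inequalities ${\rm rank}\,\phi(X)\ge {\rm rank}\,X$ and ${\rm rank}\,\phi(X)\le{\rm rank}\,X$.

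The main tool will be an order-theoretic description of "disjointness of images". For $X,Y\in[0,I]$ I claim that $\Image X\cap\Image Y=\{0\}$ if and only if $0$ is the only common lower bound of $X$ and $Y$ in $[0,I]$. The forward implication follows from Lemma \ref{trvtrv}: if $Z\le X,Y$, then $\Image Z\subset\Image X\cap\Image Y$. For the converse, take a unit vector $x$ in the intersection and $P=xx^t$. By Lemma \ref{sformula} (applied to $A=0$ and $B=X$ or $B=Y$), we have $\alpha(0,P;X)>0$ and $\alpha(0,P;Y)>0$, so $sP$ is a nonzero common lower bound for small $s>0$. Since every matrix has nonzero lower bounds inside any nonzero element, the rank of $X$ can then be read off as the maximal number $r$ of elements $Y_1,\dots,Y_r\in[0,X]\setminus\{0\}$ whose images form a direct sum. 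Taking $Y_i=sP_i$ with orthogonal $P_i$ shows this number is at least ${\rm rank}\,X$, and it is at most ${\rm rank}\,X$ by Lemma \ref{trvtrv}.

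The plan is to show that $\phi$ carries such configurations to configurations of the same kind, in both directions.
\begin{itemize}
\item For the direction $\phi(Y_1),\dots,\phi(Y_r)\le\phi(X)$ with independent images: pairwise independence is not enough, so I would instead use the chain $Z_k=s(P_1+\dots+P_k)$, $k=0,\dots,r$. I would then show that $\Image\phi(Z_{k-1})\subsetneq\Image\phi(Z_k)$, which gives ${\rm rank}\,\phi(X)\ge r$.
\item For the strict inclusion, suppose $\Image\phi(Z_k)=\Image\phi(Z_{k-1})$. Then there is $\lambda\ge1$ with $\phi(Z_k)-\phi(Z_{k-1})\le(\lambda-1)\phi(Z_{k-1})$.
\item Continuity at $0$ should now give the contradiction. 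Replace $Z_{k-1}$ by $tZ_{k-1}$ and $Z_k$ by $tZ_{k-1}+sP_k$ with $t\to0$. The images of the $\phi$-values are monotone in these parameters, so the equality of images persists. Then $\phi(sP_k)\le\phi(tZ_{k-1}+sP_k)$, whose image lies in $\Image\phi(tZ_{k-1})$ while $\phi(tZ_{k-1})\to0$.
\item Using the $\alpha$-formula from Lemma \ref{sformula} and Lemma \ref{jztzb}, I would try to show that $\alpha(0,Q;\phi(tZ_{k-1}+sP_k))$ must then tend to $0$ for suitable rank one $Q$. This would force $\phi(sP_k)=0$, contradicting injectivity.
\end{itemize}

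For the reverse inequality I would use maximality. Suppose ${\rm rank}\,\phi(X)>{\rm rank}\,X=r$. First treat $r<n$: pick a nonzero $W\in[0,I]$ with $\Image W\cap\Image X=\{0\}$. By the previous direction, applied to the chain built from $X$ and $W$, the matrix $\phi(X+\varepsilon W)$ has rank strictly bigger than ${\rm rank}\,\phi(X)$. Iterating this $n-r$ times adds more than $n-r$ to the rank, so ${\rm rank}\,\phi(I)>n$, which is impossible. This counting argument needs the strict increase of image dimension along chains in which the image of the domain matrix strictly increases. That is exactly the statement proved in the previous step, so both inequalities reduce to one claim: if $Y\le Z$ in $[0,I]$ and $\Image Y\subsetneq\Image Z$, then $\Image\phi(Y)\subsetneq\Image\phi(Z)$.

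The main obstacle is precisely this claim, and in particular the limiting argument via continuity at $0$. The monotonicity of images under $t\to 0$ has to be combined carefully with the failure of continuity of $\alpha$ at non-invertible matrices (the remarks after Lemmas \ref{jztzavr} and \ref{jztzb}). I would first try to handle it for the special case $Y=0$ and $Z=sP$, and then transport to general $Y$. For this I would apply the special case to the shifted map $V\mapsto\phi(Y+V)-\phi(Y)$ on a small interval $[0,\delta I']$ inside the range of $I-Y$. Continuity at $0$ of this shifted map is not given by the hypothesis, so I would obtain it from Lemma \ref{automcont}, which provides continuity at many scalar points.
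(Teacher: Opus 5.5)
Your reduction matches the paper. Both inequalities follow by counting along chains once one knows that $Y\le Z$ and $\rank Y<\rank Z$ imply $\rank\phi(Y)<\rank\phi(Z)$. The paper uses the chains $X\le P_{k+1}\le\dots\le P_n=I$, with projections of ranks $k+1,\dots,n$, and $X_1\le\dots\le X_k=X$ with $\rank X_r=r$. The characterization of trivially intersecting images by common lower bounds is not needed.

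The gap is that you never prove this key claim, and the route you sketch does not work.
\begin{itemize}
\item From $tZ_{k-1}\le Z_{k-1}$ and $tZ_{k-1}+sP_k\le Z_k$, Lemma \ref{trvtrv} only gives $\Image\phi(tZ_{k-1})\subset\Image\phi(Z_{k-1})$ and $\Image\phi(tZ_{k-1}+sP_k)\subset\Image\phi(Z_k)=\Image\phi(Z_{k-1})$. To get $\Image\phi(tZ_{k-1}+sP_k)\subset\Image\phi(tZ_{k-1})$ you would need $\rank\phi(tZ_{k-1})=\rank\phi(Z_{k-1})$, which is an instance of what you are proving. So the claimed persistence of equal images is unjustified.
\item Even granting it, an inclusion of images only yields $\phi(tZ_{k-1}+sP_k)\le\lambda_t\,\phi(tZ_{k-1})$, with a constant $\lambda_t$ that may blow up as $t\to0$. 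Hence $\phi(tZ_{k-1})\to0$ forces nothing about $\alpha(0,Q;\phi(tZ_{k-1}+sP_k))$, and nothing about $\phi(sP_k)$.
\item The fallback in your last paragraph also fails. The shifted map $V\mapsto\phi(Y+V)-\phi(Y)$ only sees $\rank(\phi(Y+V)-\phi(Y))$. That says nothing about whether $\Image\phi(Y)\subsetneq\Image\phi(Y+V)$; think of $\phi(Y)={\rm diag}\,(1,0)$ and $\phi(Y+V)={\rm diag}\,(2,0)$.
\item Lemma \ref{automcont} gives continuity only at scalar matrices $tI$ outside a countable set. It gives nothing at an arbitrary, possibly singular, $Y$.
\end{itemize}

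The missing idea is to shrink the \emph{larger} element, where monotonicity of images works in your favour, and keep the smaller one fixed. Suppose $Y\le Z$ and $\rank Y<\rank Z$, but $\Image\phi(Y)=\Image\phi(Z)$.
\begin{itemize}
\item If $Y=0$, then $\phi(Z)=0=\phi(0)$, contradicting injectivity.
\item Otherwise let $\sigma>0$ be the smallest nonzero eigenvalue of $\phi(Y)$. Continuity at $0$ gives $0\le\phi(tZ)\le\sigma I$ for small $t>0$.
\item Since $tZ\le Z$, we get $\Image\phi(tZ)\subset\Image\phi(Z)=\Image\phi(Y)$. On this subspace $\phi(Y)\ge\sigma$, so $\phi(tZ)\le\phi(Y)$.
\item Hence $tZ\le Y$, which by Lemma \ref{trvtrv} contradicts $\rank Z>\rank Y$.
\end{itemize}
In your notation this is just the observation that $\phi(tZ_k)\le\phi(Z_{k-1})$ for small $t>0$. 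At the same time $tZ_k\not\le Z_{k-1}$, because $P_k$ is orthogonal to $\Image Z_{k-1}$.
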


\begin{proof}
Assume that $\phi (X) = \phi (Y)$ for some $X,Y \in  [0,I]$. Then $\phi (X) \le \phi (Y)$ and therefore $X \le Y$, and also $Y \le X$. It follows that $X=Y$. 

We will first show that for any pair $X,Y \in  [0,I]$ satisfying $X \le Y$ and ${\rm rank}\, X < {\rm rank}\, Y$ we have ${\rm rank}\, \phi (X) < {\rm rank}\, \phi(Y)$. We first observe that $X \le Y$ implies $0 \le \phi(X) \le \phi(Y)$, and therefore,
${\rm rank}\, \phi (X) \le {\rm rank}\, \phi(Y)$. We have to show that ${\rm rank}\, \phi (X) \not= {\rm rank}\, \phi(Y)$. Assume on the contrary that ${\rm rank}\, \phi (X) = {\rm rank}\, \phi(Y)$. Because  ${\rm Im}\, \phi (X) \subset {\rm Im}\, \phi(Y)$ this yields 
${\rm Im}\, \phi (X) = {\rm Im}\, \phi(Y)$.  Denote by $s$ the smallest nonzero eigenvalue of $\phi (X)$.

Because ${\rm rank}\, X < {\rm rank}\, Y$ we have
\begin{equation}\label{aaq}
tY \not\le X
\end{equation}
for every positive real number $t$. The continuity of $\phi$ at $0$ implies the existence of a positive real number $t_0 \le 1$ such that for every positive real $t$, $t< t_0$, we have $0 \le \phi (tY) \le sI$. This together with ${\rm Im}\, \phi (tY) \subset {\rm Im}\, \phi (Y) = {\rm Im}\, \phi (X)$ yields $\phi (tY) \le \phi (X)$, contradicting (\ref{aaq}).

In the next step we will show that ${\rm rank}\, \phi (X) \le {\rm rank}\, X$ for every $X \in [0, I]$. If ${\rm rank}\, X = k$ then obviously we can find projections $P_{k+1}, \ldots, P_n = I$ such that 
$X \le P_{k+1} \le \ldots \le P_{n-1} \le I$ and ${\rm rank}\, P_r = r$, $r = k+1, \ldots, n$. Using the above claim together with ${\rm rank}\, \phi (I) \le n$ we arrive at the desired inequality 
${\rm rank}\, \phi (X) \le {\rm rank}\, X$.

It remains to prove that  ${\rm rank}\, \phi (X) \ge {\rm rank}\, X$ for every nonzero $X \in [0, I]$. We denote ${\rm rank}\, X = k$. Using the spectral decomposition of $X$ we can easily find $X_1 , \ldots X_k = X \in [0, I]$
such that $X_1 \le \ldots \le X_k$ and ${\rm rank}\, X_r = r$, $r=1, \ldots, k$. Because $\phi (X_1)$ is nonzero the above claim implies that  
${\rm rank}\, \phi (X) \ge {\rm rank}\, X$.
\end{proof}

\begin{corollary}\label{posle1}
Let $n \ge 2$ be an integer and $\phi : [0,I] \to S_n$ an order embedding satisfying $\phi (0) = 0$ and $\phi (I) = I$. Assume that $\phi$ is continuous at $0$ and $I$. Then  for every $X \in [0,I]$ of rank one we have $$\phi (I-X) = I -R$$
for some rank one matrix $R \in [0,I]$.
\end{corollary}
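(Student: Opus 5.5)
The idea is to pass to the "dual" map $\tilde\phi(Y) = I - \phi(I-Y)$, $Y \in [0,I]$, and apply Lemma \ref{nicce} to it. Since $Y \mapsto I-Y$ is an order anti-automorphism of $[0,I]$, and $\phi$ maps $[0,I]$ into $[\phi(0),\phi(I)] = [0,I]$, the map $\tilde\phi$ is again an order embedding $[0,I] \to S_n$. Indeed,
$$
Y_1 \le Y_2 \iff I-Y_2 \le I-Y_1 \iff \phi(I-Y_2) \le \phi(I-Y_1) \iff \tilde\phi(Y_1) \le \tilde\phi(Y_2).
$$
It also satisfies $\tilde\phi(0) = I - \phi(I) = 0$.

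Continuity of $\tilde\phi$ at $0$ is exactly continuity of $\phi$ at $I$, which is assumed. Hence Lemma \ref{nicce} applies to $\tilde\phi$ and gives ${\rm rank}\, \tilde\phi(Y) = {\rm rank}\, Y$ for every $Y \in [0,I]$.

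Now take $X \in [0,I]$ of rank one and set $R = \tilde\phi(X) = I - \phi(I-X)$. Then $R$ has rank one, and $R \in [0,I]$ because $\tilde\phi$ takes values in $[0,I]$: from $0 \le X \le I$ we get $0 = \tilde\phi(0) \le \tilde\phi(X) \le \tilde\phi(I) = I - \phi(0) = I$. This gives $\phi(I-X) = I-R$, as required.

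There is no real obstacle here. The only points to check are that $\tilde\phi$ is well defined on all of $[0,I]$ and is a genuine order embedding (the displayed chain of equivalences), and that the continuity hypothesis at $I$ transfers to continuity at $0$. The hypothesis of continuity at $0$ is not needed for this argument.
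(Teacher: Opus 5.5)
Your proposal is correct and is exactly the paper's argument: pass to the map $Y \mapsto I - \phi(I-Y)$, which is an order embedding of $[0,I]$ fixing $0$ and continuous at $0$, and apply Lemma \ref{nicce} to it. Your two side remarks are also accurate. First, $R \in [0,I]$ follows from $\phi(0)=0$ and $\phi(I)=I$. Second, the argument only uses continuity of $\phi$ at $I$.
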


\begin{proof}
Define a map $\psi : [0,I] \to S_n$ by $\psi (X) = I - \phi (I-X)$, $X \in [0,I]$. Clearly, $\psi$ is an order embedding that is continuous at $0$ and $I$ and $\psi (0) = 0$ and $\psi (I) = I$. The desired conclusion is now a straightforward consequence of Lemma \ref{nicce}.
\end{proof}

\begin{corollary}\label{posl2}
Let $n \ge 2$ be an integer and $\phi : [0,I] \to S_n$ an order embedding satisfying $\phi (0) = 0$ and $\phi (I) = I$. Assume that $\phi$ is continuous at $0$ and $I$. Then there exists an injective map $\varphi : P_{n}^1 \to P_{n}^1$ and for every $P \in P_{n}^1$ there exists a strictly increasing function $f_P : [0,1] \to [0,1]$ such that 
$$\phi (P) = \varphi (P), \ \ \ P \in P_{n}^1,$$ and  $$\phi (tP) = f_P (t) \varphi (P), \ \ \ P \in P_{n}^1, \ \, t \in  [0,1].$$ 
\end{corollary}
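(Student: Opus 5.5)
By Lemma \ref{nicce}, $\phi$ is injective and preserves rank. Every $tP$ with $P \in P_{n}^1$ and $t \in (0,1]$ has rank one, so $\phi(tP)$ is a positive rank one matrix. The plan is to show that for fixed $P$ all these matrices share the same image. Then we define $\varphi(P)$ to be the projection onto that line, read off $f_P$, and check that $\phi(P)$ is itself a projection.

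\emph{Common image.} Fix $P$ and $0 < s \le t \le 1$. Since $0 \le sP \le tP$, we get $0 \le \phi(sP) \le \phi(tP)$. Lemma \ref{trvtrv} gives ${\rm Im}\,\phi(sP) \subset {\rm Im}\,\phi(tP)$, and both images are one-dimensional, so they coincide. Hence there is a unique $Q \in P_{n}^1$ with $\phi(tP) = g_P(t)\,Q$ for all $t \in (0,1]$, where each $g_P(t) > 0$. Put $g_P(0)=0$.

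\emph{Monotonicity.} If $s<t$, then $sP \le tP$ and $sP \ne tP$. Injectivity gives $\phi(sP) \ne \phi(tP)$, and order preservation gives $\phi(sP)\le\phi(tP)$. Therefore $g_P(s) < g_P(t)$, so $g_P$ is strictly increasing on $[0,1]$. Also $P \le I$ gives $\phi(P) \le \phi(I) = I$, so $g_P(1) \le 1$.

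\emph{Reverse inequality $g_P(1) \ge 1$.} This is the main point. I would use Corollary \ref{posle1}: $\phi(I-P) = I - R$ for a rank one $R \in [0,I]$. Write $I-P \le I$ and compare $P$ with $I-P$. The matrices $P$ and $I-P$ are not comparable, so one cannot argue directly. Instead I would use the maximality characterisation. The value $1 = \max\{t : tP \le I\}$ is the largest $t$ with $tP \le I$, and $P$ is a maximal rank one element of $[0,I]$, since $P \le Y \le I$ with ${\rm rank}\,Y=1$ forces $Y=P$.

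The first idea is to argue that $g_P(1)Q \le I$ and to look for $X \in [0,I]$ with $\phi(X)=Q$. Surjectivity is unknown, so that does not work directly. The cleaner route goes through the complement. We have $P + (I-P) = I$, and $I-P$ is rank $n-1$ with kernel ${\rm Im}\,P$. Use $\psi(X) = I-\phi(I-X)$ as in Corollary \ref{posle1}, which is rank preserving. Then $\psi(I-P) = I - \phi(P) = I - g_P(1)Q$ must have rank $n-1$ by Lemma \ref{nicce} applied to $\psi$. A matrix $I - cQ$ with $Q$ a rank one projection has rank $n-1$ exactly when $c=1$, so $g_P(1)=1$.

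\emph{Conclusion.} Set $\varphi(P)=Q$ and $f_P=g_P$. Then $\phi(P)=\varphi(P)$, and $f_P:[0,1]\to[0,1]$ is strictly increasing with $f_P(0)=0$ and $f_P(1)=1$. Injectivity of $\varphi$ follows because $\varphi(P)=\phi(P)$ and $\phi$ is injective.

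The step I expect to be delicate is confirming that Lemma \ref{nicce} really applies to $\psi$. This needs $\psi(0)=0$ and continuity of $\psi$ at $0$, and both follow from $\phi(I)=I$ and continuity of $\phi$ at $I$.
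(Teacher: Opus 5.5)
Your proof is correct, but its key step, showing that $\phi(P)$ is a projection and not $sR$ with $s<1$, takes a different route from the paper's.

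\textbf{The key step.} The paper argues directly from continuity at $I$. If $\phi(P)=sR$ with $s<1$, continuity gives $\delta>0$ with $\phi((1-\delta/2)I) > sI \ge sR=\phi(P)$. Hence $(1-\delta/2)I\ge P$, which is absurd. You instead apply Lemma \ref{nicce} to the complemented map $\psi(X)=I-\phi(I-X)$. This is legitimate: $\psi(0)=0$, and $\psi$ is continuous at $0$ because $\phi$ is continuous at $I$. You then read off ${\rm rank}\,(I-\phi(P))={\rm rank}\,(I-P)=n-1$, which forces the coefficient of $Q$ to be $1$. In effect, this is Corollary \ref{posle1} pushed to the rank $n-1$ element $I-P$. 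The paper's version is a self-contained two-line contradiction. Yours recasts the normalization as a rank statement about the dual map, at the cost of invoking the full rank-preservation lemma a second time.

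\textbf{Ordering of the remaining steps.} The paper first proves that $\phi(P)=\varphi(P)$ is a projection. It then gets $\phi(rP)=t\varphi(P)$ for free from $0\le\phi(rP)\le\varphi(P)$. You establish the common image beforehand, via Lemma \ref{trvtrv} and rank preservation. You also spell out strict monotonicity through injectivity, which the paper calls obvious.

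\textbf{Loose ends.} Your exploratory remarks in that paragraph can simply be deleted; none of them is used. These are the citation of Corollary \ref{posle1} for $\phi(I-P)$, the maximality of $P$ among rank one elements, and the surjectivity detour.
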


\begin{proof}
Let $P \in P_{n}^1$. Then we already know that $\phi (P)$ is of rank one and $0 \le \phi (P) \le I$. Hence, $\phi (P) = sR$ for some rank one projection $R$ and some real number $s$, $0 \le s \le 1$. We need to show that $s=1$. Assume on the contrary that $s < 1$. Since $\phi$ is continuous at $I$ and $\phi (I) = I$ there exists a  real number $\delta > 0$ such that $A \in   [0,I]$ and $\| A - I \| < \delta$ yield $\phi (A) > sI$. Here, $\| \cdot \|$ stands for the usual operator norm. In particular, 
$$
\phi \left( I - {\delta \over 2} I \right) > sI \ge sR = \phi (P),
$$
and consequently, $ I - {\delta \over 2} I \ge P$, a contradiction. Therefore, there exists a map $\varphi : P_{n}^1 \to P_{n}^1$ such that $\phi (P) = \varphi (P)$  for every  $P \in P_{n}^1$.
It is injective by Lemma \ref{nicce}.

Let $r$ be a real number, $0 \le r \le 1$.
It follows from $0 \le rP \le P$ that $\phi (rP) = t \varphi(P)$ for some real number $t$, $0 \le t \le 1$. The function $f_P : [0,1] \to [0,1]$ which sends $r$ to $t$ is obviously strictly increasing.
\end{proof}

\begin{lemma}\label{kdajboze}
Let $n \ge 2$ be an integer and $\phi : [0,I] \to S_n$ an order embedding satisfying $\phi (0) = 0$ and $\phi (I) = I$. Assume that $\phi$ is continuous at $0$ and $I$.
 Let $\varphi : P_{n}^1 \to P_{n}^1$ 
and $f_P : [0,1] \to [0,1]$, $P \in P_{n}^1$, be as in Corollary \ref{posl2}. 

Assume that $A \in (0,I)$, $P\in P_{n}^1$, and that $(A_k) \subset (0,I)$ is a sequence such that $A < A_k$, $k =1,2,\ldots$, and $\lim A_k = A$, and
$\lim \phi (A_k) = \phi (A)$. Denote $t_0 = \alpha (0, P; A)$. Then there exists a decreasing sequence of real numbers $(t_k ) \subset (t_0 ,1)$ (note that then the sequence $(f_P (t_k))$ is decreasing and bounded below, and therefore convergent) such that
$$
\lim t_k = t_0
$$
and
$$
f_P (t_0) \le lim f_P (t_k) \le \alpha (0, \varphi (P); \phi (A)).
$$
\end{lemma}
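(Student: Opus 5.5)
Set $t_k = \alpha(0,P;A_k)$. By Corollary \ref{trvdva}, $0<A<A_k$ gives $t_0 < t_k$. Since $A_k \le I$ (indeed $A_k<I$), Lemma \ref{less} gives $t_k \le \alpha(0,P;I)=1$. Because $A_k<I$, we can apply Corollary \ref{trvdva} with $I$ in place of $B$, which yields the strict inequality $t_k<1$. By Lemma \ref{jztzb}, $t_k \to \alpha(0,P;A) = t_0$. The sequence need not be decreasing. So I pass to a subsequence, using $t_k>t_0$ and $t_k\to t_0$: choose indices inductively so that each new $t_{k_{j+1}}$ is strictly smaller than $t_{k_j}$. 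This is possible since infinitely many terms lie below any given value above $t_0$. I relabel the subsequence, together with the corresponding $A_k$, as $(t_k)$ and $(A_k)$. The hypotheses $A<A_k$, $A_k\to A$ and $\phi(A_k)\to\phi(A)$ persist along the subsequence.

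Next I compare with $\varphi(P)$. By definition of $\alpha$, $t_kP \le A_k$. Applying $\phi$ and Corollary \ref{posl2} gives $f_P(t_k)\varphi(P) = \phi(t_kP) \le \phi(A_k)$. Hence
$$
f_P(t_k) \le \alpha(0,\varphi(P);\phi(A_k)).
$$
This is legitimate since $0=\phi(0)\le\phi(A_k)$. Also $t_0P\le A \le A_k$ and $t_0<t_k$, so the strict monotonicity of $f_P$ gives $f_P(t_0)\le f_P(t_k)$. Since $(f_P(t_k))$ is decreasing and bounded below by $f_P(t_0)$, its limit $L$ satisfies $f_P(t_0)\le L$.

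It remains to show $L \le \alpha(0,\varphi(P);\phi(A))$. I expect this to be the main obstacle, because Lemma \ref{jztzb} needs $0<\phi(A)$. To get this, note $A>0$, so $A\ge \varepsilon I$ for some small $\varepsilon>0$. Then $\phi(A)\ge\phi(\varepsilon I)$, and by Lemma \ref{nicce} $\phi(\varepsilon I)$ has rank $n$ and is positive, hence $\phi(A)>0$. Lemma \ref{jztzb} then applies with base point $0$, limit $\phi(A)$, and the sequence $\phi(A_k)\ge 0$ converging to $\phi(A)$. It gives $\alpha(0,\varphi(P);\phi(A_k))\to\alpha(0,\varphi(P);\phi(A))$. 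Passing to the limit in the displayed inequality yields $L\le \alpha(0,\varphi(P);\phi(A))$, completing the plan.

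As an alternative to the limit argument, one can argue directly. From $LQ\le f_P(t_k)Q\le\phi(A_k)$ with $Q=\varphi(P)$, letting $k\to\infty$ gives $LQ\le\phi(A)$, since the positive cone is closed. This shows the final inequality immediately, without needing $\phi(A)>0$. I would use this simpler closedness argument in the write-up.
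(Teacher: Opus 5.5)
Your proof is correct and is essentially the paper's argument. The paper picks any $t_k$ with $t_kP\not\le A$ and $t_kP\le A_k$ (via Corollary \ref{trvdva}), makes the sequence decreasing, gets $t_k\to t_0$ from Lemma \ref{jztzb}, and concludes by your closedness argument, letting $k\to\infty$ in $f_P(t_k)\varphi(P)\le\phi(A_k)$. Taking $t_k=\alpha(0,P;A_k)$ itself and extracting a strictly decreasing subsequence is a harmless variant, and, as you noticed, the detour through $\phi(A)>0$ is unnecessary.
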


\begin{proof}
By the remark following Corollary \ref{trvdva} we can find for every positive integer $k$ a real number $t_k$ such that $t_k P \not\le A$ and $t_k P \le A_k$. Clearly, each $t_k$ belongs to the open interval $(t_0 ,1)$. It is easy to modify the sequence $(t_k)$ to be a decreasing sequence without spoiling the properties $t_k P \not\le A$, $t_k P \le A_k$, $t_k \in (t_0,1)$, $k=1,2,\ldots$  By Lemma \ref{jztzb},
$$ 
\lim \alpha (0,P; A_k) = t_0.
$$
Combining this with $t_k \le \alpha (0, P; A_k)$ we see that $\lim t_k = t_0$.

For every positive integer $k$ we know that
$$
\phi (A_k)\ge  f_P (t_k) \varphi (P) .
$$
Sending $k$ to infinity we arrive at $\phi (A) \ge \lim f_P (t_k) \varphi (P)$ which finally gives
$f_P (t_0) \le lim f_P (t_k) \le \alpha (0, \varphi (P); \phi (A))$.
\end{proof}

\begin{lemma}\label{kdajbize}
Let $n \ge 2$ be an integer and $\phi : [0,I] \to S_n$ an order embedding satisfying $\phi (0) = 0$ and $\phi (I) = I$. Assume that $\phi$ is continuous at $0$ and $I$.
 Let $\varphi : P_{n}^1 \to P_{n}^1$ 
and $f_P : [0,1] \to [0,1]$, $P \in P_{n}^1$, be as in Corollary \ref{posl2}. 

Assume that $A \in (0,I)$, $P\in P_{n}^1$, and that $(A_k) \subset (0,I)$ is a sequence such that $A > A_k$, $k =1,2,\ldots$, and $\lim A_k = A$, and
$\lim \phi (A_k) = \phi (A)$. Denote $t_0 = \alpha (0, P; A)$. Then there exists an increasing sequence of real numbers $(t_k ) \subset (0, t_0)$ (note that then the sequence $(f_P (t_k))$ is increasing and bounded above, and therefore convergent) such that
$$
\lim t_k = t_0
$$
and
$$
f_P (t_0) \ge lim f_P (t_k) \ge \alpha (0, \varphi (P); \phi (A)).
$$
\end{lemma}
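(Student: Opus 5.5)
This is the mirror image of Lemma \ref{kdajboze}, and I would follow its proof with all inequalities reversed. Since $0 < A_k < A$, Corollary \ref{trvdva} gives $\alpha(0,P;A_k) < \alpha(0,P;A) = t_0$ for every $k$. So for each $k$ there is a real $t_k$ with
$$
\alpha(0,P;A_k) < t_k < t_0 .
$$
For such $t_k$ we have $t_k P \not\le A_k$ and $t_k P \le A$. Moreover $t_k > \alpha(0,P;A_k) \ge 0$, and since $A > 0$ we have $t_0 \le 1$, so $t_k \in (0,t_0)$.

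\textbf{Making the sequence increasing and convergent.} I would replace $t_k$ by $\max\{t_1,\ldots,t_k\}$. The upper bound $t_k < t_0$ survives, hence $t_kP \le A$ survives. The condition $t_kP\not\le A_k$ also survives: enlarging $t$ keeps $tP \not\le A_k$, because $\{t : tP \le A_k\}$ is an interval of the form $(-\infty,\alpha(0,P;A_k)]$.

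To see that $t_k \to t_0$, note that $A_k \to A$ and $0 < A$, and that $0 \le A_k$ for all $k$. Lemma \ref{jztzb} (with its $A$ equal to $0$) then gives $\alpha(0,P;A_k)\to t_0$. Combined with $\alpha(0,P;A_k) < t_k < t_0$, this yields $\lim t_k = t_0$.

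\textbf{The inequalities.} Since $t_k<t_0$ and $f_P$ is strictly increasing, $(f_P(t_k))$ is increasing and bounded above by $f_P(t_0)$. This gives $f_P(t_0)\ge \lim f_P(t_k)$.

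For the other inequality, $t_kP\not\le A_k$ and the order-embedding property give $\phi(t_kP)\not\le \phi(A_k)$. By Corollary \ref{posl2} this reads
$$
f_P(t_k)\varphi(P)\not\le \phi(A_k), \quad\text{so}\quad f_P(t_k) > \alpha(0,\varphi(P);\phi(A_k)).
$$
Here $\phi(A_k)\ge\phi(0)=0$, so $\alpha$ is defined.

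\textbf{Main obstacle: passing to the limit.} We need $\alpha(0,\varphi(P);\phi(A_k)) \to \alpha(0,\varphi(P);\phi(A))$, which would follow from Lemma \ref{jztzb} if $\phi(A) > 0$. Since $A > 0$, we have $A\ge \varepsilon I$ for some $\varepsilon>0$, so $\phi(A)\ge\phi(\varepsilon I)$. By Lemma \ref{nicce}, $\phi(\varepsilon I)$ has rank $n$, and it is positive, so $\phi(A) > 0$.

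Lemma \ref{jztzb} then applies with the base point $0$, the limit $\phi(A)$, and the sequence $\phi(A_k)\ge 0$. Taking $k\to\infty$ gives $\lim f_P(t_k)\ge \alpha(0,\varphi(P);\phi(A))$, which completes the chain of inequalities.
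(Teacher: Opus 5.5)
Your proof is correct and follows the paper's argument step for step. You pick $t_k$ strictly between $\alpha(0,P;A_k)$ and $t_0$ via Corollary \ref{trvdva}, squeeze $t_k \to t_0$ with Lemma \ref{jztzb}, and pass $f_P(t_k) > \alpha(0,\varphi(P);\phi(A_k))$ to the limit after getting $\phi(A)>0$ from Lemma \ref{nicce}. One trivial slip: $t_0 \le 1$, which is needed so that $f_P(t_0)$ is defined, follows from $A<I$, not from $A>0$; also, you could apply Lemma \ref{nicce} to $A$ directly instead of going through $\varepsilon I$.
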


\begin{proof}
By Corollary \ref{trvdva} we can find for every positive integer $k$ a positive real number $t_k$ such that $t_k  < t_0$ and $t_k P \not\le A_k$.  We can further assume that $(t_k)$ is an increasing sequence.  By Lemma \ref{jztzb},
$
\lim \alpha (0,P; A_k) = t_0$.
This together with $t_k > \alpha (0, P; A_k)$ yields $\lim t_k = t_0$.
For every positive integer $k$ we have 
$f_P (t_0) >  f_P (t_k) > \alpha (0, \varphi (P); \phi (A_k))$. It follows from Lemma \ref{nicce} that $\phi (A) > 0$. We complete the proof by taking the limits as $k$ tends to infinity.
\end{proof}

We have a simple consequence of the previous two lemmas.

\begin{corollary}\label{kdajbuze}
Let $n \ge 2$ be an integer and $\phi : [0,I] \to S_n$ an order embedding satisfying $\phi (0) = 0$ and $\phi (I) = I$. Assume that $\phi$ is continuous at $0$ and $I$.
 Let $\varphi : P_{n}^1 \to P_{n}^1$ 
and $f_P : [0,1] \to [0,1]$, $P \in P_{n}^1$, be as in Corollary \ref{posl2}. 

Assume that $A \in (0,I)$ and $P\in P_{n}^1$.
Suppose that there are two sequences $(A_k), (B_k) \subset (0,I)$ such that $A_k < A < B_k$, $k =1,2,\ldots$, $\lim A_k = A = \lim B_k$, and
$\lim \phi (A_k) = \phi (A) = \lim \phi (B_k)$. Denote $t_0 = \alpha (0, P; A)$. Then $f_P$ is continuous at $t_0$ and
$$
f_P (t_0) = \alpha (0, \varphi (P); \phi (A)).
$$
\end{corollary}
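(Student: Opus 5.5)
The plan is to combine Lemma \ref{kdajboze}, applied to the sequence $(B_k)$, with Lemma \ref{kdajbize}, applied to the sequence $(A_k)$, and squeeze both one-sided limits of $f_P$ at $t_0$ against the single quantity $\alpha (0, \varphi (P); \phi (A))$. Since $f_P$ is strictly increasing, its one-sided limits at $t_0$ exist. Note first that $t_0 = \alpha(0,P;A) \in (0,1)$, because $0<A<I$. Hence both lemmas apply.

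Lemma \ref{kdajboze} with the sequence $(B_k)$, where $A<B_k$, $B_k \to A$ and $\phi(B_k)\to\phi(A)$, gives a decreasing sequence $t_k \downarrow t_0$ with
$$f_P(t_0) \le \lim f_P(t_k) \le \alpha (0, \varphi (P); \phi (A)).$$
Since $f_P$ is monotone, $\lim f_P(t_k)$ equals the right limit $f_P(t_0+)$. Lemma \ref{kdajbize} with the sequence $(A_k)$ gives an increasing sequence $s_k \uparrow t_0$, $s_k<t_0$, with
$$f_P(t_0) \ge \lim f_P(s_k) \ge \alpha (0, \varphi (P); \phi (A)),$$
and $\lim f_P(s_k)$ equals the left limit $f_P(t_0-)$.

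Chaining these gives
$$\alpha(0,\varphi(P);\phi(A)) \le f_P(t_0-) \le f_P(t_0) \le f_P(t_0+) \le \alpha(0,\varphi(P);\phi(A)).$$
So all these quantities coincide. This yields both continuity of $f_P$ at $t_0$ (left limit = value = right limit) and the stated formula.

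There is no real obstacle. The only point needing care is that the limits along the particular sequences produced by the lemmas really are the one-sided limits of $f_P$. This follows from monotonicity of $f_P$ together with $t_k\to t_0$ strictly from the correct side.
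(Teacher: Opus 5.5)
Your proposal is correct. It is exactly the argument the paper has in mind: the paper gives no separate proof and calls the corollary a simple consequence of Lemmas \ref{kdajboze} and \ref{kdajbize}. Your squeeze of the one-sided limits of the monotone function $f_P$ against $\alpha(0,\varphi(P);\phi(A))$ is that argument, written out in full.
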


Projections $P,Q \in P_{n}^1$ are said to be orthogonal, $P \perp Q$, if $PQ=0$.

\begin{corollary}\label{servkia}
Let $n \ge 2$ be an integer and $\phi : [0,I] \to S_n$ an order embedding satisfying $\phi (0) = 0$, $\phi ((1/2)I) = (1/2)I$, and $\phi (I) = I$. Assume that $\phi$ is continuous at $0$, $(1/2)I$, and $I$. Let $\varphi : P_{n}^1 \to P_{n}^1$ 
and $f_P : [0,1] \to [0,1]$, $P \in P_{n}^1$, be as in Corollary \ref{posl2}.
Then for every pair $P,Q \in P_{n}^1$ we have
\begin{itemize}
\item $\phi ((1/2)P) = (1/2)\varphi (P)$,
\item $P \perp Q \Rightarrow \phi (P) \perp \phi (Q)$, 
\item $P \perp Q \Rightarrow \phi (P + Q) = \varphi(P) + \varphi (Q)$, and
\item $P \perp Q \Rightarrow \phi ((1/2)P + Q) = (1/2)\varphi(P) + \varphi (Q)$.
\end{itemize}
\end{corollary}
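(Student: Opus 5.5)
The plan is to derive all four items from Corollary \ref{kdajbuze}, Corollary \ref{posl2}, and the "dual" map $\psi(X)=I-\phi(I-X)$ used in Corollary \ref{posle1}. I expect one step, the identification $\psi(Q)=\varphi(Q)$ in the third paragraph, to be the real difficulty; I have not fully closed it.

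\emph{First item.} Take $A=(1/2)I$, $A_k=(1/2-1/(k+2))I$ and $B_k=(1/2+1/(k+2))I$. By continuity of $\phi$ at $(1/2)I$ we have $\phi(A_k)\to\phi(A)$ and $\phi(B_k)\to\phi(A)$. For every $P\in P_n^1$ we have $t_0=\alpha(0,P;(1/2)I)=1/2$. Corollary \ref{kdajbuze} then gives
$$f_P(1/2)=\alpha\bigl(0,\varphi(P);(1/2)I\bigr)=1/2,$$
that is, $\phi((1/2)P)=(1/2)\varphi(P)$.

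\emph{Setting up the dual map.} The map $\psi(X)=I-\phi(I-X)$ satisfies exactly the same hypotheses as $\phi$, including $\psi((1/2)I)=(1/2)I$ and continuity at $0$, $(1/2)I$, $I$. So Corollary \ref{posl2} and the first item apply to $\psi$ as well. Writing $\psi(Q)=R_Q\in P_n^1$, this gives
$$\phi(I-Q)=I-R_Q \quad\text{and}\quad \phi\bigl(I-(1/2)Q\bigr)=I-(1/2)R_Q .$$
We will repeatedly use the following fact: if $S=xx^t$ is a rank one projection and $S\le I-cR$ with $c>0$ and $R\in P_n^1$, then $\langle Rx,x\rangle=0$, i.e.\ $S\perp R$.

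Now let $P\perp Q$. Then $P\le I-(1/2)Q$, since the difference is $(1/2)Q+(I-P-Q)\ge 0$. Hence $\varphi(P)\le I-(1/2)R_Q$, and so $\varphi(P)\perp R_Q$. The second item will follow once we know $R_Q=\varphi(Q)$, i.e.\ $\psi$ and $\phi$ agree on rank one projections.

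\emph{The main obstacle: showing $R_Q=\varphi(Q)$.} I would first try an $\alpha$-comparison. Compute $\alpha(0,Q;X)$ for suitable $X$ built from $Q$ and $I$, such as $X=I-(1/2)Q$ or $(1/2)(I+Q)$, and transport it through $\phi$ and $\psi$ using Lemma \ref{less} and Corollary \ref{kdajbuze}. Since $Q\le I-(1/2)Q$ is false while $(1/2)Q\le I-(1/2)Q$ holds, the extremal value $\alpha(0,\varphi(Q);I-(1/2)R_Q)$ should pin down $\varphi(Q)$ relative to $R_Q$. A backup is an orthogonality-set argument. Every $S\in P_n^1$ with $S\perp Q$ satisfies $S\le I-Q$, so $\varphi(S)\perp R_Q$. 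By the symmetric argument for $\psi$, every $S\perp Q$ also satisfies $\psi(S)\perp\varphi(Q)$. Combining these for $n\ge2$, and using injectivity of $\varphi$ and $\psi$, should force $R_Q=\varphi(Q)$.

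\emph{Third and fourth items.} Assuming $R_Q=\varphi(Q)$, the third item goes as follows. The matrix $\phi(P+Q)$ has rank $2$ by Lemma \ref{nicce}, satisfies $\phi(P+Q)\le I$, and dominates the orthogonal rank one projections $\varphi(P)$ and $\varphi(Q)$. If $x$ is a unit vector with $\langle Ax,x\rangle=1$ and $A\le I$, then $Ax=x$. So $\phi(P+Q)$ fixes both ranges, and rank $2$ then forces $\phi(P+Q)=\varphi(P)+\varphi(Q)$.

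For the fourth item, $X=(1/2)P+Q$ has rank $2$. It satisfies $(1/2)P\le X$, $Q\le X$, and $X\le I-(1/2)P$. Applying $\phi$ gives
$$(1/2)\varphi(P),\ \varphi(Q)\ \le\ \phi(X)\ \le\ I-(1/2)\varphi(P).$$
By the same eigenvector argument, $\phi(X)$ fixes the range of $\varphi(Q)$. Its compression to the range of $\varphi(P)$ lies between $1/2$ and $1/2$, so it equals $1/2$. Since $\phi(X)$ has rank $2$, we conclude $\phi(X)=(1/2)\varphi(P)+\varphi(Q)$.
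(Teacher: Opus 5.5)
\textbf{Gap: the step $R_Q=\varphi(Q)$ is not established.} You flag this step as open, and it carries the whole content of items two to four. Neither of your suggestions closes it as written.

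The orthogonality-set backup cannot work, because it never uses $\phi((1/2)I)=(1/2)I$. All it uses is injectivity and the relation $S\perp Q\Rightarrow\varphi(S)\perp\psi(Q)$. Take the automorphism $\phi_T$ of Lemma~\ref{MZ}: there $\varphi(xx^t)$ is the projection onto $Tx$ and $\psi(xx^t)$ is the projection onto $(T^t)^{-1}x$. So the relation holds for every invertible $T$, while $\varphi=\psi$ only when $T^tT$ is scalar.

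The $\alpha$-comparison is the right idea, but the two facts you cite are too weak. Write $\varphi(Q)=uu^t$. Since $(I-(1/2)R_Q)^{-1}=I+R_Q$, Lemma~\ref{sformula} gives $\alpha:=\alpha(0,\varphi(Q);I-(1/2)R_Q)=1/(1+\langle R_Qu,u\rangle)$. Hence $(1/2)Q\le I-(1/2)Q$ and $Q\not\le I-(1/2)Q$ only yield $1/2\le\alpha<1$, which merely says $\varphi(Q)\not\perp R_Q$.

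\textbf{How to close it.} What is missing is the sharp value $\alpha=1/2$. Because $tQ\le I-(1/2)Q$ holds exactly for $t\le 1/2$, you get $f_Q(t)\varphi(Q)\not\le I-(1/2)R_Q$, i.e.\ $f_Q(t)>\alpha$, for every $t\in(1/2,1]$. Corollary~\ref{kdajbuze}, which you already used for the first item, also gives that $f_Q$ is continuous at $1/2$. Letting $t\downarrow 1/2$ then gives $\alpha\le f_Q(1/2)=1/2$. Hence $\langle R_Qu,u\rangle=1$, so $R_Q=\varphi(Q)$.

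With this inserted, your treatment of items two to four is correct. One small repair in item four: a compression value of $1/2$ alone does not suffice. Use instead that $\phi(X)-(1/2)\varphi(P)\ge 0$ has vanishing quadratic form at a unit vector spanning the image of $\varphi(P)$, so that vector is an eigenvector of $\phi(X)$ with eigenvalue $1/2$.

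\textbf{Comparison with the paper.} The completed route differs from the paper's, which does not use the dual map. The paper shows that $\phi(P+Q)$ and $2\phi((1/2)(P+Q))$ are the same rank-two projection and sandwiches $\phi((1/2)P+Q)$ between them. It then uses the same continuity of $f_P$ at $1/2$ to force $\varphi(P)\perp\varphi(Q)$. Your version has the bonus of giving $\phi(I-Q)=I-\varphi(Q)$ directly for all $n\ge 2$.
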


\begin{proof}

To prove the first item we need to show that
$f_P ((1/2)) = 1/2$. To verify this we apply Corollary \ref{kdajbuze} with $A = (1/2)I$, $A_k = (1/2)I - {1 \over k+2} I$, and $B_k = (1/2)I + {1 \over k+2} I$, $k=1,2,\ldots$ Moreover, $f_P$ is continuous at $1/2$.

Let now $P,Q$ be a pair of orthogonal projections of rank one. We know that $\phi (P + Q)$ is of rank two. Moreover, $\varphi(P), \varphi (Q) \le \phi (P +Q) \le I$. Since  $\varphi(P)$ and $\varphi (Q)$ are projections of rank one satisfying $\varphi (P) \not= \varphi (Q)$ one can easily verify that $\phi (P+Q)$ is a projection of rank two.

In the same way we see that there exists a projection $R$ of rank two such that $\phi ( (1/2) (P+Q)) = (1/2)R$. Because $\phi ( (1/2) (P+Q)) \le \phi (P+Q)$ we conclude that $\phi ( (1/2) (P+Q)) = (1/2)\phi (P+Q)$. And then, clearly
$$
(1/2)\phi (P+Q) \le \phi ((1/2)P +Q) \le \phi (P+Q).
$$
It follows that there exist real numbers $p,q$, $1/2 \le p,q \le 1$, and a pair of orthogonal rank one projections $R_1 , R_2$  such that $R_1 + R_2 = \phi (P+Q)$ and
$$
 \phi ((1/2)P +Q) = pR_1 + qR_2.
$$
Since $\varphi (Q) \le pR_1 + qR_2 \not=\phi (P+Q)$ we see that  one of the real numbers $p,q$ is equal to one. With no loss of generality we may assume that $q=1$
and then
$$
\varphi (Q) \le pR_1 + R_2
$$
and $p < 1$ yield that $R_2 = \varphi (Q)$.
It is an elementary linear algebra exercise to show that already known facts $1/2 \le p$, $f_P (1/2) = 1/2$, the continuity of $f_P$ at $1/2$, and
 $$
f_P (t) \varphi (P) \not\le pR_1 + \varphi (Q), \ \ \ t\in (1/2, 1],
$$
imply that $R_1 = \varphi (P)$ and $p = 1/2$. In particular, $\varphi(P) \perp \varphi(Q)$.

Since $\varphi(P) ,\varphi(Q) \le \phi (P+Q)$ and $\phi (P+Q)$ is a projection of rank two we conclude that  $\phi (P + Q) = \varphi(P) + \varphi (Q)$.
\end{proof}

We will need the following easy technical result \cite[Lemma 2.4]{Sem}. Actually, it is an almost straightforward consequence of Corollary \ref{trvtriv}.

\begin{lemma}\label{mikmik}
Let $n \ge 2$ be an integer, $R \in P_{n}^1$ any projection of rank one, and $s$ a real number, $1/2 < s < 1$. Then there exists a pair of orthogonal rank one projections $P,Q \in P_{n}^1$ such that for every real number $p$ we have
$$
pR \le (1/2) P + Q \iff p \le s.
$$
\end{lemma}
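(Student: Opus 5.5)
The plan is to reduce the statement to the two-dimensional situation of Corollary \ref{trvtriv} and then lift it back to $\mathbb{R}^n$ using Lemma \ref{sformula}.

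\emph{Step 1 (the $2\times 2$ case).} Apply Corollary \ref{trvtriv} with $q = 1/2$, $t = s$, $p = 1$. The hypothesis $0<q<t<p$ holds because $1/2 < s < 1$. This gives rank one projections $P', Q' \in S_2$ such that
$$
s = \alpha\bigl(0, Q'; P' + (1/2)(I-P')\bigr).
$$
Since $B' = P' + (1/2)(I-P')$ is invertible in $S_2$, Lemma \ref{sformula} gives $s = 1/(y^t B'^{-1} y)$, where $y$ is a unit vector spanning $\mathrm{Im}\, Q'$.

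\emph{Step 2 (embedding into $\mathbb{R}^n$).} Let $x$ be a unit vector spanning $\mathrm{Im}\, R$. Because $n \ge 2$, we can choose a two-dimensional subspace $V \subset \mathbb{R}^n$ containing $x$, and a linear isometry $U : \mathbb{R}^2 \to V$ with $Uy = x$.

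Define
$$
Q = U P' U^t, \qquad P = U(I-P')U^t .
$$
These are orthogonal rank one projections in $P_n^1$. Moreover, $B = (1/2)P + Q$ equals $UB'U^t$, acting as $B'$ on $V$ and as $0$ on $V^\perp$. In particular $\mathrm{Im}\, B = V \ni x$, and $B^\dagger = U B'^{-1} U^t$. Hence, by the first item of Lemma \ref{sformula},
$$
\alpha(0,R;B) = \frac{1}{x^t B^\dagger x} = \frac{1}{y^t B'^{-1} y} = s .
$$

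\emph{Step 3 (the equivalence).} The set $\{p \in \mathbb{R} : pR \le B\}$ is convex, since $B - pR$ is affine in $p$ and the positive cone is convex. It contains every $p \le 0$, because $B \ge 0$ and $-pR \ge 0$. By the definition of $\alpha$, its maximum is $\alpha(0,R;B) = s$. Therefore this set is exactly $(-\infty, s]$, which is the claimed equivalence $pR \le (1/2)P + Q \iff p \le s$.

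No step is a genuine obstacle. The only point needing care is that $B$ is singular when $n > 2$. This is why the isometry must place $x$ inside $\mathrm{Im}\, B = V$, and why the Moore--Penrose form of Lemma \ref{sformula}, rather than an inverse, is used to compute $\alpha$.
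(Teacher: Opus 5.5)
Your proof is correct and takes essentially the route the paper intends. The paper gives no proof of its own: it cites \cite[Lemma 2.4]{Sem} and remarks that the lemma is an almost straightforward consequence of Corollary \ref{trvtriv}. You supply exactly those details: you apply the corollary with $q=1/2$, $t=s$, $p=1$, transplant the $2\times 2$ configuration isometrically onto a plane containing the image of $R$, evaluate $\alpha$ through the Moore--Penrose form of Lemma \ref{sformula}, and use convexity to obtain the equivalence for all real $p$.
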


\begin{proposition}\label{nice}
Let $n \ge 3$ be an integer and $\phi : [0,I] \to S_n$ be an order embedding satisfying $\phi (0) = 0$, $\phi ((1/2)I) = (1/2)I$, and $\phi (I) = I$. Assume that $\phi$ is continuous at $0$, $(1/2)I$, and $I$. Then there exists an orthogonal $n \times n$ matrix $O$ such that
$\phi (X) = OXO^t$ for every $X \in [0,I]$.
\end{proposition}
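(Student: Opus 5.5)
The plan is to first identify the induced map on rank one projections, then show it comes from an orthogonal matrix, and finally use the $\alpha$-machinery to extend to all of $[0,I]$.

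\textbf{Step 1: $\varphi$ is an orthogonality preserver.} By Corollary \ref{posl2}, $\phi$ induces an injective map $\varphi : P_{n}^1 \to P_{n}^1$. By Corollary \ref{servkia}, $\varphi$ preserves orthogonality, and $\phi(P+Q)=\varphi(P)+\varphi(Q)$ whenever $P \perp Q$.

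\textbf{Step 2: $\varphi$ preserves transition probabilities.} I want to show that ${\rm tr}\,(\varphi(P)\varphi(R)) = {\rm tr}\,(PR)$ for all $P,R\in P_n^1$. Fix $R$ and $s\in(1/2,1)$, and take orthogonal $P,Q$ as in Lemma \ref{mikmik}. Then $pR\le (1/2)P+Q$ holds exactly when $p\le s$. Applying $\phi$ and Corollary \ref{servkia}, and using that $\phi(pR)=f_R(p)\varphi(R)$, we get
\[
f_R(p)\varphi(R) \le (1/2)\varphi(P)+\varphi(Q) \iff p \le s .
\]
Because $\phi$ is an embedding, this gives $f_R(s)=\alpha(0,\varphi(R);(1/2)\varphi(P)+\varphi(Q))$ on the two-dimensional range, together with a strict failure for $p>s$. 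The $\alpha$-value is computed by Lemma \ref{sformula}: it equals $1/(2a+b)$ with $a={\rm tr}\,(\varphi(P)\varphi(R))$ and $b={\rm tr}\,(\varphi(Q)\varphi(R))$. The original configuration gives the analogous relation $s = 1/(2{\rm tr}\,(PR)+{\rm tr}\,(QR))$.

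To pin down $f_R$, I would first use $f_R(1/2)=1/2$, the monotonicity of $f_R$, and the strict inequality for $p>s$ to show that $f_R$ is continuous. I would then vary the choice of $P,Q$ and compare the two constraints to force $f_R(t)=t$.

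Once $f_R$ is the identity, we have $\phi(tR)=t\varphi(R)$. Running the same argument with $\alpha$ evaluated on a general matrix of the form $(1/2)P+Q$ yields the equality of traces, so $\varphi$ preserves ${\rm tr}\,(PR)$. Wigner's theorem in the real case, valid here since $n\ge 3$ (or alternatively Uhlhorn's theorem, which needs only orthogonality preservation in both directions), then gives an orthogonal $O$ with $\varphi(P)=OPO^t$.

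\textbf{Step 3: extension to $[0,I]$.} Replace $\phi$ by $X\mapsto O^t\phi(X)O$, so that $\phi(tP)=tP$ for all $P$ and $t$. For $A\in(0,I)$ at which $\phi$ is continuous from both sides along suitable sequences $A_k<A<B_k$, Corollary \ref{kdajbuze} gives $\alpha(0,P;\phi(A))=f_P(\alpha(0,P;A))=\alpha(0,P;A)$ for every $P$. Corollary \ref{equal1} then yields $\phi(A)=A$.

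For a general $X\in[0,I]$, I would argue via order: $tP\le X$ if and only if $tP\le\phi(X)$. This says $\alpha(0,P;X)=\alpha(0,P;\phi(X))$ for every $P$, and Corollary \ref{equal1} again gives $\phi(X)=X$. This route actually bypasses the continuity issue entirely, since it uses only $\phi(tP)=tP$ together with the embedding property. It requires $\phi(X)\ge0$, which holds by monotonicity since $\phi(0)=0$.

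\textbf{Main obstacle.} I expect the hardest part to be Step 2: showing that $f_R$ is the identity and that transition probabilities are preserved. The difficulty is disentangling the unknown $f_R$ from the unknown angles. My first attempt would exploit the freedom in Lemma \ref{mikmik}, namely that a whole family of pairs $(P,Q)$ realizes the same $s$, and combine this with the known value $f_R(1/2)=1/2$.
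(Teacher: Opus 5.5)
\textbf{Gap 1: Step 2 is not carried out, and the sketched mechanism cannot carry it out.}
Lemma \ref{mikmik} and Corollary \ref{servkia} only give you, for $R$ in the span of orthogonal $P,Q$, the coupled relation $f_R(p)\le 1/(1+{\rm tr}\,(\varphi(P)\varphi(R)))\iff p\le 1/(1+{\rm tr}\,(PR))$. This is one equation in two unknowns: $f_R$ on $[1/2,1]$, and the way $\varphi$ distorts transition probabilities.

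Varying $(P,Q)$ at fixed $s$, and using $f_R(1/2)=1/2$ (which is just the trivial case $P=R$), shows at best that ${\rm tr}\,(\varphi(P)\varphi(R))=h({\rm tr}\,(PR))$ for some monotone $h$ with $h(0)=0$, $h(1)=1$. Any such $h$, paired with $f_R(s)=1/(1+h(1/s-1))$, satisfies every constraint you list. So nothing in your outline forces $h={\rm id}$, and the chain ``$f_R={\rm id}\Rightarrow$ traces preserved $\Rightarrow$ Wigner'' never starts. Your continuity claim for $f_R$ is likewise unsupported: you only get $f_R(s)\le\alpha\le\lim_{p\downarrow s}f_R(p)$, with $\alpha$ itself unknown.

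The missing idea is that Step 1 already determines $\varphi$ when $n\ge3$.
\begin{itemize}
\item Given $R\ne Q$, choose mutually orthogonal $P_1,\ldots,P_{n-2}\perp R,Q$.
\item Every $P$ with ${\rm Im}\,P\subset{\rm Im}\,R+{\rm Im}\,Q$ is orthogonal to all $P_j$. Hence $\varphi(P),\varphi(R),\varphi(Q)$ lie in the two-dimensional complement of the mutually orthogonal $\varphi(P_j)$.
\item By injectivity, $\varphi(R)\ne\varphi(Q)$, so ${\rm Im}\,\varphi(P)\subset{\rm Im}\,\varphi(R)+{\rm Im}\,\varphi(Q)$.
\item Faure's non-surjective fundamental theorem of projective geometry \cite{Fa} then yields $\varphi(P)=OPO^t$, since semilinear maps over $\mathbb{R}$ are linear and orthogonality forces a multiple of an orthogonal matrix.
\end{itemize}
This is where the paper uses $n\ge 3$. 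Wigner's theorem does not need $n\ge3$. Uhlhorn's theorem in its usual form needs surjectivity and orthogonality preservation in both directions, and you have neither.

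Only after normalizing to $\varphi={\rm id}$ does Lemma \ref{mikmik} give $f_R(p)\le s\iff p\le s$, hence $f_R(s)=s$ on $[1/2,1]$, with no continuity issue. Within your own framework you could instead try to exploit that orthonormal frames go to orthonormal frames, so $\sum_i h({\rm tr}\,(PP_i))=1$; for $n\ge3$ this gives Cauchy's equation for $h$. Some such extra ingredient is indispensable.

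\textbf{Gap 2: Step 2 cannot reach $t<1/2$.}
Lemma \ref{mikmik} only concerns $s\in(1/2,1)$, and the images of $cP+Q$ for $c<1/2$ are unknown. So Step 2 at best yields $\phi(tR)=t\varphi(R)$ for $t\in[1/2,1]$, whereas Step 3 needs all $t\in[0,1]$. The paper closes this in three moves, after normalizing to $\varphi={\rm id}$.
\begin{itemize}
\item It shows $\phi(I-P)=I-P$. Every $Q\perp P$ satisfies $Q\le I-P$, giving $I-P\le\phi(I-P)\le I$. Continuity at $0$ then rules out an extra $tP$ summand.
\item It applies the $[1/2,1]$ result to $\eta(X)=I-\phi(I-X)$. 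This map satisfies the same hypotheses and fixes every $P\in P_{n}^1$, so $\phi(I-tR)=I-tR$ for $t\in[1/2,1]$.
\item It reads off $f_R(s)=s$ on $[0,1/2]$ from $pR\le I-(1-s)R\iff p\le s$.
\end{itemize}

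Your closing order argument ($tP\le X\iff tP\le\phi(X)$, then Corollary \ref{equal1}) is correct and is exactly how the paper concludes. The preliminary part of Step 3 via Corollary \ref{kdajbuze} is then superfluous.
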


\begin{proof}
We know that there exists an orthogonality preserving injective map $\varphi : P_{n}^1 \to P_{n}^1$ such that for every $P \in P_{n}^1$ we have
$\phi (P) = \varphi (P)$. We claim that there exists an $n \times n$ orthogonal matrix $O$ such that $\varphi (P) = OPO^t$ for every $P \in  P_{n}^1$.
This follows directly from \cite[Theorem 4.1]{Fa} once we verify that for every triple $P,Q,R \in P_{n}^1$ we have
$$
{\rm Im}\, P \subset {\rm Im}\, R + {\rm Im}\, Q \Rightarrow {\rm Im}\, \varphi (P) \subset {\rm Im}\, \varphi (R) + {\rm Im}\, \varphi (Q).
$$
The verification of this implication is trivial in the special case when $R=Q$. Thus, assume that $R \not=Q$. Then there exist $n-2$ pairwise orthogonal rank one projections $P_1 , \ldots , P_{n-2}$ such that
$$
R,Q \perp P_j , \ \ \ j= 1, \ldots , n-2,
$$ 
and consequently, $P \perp P_j$,  $j= 1, \ldots , n-2$. It follows that
$$
\varphi (P), \varphi (R), \varphi (Q) \perp \varphi (P_j ), \ \ \ j= 1, \ldots , n-2,
$$ 
and we also know that $\varphi (P_i) \perp \varphi (P_j)$ whenever $i \not= j$. We further know that $\varphi$ is injective and therefore $\varphi (R) \not=\varphi (Q)$. It follows that
$$
{\rm Im}\, \varphi (P) \subset {\rm Im}\, \varphi (R) + {\rm Im}\, \varphi (Q),
$$
as desired.

After replacing the map $\phi$ by the map $X \mapsto O^t \phi(X) O$ we can assume with no loss of generality that $\phi (P) = P$ for every $P \in P_{n}^1$. It follows from Corollary \ref{servkia} that
$\phi ((1/2) P + Q) = (1/2)P+Q$ for every pair $P,Q \in P_{n}^1$ satisfying $P \perp Q$.

The proof will be completed by using exactly the same arguments as in the proof of \cite[Theorem 3.1]{Sem}. As the rest of the proof is rather short we will include it here for the sake of completeness.

In the next step we verify that
\begin{equation}\label{oppos}
\phi (I-P) = I-P
\end{equation}
 for every $P \in P_{n}^1$.
To show this we notice that every rank one projection $Q$ that is orthogonal to $P$ satisfies $Q \le I -P$ and consequently, $Q \le \phi (I-P)$. From here we deduce that $I-P \le \phi (I-P)$. It follows that $\phi (I-P) = (I-P) + tP$ for some $t \in [0,1]$. For any real $s \in (0,1]$ we have $sP \not\le I-P$. Thus, for every real $p \in (0,1]$ we have $pP \not\le \phi (I-P)$ (here, we have used the continuity of $\phi$ at $0$) which yields  that $t=0$.

Lemma \ref{mikmik} tells that  for every $R \in P_{n}^1$ and every real $t \in [1/2,1]$ 
we have $\phi (tR) = tR$.

We define a new map $\eta : [0,I] \to [0,I]$ by $\eta (X) = I - \phi (I-X)$, $X \in [0,I]$. Obviously, $\eta$ is an order embedding of $[0,I]$ satisfying $\eta(qI) = qI$, $q=0,1/2,1$. From (\ref{oppos}) we infer that 
$\eta (P) = P$ for every $P \in P_{n}^1$. But then we already know that $\eta (tR) = tR$ for all real $t \in [1/2,1]$ and $R \in P_{n}^1$. Hence, $\phi (I -tR) = I -tR$ 
 for every $R \in P_{n}^1$ and every real $t \in [1/2,1]$. 

 We know that there exists a strictly increasing function $f : [0 , 1] \to [0 , 1] $ (depending on $R$) such that $\phi (tR) = f(t) R$ for every $R \in P_{n}^1$ and every real $t \in [0,1]$.

Let $s \in [0, 1/2]$ and take any $R \in P_{n}^1$.
Then $t = 1-s \in [1/2 , 1]$. For every real number $p \in [0,1]$ the inequality $p \le s$ is equivalent to $pR \le I -tR$ which happens if and only if $f (p) R \le I - tR$. Hence, for 
every real $p \in [0,1]$ we have $p \le s$ if and only if $f(p) \le 1-t =s$. Thus, $f (s) = s$.

We have shown that $\phi (tR) = tR$ for every real $t \in [0,1]$ and every $R \in P_{n}^1$. 
The proof is completed by a direct use of Corollary \ref{equal1}.
\end{proof}

\begin{proposition}\label{nice2}
Let $0$ and $I$ stand for $2\times 2$ zero matrix and identity matrix, respectively, and let  $\phi : [0,I] \to S_2$ be an order embedding satisfying $\phi (0) = 0$, $\phi ((1/2)I) = (1/2)I$, and $\phi (I) = I$. Assume that $\phi$ is continuous at $0$, $(1/2)I$, and $I$. Then there exists an orthogonal $2 \times 2$ matrix $O$ such that
$\phi (X) = OXO^t$ for every $X \in [0,I]$.
\end{proposition}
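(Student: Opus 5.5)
The plan is to follow the proof of Proposition \ref{nice} step by step. The only place where $n \ge 3$ was used is the identification of $\varphi$ on $P_{2}^1$ via \cite[Theorem 4.1]{Fa}. For $n=2$ that step must be replaced, and everything after it ($\phi(I-P)=I-P$, Lemma \ref{mikmik}, the map $\eta$, and Corollary \ref{equal1}) carries over unchanged once we know $\varphi(P)=OPO^t$ for all $P\in P_{2}^1$.

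\textbf{Setup.} By Corollaries \ref{posl2} and \ref{servkia} we have an injective map $\varphi:P_{2}^1\to P_{2}^1$ that preserves orthogonality. In dimension two each rank one projection has a unique orthogonal partner, so $\varphi(I-P)=I-\varphi(P)$. We identify $P_{2}^1$ with the circle $\mathbb{R}/\pi\mathbb{Z}$ of lines through the origin, writing $P_\theta$ for projection onto the span of $(\cos\theta,\sin\theta)^t$. Then $\varphi$ becomes an injection $g$ of this circle commuting with the shift $\theta\mapsto\theta+\pi/2$. Orthogonality alone allows $g$ to be wild, so we need metric information coming from the order.

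\textbf{Metric information.} Corollary \ref{servkia} gives $\phi((1/2)P+Q)=(1/2)\varphi(P)+\varphi(Q)$ for $P\perp Q$, and $\phi(tR)=f_R(t)\varphi(R)$. For $R$ and $s\in(1/2,1)$, Lemma \ref{mikmik} and Corollary \ref{trvtriv} say that $\sup\{p: pR\le (1/2)P+Q\}=s$, where the angle between $R$ and $P$ is determined by $s$ through
\[
\frac{1}{s}=\mathrm{tr}\bigl((2P+Q)R\bigr)=1+\mathrm{tr}(PR).
\]
Order embedding gives $pR\le (1/2)P+Q \iff f_R(p)\varphi(R)\le (1/2)\varphi(P)+\varphi(Q)$. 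Hence $f_R(p)\le \sigma$ if and only if $p\le s$, where $1/\sigma=1+\mathrm{tr}(\varphi(P)\varphi(R))$.

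Now fix $R$ and vary $P$. As $\mathrm{tr}(PR)$ ranges over $(0,1)$, $s$ ranges over $(1/2,1)$. This forces $f_R$ to be determined on $(1/2,1)$ by $\mathrm{tr}(\varphi(P)\varphi(R))$: if $f_R$ is continuous at $s$, then $f_R(s)=\sigma$. Two projections $P,P'$ at the same angle from $R$ give the same $s$, hence the same $\sigma$ up to the jumps of the monotone $f_R$. Since $\varphi$ is injective and the set of admissible $\sigma$ values is ordered the same way as the $s$ values, we get that $h_R(\theta)=\mathrm{tr}(\varphi(P_\theta)\varphi(R))$ is a monotone function of $\mathrm{tr}(P_\theta R)$, with no gaps in the range outside countably many points. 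From this, $g$ is monotone on arcs, hence continuous (a monotone injection of the circle with dense image, using the shift symmetry). It is then a homeomorphism of the circle commuting with rotation by $\pi/2$, and $\mathrm{tr}(\varphi(P)\varphi(R))=F(\mathrm{tr}(PR))$ for a single increasing function $F$, the same for all $R$ by the same reasoning applied symmetrically.

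\textbf{Pinning down $F$.} Next we show $F$ is the identity, so that $g$ preserves angles and is therefore a rotation or reflection, i.e. $\varphi(P)=OPO^t$. For this we use the additivity of angles on the circle: for three lines at angles $0,a,a+b$ we have compatibility relations among $\cos^2 a$, $\cos^2 b$ and $\cos^2(a+b)$. Together with $F(0)=0$, $F(1)=1$ (injectivity) and $F(1-x)=1-F(x)$ (from $\varphi(I-P)=I-\varphi(P)$), this leads to a Cauchy-type functional equation for $\theta\mapsto$ the angle of $g$. A monotone solution of that equation must be linear, and commuting with the $\pi/2$ shift forces the slope to be $\pm1$.

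The main obstacle is the metric step: extracting continuity and monotonicity of $g$ from the order relations, since $f_R$ may have countably many jumps and Fa's theorem is unavailable. If that argument gets tangled, the fallback is to prove directly that $\varphi$ preserves $\mathrm{tr}(PR)$ on a dense set, using continuity points of $f_R$. Then $g$ is an isometry of a dense subset of the circle, which extends to an isometry, and Corollary \ref{equal} finishes the argument.
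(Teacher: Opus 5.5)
Your reduction is right. Everything in the proof of Proposition~\ref{nice} after the appeal to \cite{Fa} works verbatim for $n=2$ once $\varphi(P)=OPO^t$, and your computation $f_R(p)\le\sigma\iff p\le s$ is correct. It gives $f_R(s)\le\sigma\le\lim_{p\downarrow s}f_R(p)$. So for fixed $R$ the number ${\rm tr}\,(\varphi(P)\varphi(R))$ is a strictly increasing function of ${\rm tr}\,(PR)$, possibly two-valued at countably many points.

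The gap is the next step, which carries all the weight. ``A monotone injection of the circle with dense image'' presupposes that $\varphi(P_{2}^1)$ is dense, and nothing you have derived gives this; it is essentially the surjectivity of $\varphi$, which the paper has to work for. The phrase ``no gaps in the range outside countably many points'' holds for every monotone function and so rules out nothing. Monotonicity on arcs is only asserted. The claim that one $F$ serves all $R$ is also unsupported: $F_R$ is read off from $f_R$, and the symmetry of ${\rm tr}\,(\varphi(P)\varphi(R))$ only yields $F_R(x)=F_P(x)$ at $x={\rm tr}\,(PR)$. The paper needs a separate argument (comparing two spectral decompositions of $\phi(pI)$) to make $f_P$ independent of $P$. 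Your fallback is circular: continuity points of $f_R$ give ${\rm tr}\,(\varphi(P)\varphi(R))=1/f_R(s)-1$, which equals ${\rm tr}\,(PR)$ only if $f_R(s)=s$, and that is available only after $\varphi$ has been identified.

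The relation you already have is enough if you use it on both sides of the base point. Let $d$ be the angular distance on $\mathbb{R}/\pi\mathbb{Z}$. The relation says that for every $\beta$ and $0\le t<t'\le\pi/2$, both numbers $d(g(\beta\pm t),g(\beta))$ are smaller than both numbers $d(g(\beta\pm t'),g(\beta))$.

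First, monotonicity. Normalize $g(0)=0$, so $g(\pi/2)=\pi/2$, and let $\rho_i=d(g(\theta_i),0)$. Suppose $0<\theta_1<\theta_2<\pi/2$ had images on opposite sides of $0$. Comparing $\theta_2$ with $\pi/2$ as seen from $\theta_1$ forces $d(g(\theta_1),g(\theta_2))=\rho_1+\rho_2$. Comparing $\theta_1$ with $0$ as seen from $\theta_2$ forces this distance to be less than $\rho_2$, a contradiction. Hence, after a reflection, $g$ lifts to a strictly increasing $h$ with $h(\theta+\pi/2)=h(\theta)+\pi/2$.

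Second, continuity and rigidity. Matching one-sided limits in the interleaving gives $h((\beta+t)^+)+h((\beta-t)^-)=2h(\beta)=h((\beta+t)^-)+h((\beta-t)^+)$. So $h$ has the same jump at every point, and since a monotone function has only countably many jumps, it has none. Then $h(\beta+t)+h(\beta-t)=2h(\beta)$ forces $h$ to be affine with slope $1$, i.e.\ $\varphi(P)=OPO^t$.

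This completes your route directly and avoids \cite{Sem}. The paper instead proves that all $f_P$ equal one continuous $f$, deduces that $\varphi$ is continuous and hence surjective, concludes that $\phi$ is a bijection of $[0,I]$, and then invokes the main theorem of \cite{Sem}.
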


\begin{proof}
By Corollaries \ref{posl2} and \ref{servkia}  
there exists an injective map $\varphi : P_{2}^1 \to P_{2}^1$ such that for every $P\in P_{2}^1$ we have
$\phi (P) = \varphi (P)$, $\phi ((1/2)P) = (1/2)\varphi (P)$, $\phi (I-P) = I - \phi (P)$,  and  $\phi ((1/2)P + (I-P)) = (1/2)\varphi(P) + (I-\varphi (P))$.
Moreover,  for each $P\in P_{2}^1$ there exists a strictly increasing function $f_P : [0,1] \to [0,1]$ such that $\phi (tP) = f_P (t) \varphi (P)$, $t \in  [0,1]$.
We will often use the notation $P^\perp = I-P$.

Let $\mathcal{C} \subset  [0,1]$ be the set of all points $s \in  [0,1]$ such that the mapping $t \mapsto \phi (tI)$, $t \in  [0,1]$, is continuous at $s$. Corollary \ref{kdajbuze} implies that for all $t \in \mathcal{C} \cap (0,1)$ and $P\in P_{n}^1$ the function $f_P$ is continuous at $t$ and
\begin{equation}\label{mraknaoc22}
f_P (t) = \alpha (0, \varphi (P); \phi (tI) ).
\end{equation}
It is trivial to see that $f_P (0) = 0 = \alpha (0, \varphi (P); 0 ) = \alpha (0, \varphi (P); \phi (0) )$ and that $f_P$ is continuous at $0$. And similarly, 
$f_P (1) = 1 = \alpha (0, \varphi (P); I ) = \alpha (0, \varphi (P); \phi (I) )$ and $f_P$ is continuous at $1$.

Let $P$ be any projection of rank one and $t \in (0,1)$. From $P \le P + tP^\perp$ we get $\varphi (P) = \phi (P) \le \phi (P + tP^\perp) \le I$ which together with $\phi (P + tP^\perp) \not=I, \varphi(P)$ yields that $\phi ( P + tP^\perp ) = \varphi (P) + s \varphi(P)^\perp$ for some $s \in (0,1)$.
If we additionaly assume that $t \in \mathcal{C}$ then the fact that for every $r \in (0,1)$ we have $rP^\perp \le P + tP^\perp \iff r \le t$ implies that $\phi ( P + tP^\perp ) = \varphi (P) + f_{P^\perp} (t) \varphi(P)^\perp$. 

Let $p,q \in \mathcal{C} \setminus \{ 0,1 \}$ and $P \in P_{n}^1$. Then $qP^\perp  \le     pP + q P^\perp \le P + q P^\perp$, and therefore, $f_{P^\perp} (q) \varphi(P)^\perp \le \phi ( pP + q P^\perp) \le \phi (P + q P^\perp) =
\varphi (P) +  f_{P^\perp} (q) \varphi(P)^\perp$, which further yields that $\phi ( pP + q P^\perp  ) = r\varphi (P) +  f_{P^\perp} (q) \varphi(P)^\perp$ for some $r \in (0,1)$. Following the same idea as in the previous paragraph we finally arrive at  $\phi ( pP + q P^\perp ) = f_P (p)\varphi (P) +  f_{P^\perp} (q) \varphi(P)^\perp$.

Let $p$ be any real number from the closed unit interval belonging to $\mathcal{C}$ and $P,Q$ any pair of projections of rank one such that $\varphi (P) \not= \varphi (Q)$ and $\varphi (P) \not= \varphi (Q)^\perp$ (such a pair exists because $\varphi$ is injective).
Then we know that
$$
\phi (pI)= f_P (p)\varphi (P) +  f_{P^\perp} (p) \varphi(P)^\perp
= f_Q (p)\varphi (Q) +  f_{Q^\perp} (p) \varphi(Q)^\perp .
$$
Assume that $f_P (p) \not= f_{P^\perp} (p)$. Then real numbers $f_P (p)$ and $f_{P^\perp} (p)$ are the eigenvalues of $\phi (pI)$ and the images of rank one projections $\varphi (P)$ and $\varphi(P)^\perp$ are the corresponding one-dimensional eigenspaces. Of course, we have $\{ f_P (p) , f_{P^\perp} (p) \} = \{ f_Q (p) , f_{Q^\perp} (p) \}$ and the images of rank one projection $\varphi (Q)$ and $\varphi(Q)^\perp$ are the two one-dimensional eigenspaces of $\phi (pI)$. Consequently, $\varphi (P) = \varphi (Q)$ or $\varphi (P) = \varphi (Q)^\perp$, a contradiction.

It follows that
$$
\phi (pI) = f(p) I , \ \ \ p \in \mathcal{C},
$$
for some  strictly increasing function $f : [0,1] \to [0,1]$, and
$$
\phi (pP + qP^\perp) = f(p) \varphi (P) + f(q) \varphi (P)^\perp
$$
for all $ p,q \in \mathcal{C}$ and every $P \in P_{n}^1$.

Recall that $ [0,1] \setminus \mathcal{C}$ is at most countable. For every $t \in (0,1)$ we can use Corollary \ref{trvtriv} to find $p,q \in (0,1)$ and rank one projections $P,Q$ such  that $q  < t < p$, $p,q \in \mathcal{C}$, and 
$$
t = \alpha (0,Q ; pP + qP^\perp) .
$$
By Corollary \ref{kdajbuze} the function $f_Q = f$ is continuous at $t$. It follows that $f$ is a continuous bijection of $[0,1]$ onto itself and we have
$$
\phi (pP + qP^\perp) = f(p) \varphi (P) + f(q) \varphi (P)^\perp
$$
for all $ p,q \in [0,1]$ and every $P \in P_{n}^1$. In the next step we will prove that $\varphi$ is surjective. This will together with the above formula imply that $\phi$ is a bijection of $[0,I]$ onto itself. 
The result is then a straightforward consequence of the main theorem in \cite{Sem}.

In order to prove the surjectivity of $\varphi$ we only need to verify that $\varphi : P_{2}^1 \to P_{2}^1$ is continuous. The surjectivity then follows from the injectivity of $\varphi$ and the well-known fact that the projective space $P_{2}^1$ is homeomorphic to the circle. 

So, assume that $(P_k) \subset P_{2}^1$ is a convergent sequence with $\lim P_k = P$. The compactness of the circle yields the existence of a cluster point $Q$ of the sequence  $(\varphi (P_k))$. All we need to show
is that this cluster point is unique (and therefore, it is a limit of this sequence) and $Q = \varphi (P)$. 

By passing to a subsequence we may assume that $Q = \lim \varphi (P_k)$.
Clearly,
$$
1 = \alpha (0,P; P + (1/2) P^\perp).
$$
Using $f(1) =1$, the continuity of $f$, and Lemma \ref{jztzavr} we see that
$$
1 = f( \alpha (0,P; P + (1/2) P^\perp)) = \lim_{k \to \infty} f( \alpha (0,P_k; P + (1/2) P^\perp)).
$$
It follows then from Lemma \ref{jztzb} that
$$
1 =  \lim_{k \to \infty} \left( \lim_{m \to \infty} f( \alpha (0,P_k; (1 - (1/m))P + (1/2) P^\perp)) \right).
$$
By Corollary \ref{kdajbuze} we have
$$
1 =  \lim_{k \to \infty} \left( \lim_{m \to \infty}  \alpha (0, \varphi (P_k); \phi((1 - (1/m))P + (1/2) P^\perp)) \right) .
$$
From here we deduce that
$$
1 =  \lim_{k \to \infty} \left( \lim_{m \to \infty}  \alpha (0, \varphi (P_k); f(1 - (1/m))\varphi(P) + (1/2) \varphi (P)^\perp) \right)
$$
$$
 =  \lim_{k \to \infty}  \alpha (0, \varphi (P_k); \varphi(P) + (1/2) \varphi (P)^\perp) = \alpha (0, Q; \varphi(P) + (1/2) \varphi (P)^\perp).
$$
It follows that $Q = \varphi (P)$, as desired.
\end{proof}

Our next goal is to prove Proposition \ref{predmain}. We will repeadetly use the well-known fact that if a matrix $BA + I$ is invertible for some square matrices $A,B$ then $AB + I$ is invertible, too. Indeed, one can easily verify that $I- A(BA + I)^{-1} B$ is the inverse of $AB + I$. We will also need the following well-known easy fact. Assume that $A,B \in S_n$ and $A \le B$. Let $j,k$ be integers, $1 \le j,k \le n$. If $A$ has $j$ positive eigenvalues then $B$ has at least $j$ positive eigenvalues.  Here, of course, we count eigenvalues with their mulitplicities. If $B$ has $k$ negative eigenvalues then $A$ has at least  $k$ negative eigenvalues.

We start with a trivial lemma.

\begin{lemma}\label{trivialn}
Let $n \ge 2$, $\U \subset S_n$ be a matrix domain, and $Z \in \U$. Denote by $\mathcal{W}$ the set of all elments $X \in \U$ with the property that we can find a positive integer $m$ and matrices $A_1 , \ldots , A_m \in \U$ and $B_1 , \ldots B_m \in \U$ such that
\begin{itemize}
\item $A_j < B_j$ and $[A_j , B_j] \subset \U$, $j=1, \ldots , m$, and
\item the intersection $ [A_j , B_j] \cap  [A_{j+1} , B_{j+1}]$ has a nonempty interior, $j=1, \ldots, m-1$, and
\item $Z \in  (A_1 , B_1)$ and $X \in  (A_m , B_m)$.
\end{itemize}
Then $\U = \mathcal{W}$.
\end{lemma}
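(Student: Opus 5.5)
The plan is the standard connectedness argument: show that $\mathcal{W}$ is nonempty, open, and relatively closed in $\U$. Since $\U$ is connected, this forces $\mathcal{W} = \U$. Throughout we use that, for $X \in S_n$ and $\varepsilon > 0$, the norm ball $D(X,\varepsilon)$ equals the open interval $(X - \varepsilon I, X + \varepsilon I)$. Since $\U$ is open, for each $X \in \U$ we can choose $\varepsilon > 0$ with $[X - \varepsilon I, X + \varepsilon I] \subset D(X, 2\varepsilon) \subset \U$.

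\emph{Nonempty.} Take $\varepsilon$ as above for $X = Z$ and use $m = 1$, $A_1 = Z - \varepsilon I$, $B_1 = Z + \varepsilon I$. Then $A_1 < B_1$, $[A_1,B_1] \subset \U$, and $Z \in (A_1,B_1)$, so $Z \in \mathcal{W}$.

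\emph{Open.} Let $X \in \mathcal{W}$ with chain $[A_j,B_j]$, $j=1,\ldots,m$. Then $X \in (A_m,B_m)$, and this set is open, being the preimage of the open cone of positive definite matrices under continuous maps. Every $Y \in (A_m,B_m)$ is witnessed by the same chain, so $(A_m,B_m) \subset \mathcal{W}$ is a neighbourhood of $X$.

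\emph{Relatively closed.} Let $X \in \U$ lie in the closure of $\mathcal{W}$. Choose $\varepsilon > 0$ with $[X - \varepsilon I, X + \varepsilon I] \subset \U$, and pick $Y \in \mathcal{W}$ with $\|Y - X\| < \varepsilon$. Let $[A_j,B_j]$, $j=1,\ldots,m$, be a chain for $Y$, and append $A_{m+1} = X - \varepsilon I$, $B_{m+1} = X + \varepsilon I$. Clearly $A_{m+1} < B_{m+1}$, $[A_{m+1},B_{m+1}] \subset \U$, and $X \in (A_{m+1},B_{m+1})$. It remains to check that $[A_m,B_m] \cap [A_{m+1},B_{m+1}]$ has nonempty interior. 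The set $(A_m,B_m) \cap (X - \varepsilon I, X + \varepsilon I)$ is open and contains $Y$, so it is nonempty. It is also contained in the intersection of the closed intervals, so that intersection has nonempty interior. Hence $X \in \mathcal{W}$.

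By connectedness, $\mathcal{W} = \U$. There is no real obstacle here. The only point needing care is the overlap condition in the closedness step, and it is handled by the fact that $Y$ lies in both open intervals.
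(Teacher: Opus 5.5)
Your proof is correct and takes the same route as the paper, whose proof only says that $\mathcal{W}$ is nonempty, open, and closed in $\U$ and then invokes connectedness. You have filled in those three verifications correctly, including the overlap condition in the closedness step: the point $Y$ lies in both open intervals, so the intersection of the closed intervals has nonempty interior.
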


\begin{proof}
All we need to do is to verify the trivial statement that $\mathcal{W}$ is nonempty, and both open and closed.
\end{proof}

The following lemma is just a minor modification of an analogous statement in the complex case \cite{MoS}.

\begin{lemma}\label{uzivee}
Let $n \ge 2$ be an integer and $A\in S_n$. Let $\U_A$ be defined as in Introduction. If $X,Y \in \U_A$ and $X < Y$, then $[X, Y] \subset \U_A$. 
\end{lemma}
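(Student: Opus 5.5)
The plan is to show that the whole segment structure stays inside $\mathcal{W}_A$ and then use connectedness. Since $[X,Y]$ is convex, hence connected, and contains $X \in \U_A$, it suffices to prove $[X,Y] \subset \mathcal{W}_A$. Then $[X,Y]$ lies in the connected component of $\mathcal{W}_A$ containing $X$, which is $\U_A$.

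So fix $Z$ with $X \le Z \le Y$. We must show that $ZA + I$ is invertible. First observe that for $X \in \mathcal{W}_A$, invertibility of $XA+I$ is equivalent to invertibility of $AX + I$ (the fact recalled just before Lemma \ref{trivialn}). It is also equivalent to the congruence-type statement that the symmetric matrix $A + AXA$ has no eigenvalue issues on the relevant subspace. A cleaner route is to write the condition via a symmetric pencil. Decompose $\mathbb{R}^n = {\rm Ker}\, A \oplus {\rm Im}\, A$. If $(ZA+I)v = 0$, then $v = -ZAv$. Setting $w = Av$ gives $v = -Zw$, and then $w = -AZw$, so $w\ne 0$ (else $v=0$). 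Taking the inner product with $Zw$ yields
\[
\langle Zw,w\rangle = -\langle AZw, Zw\rangle .
\]
Equivalently, the symmetric matrix $Z + ZAZ$ is degenerate on the vector $w$ in the sense $\langle (Z+ZAZ)w,w\rangle=0$. This alone is not decisive, so I would instead use a homotopy and eigenvalue-counting argument.

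The concrete plan is as follows. On $\U_A$, the Hermitian-type quantity that can be tracked is the inertia of a suitable symmetric matrix. Let $A = A_+ - A_-$, and write $ZA+I$ invertible $\iff I + A^{1/2}$-type factorization, which is only available when $A \ge 0$. To handle a general $A$, I would use the signature matrix $J$ and the factorization $A = C^t J C$ with $J = {\rm diag}(I_p,-I_q,0)$. Then $ZA + I$ is invertible iff $I + JCZC^t$ is invertible on the reduced space, i.e. iff $J + CZC^t$ (restricted to the $(p+q)$-dimensional range) is invertible. Thus with $M(Z) = J_0 + C_0 Z C_0^t$ (a symmetric $(p+q)\times(p+q)$ matrix depending monotonically on $Z$), the domain $\mathcal{W}_A$ is $\{Z : M(Z) \hbox{ invertible}\}$. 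Along any path in $\U_A$ from $0$, no eigenvalue of $M$ crosses zero, so $M(Z)$ has the inertia of $M(0)=J_0$, namely $p$ positive and $q$ negative eigenvalues, for all $Z \in \U_A$.

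Now take $X \le Z \le Y$ with $X,Y \in \U_A$. Then $M(X) \le M(Z) \le M(Y)$. Since $M(X)$ has $p$ positive eigenvalues, $M(Z)$ has at least $p$ positive eigenvalues (by the eigenvalue fact recalled before Lemma \ref{trivialn}). Since $M(Y)$ has $q$ negative eigenvalues, $M(Z)$ has at least $q$ negative eigenvalues. Because the size is $p+q$, $M(Z)$ is invertible, so $Z \in \mathcal{W}_A$.

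The main obstacle is the reduction step: verifying carefully that $ZA+I$ is invertible exactly when $J_0 + C_0ZC_0^t$ is invertible. I would prove this with the determinant identity $\det(I + ZC^tJC) = \det(I + JCZC^t)$ (Sylvester), reducing to the invertible block of $J$ and multiplying by $J_0$. The inertia claim on $\U_A$ then follows from continuity of eigenvalues along paths in the connected set $\U_A$.
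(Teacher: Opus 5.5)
Your final argument is correct and is essentially the paper's proof. The paper writes $A = S|A|$, where $S$ is an invertible signature matrix commuting with $|A|$, and reduces invertibility of $ZA+I$ to invertibility of the $n\times n$ matrix $|A|^{1/2}Z|A|^{1/2}+S$; it then keeps the inertia constant on $\U_A$ by a path argument and sandwiches via $X\le Z\le Y$, which is exactly what you do with $J_0+C_0ZC_0^t$, except that you compress to the range of $A$ (and your exploratory detour through $Z+ZAZ$ is unnecessary).
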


\begin{proof}
By spectral theorem we know that there exist an integer $m$, $0 \le m \le n$, nonnegative real numbers $t_1 , \ldots , t_n$, and pairwise orthogonal rank one projections $P_1 , \ldots , P_n$ such that 
$$
A = \sum_{j=1}^m t_j P_j - \sum_{j=m+1}^n t_j P_j.
$$
Then $| A| = \sum_{j=1}^m t_j P_j + \sum_{j=m+1}^n t_j P_j$ and if we denote $S = \sum_{j=1}^m  P_j - \sum_{j=m+1}^n  P_j$, then $S$ is invertible, $S^2 = I$, and $A = S |A|$. Note that $|A|$ and $S$ commute. For an arbitrary $Z \in S_n$ the matrix $ZA + I = ZS |A| + I $ is invertible if and only if $|A|^{1/2} ZS |A|^{1/2} + I$ is invertible which is equivalent to the invertibility of $|A|^{1/2} Z |A|^{1/2} + S$.

Obviously, the matrix interval $[X, Y]$ is path-connected and therefore, all we need to verify is that for every $Z \in [X, Y]$ the matrix $|A|^{1/2} Z |A|^{1/2} + S$ is invertible. We have
\begin{equation}\label{puna}
|A|^{1/2} X |A|^{1/2} + S \le |A|^{1/2} Z |A|^{1/2} + S \le |A|^{1/2} Y |A|^{1/2} + S .
\end{equation}
It follows from $X,Y \in \U_A$ that there is a path connecting $|A|^{1/2} X |A|^{1/2} + S$ and $|A|^{1/2} 0 |A|^{1/2} + S = S$, as well as a path connecting $|A|^{1/2} Y |A|^{1/2} + S$ and $S$, both paths being contained in the set of invertible matrices in $S_n$. Since $S$ has $m$ positive eigenvalues and $n-m$ negative eigenvalues, the same is true for both 
$|A|^{1/2} X |A|^{1/2} + S$ and $|A|^{1/2} Y |A|^{1/2} + S$. Then the above remark and the left-hand side of (\ref{puna}) yield that $|A|^{1/2} Z |A|^{1/2} + S$ has at least $m$ positive eigenvalues, while 
the right-hand side of (\ref{puna}) implies that $|A|^{1/2} Z |A|^{1/2} + S$ has at least $n-m$ negative eigenvalues. It follows that $|A|^{1/2} Z |A|^{1/2} + S$ has $n$ nonzero eigenvalues and is therefore invertible, as desired.
\end{proof}

\begin{lemma}\label{frusti}
Let $n \ge 2$ be an integer and $A,B \in S_n$. Assume that for  every $P \in P_{n}^1$ there exist positive real numbers $s,t$ such that $B - (A+sP)$ is invertible and   $B - (A+tP)$ is of rank $n-1$. Then $A < B$. 
\end{lemma}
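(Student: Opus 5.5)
Set $C = B - A$. We need to show that $C$ is positive definite. The hypothesis then reads: for every $P = xx^t \in P_n^1$ there are positive $s,t$ with $C - sP$ invertible and ${\rm rank}\,(C - tP) = n-1$. The plan is to use the one-dimensional eigenvalue interlacing for rank one perturbations. The idea is that $t \mapsto C - tP$ is a decreasing path, and its inertia can drop only at finitely many values of $t$.

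\textbf{Step 1: $C$ is invertible.} Suppose $\Ker C \neq 0$.
\begin{itemize}
\item If $C$ has a kernel of dimension $\ge 2$, then $C - tP$ has rank at most $n-1$ for every $t$. This is because a rank one perturbation changes the rank by at most one, and it fixes the subspace $\Ker C \cap x^\perp \neq 0$. So $C - sP$ is never invertible, a contradiction.
\item If $\Ker C$ is spanned by a single vector $v$, choose $x$ with $\langle x,v\rangle = 0$. Then $v \in \Ker (C - tP)$ for every $t$, so again $C - sP$ is never invertible, a contradiction.
\end{itemize}
Hence $C$ is invertible.

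\textbf{Step 2: the determinant formula.} Since $C$ is invertible,
$$\det (C - tP) = \det C \,\bigl(1 - t\, x^t C^{-1} x\bigr).$$
This vanishes for some positive $t$ exactly when $x^t C^{-1} x > 0$, and the vanishing point is then $t = 1/(x^tC^{-1}x)$. Our hypothesis supplies such a $t$ with ${\rm rank}\,(C-tP) = n-1$. Therefore $x^t C^{-1} x > 0$ for every unit vector $x$. So $C^{-1} > 0$, and hence $C > 0$, that is, $A < B$.

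The existence of $s$ is only needed in Step 1, to rule out the singular case. The main subtlety is the case of a one-dimensional kernel, where it matters that we may pick $x \perp v$. One could worry that for $x$ not orthogonal to $v$ the perturbation restores invertibility. But we are free to choose $P$, and for $x\perp v$ the kernel vector $v$ persists. This settles that case, so no obstacle remains beyond this bookkeeping.
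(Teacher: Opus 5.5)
Your proof is correct, and Step 2 is the paper's argument. The paper phrases the singularity of $B-A-txx^t$ as the rank one matrix $t(B-A)^{-1}xx^t$ having trace $1$, which is your identity $\det(C-tP)=\det C\,(1-t\,x^tC^{-1}x)$ in another form.

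The only real difference is Step 1, the invertibility of $C$:
\begin{itemize}
\item The paper fixes an arbitrary $P$. Since $P$ is orthogonally similar to $E_{11}$, the function $r\mapsto\det(C-rP)=cr+d$ is affine. It is nonzero at $s$ and vanishes at $t>0$, so $c\neq 0$. Its unique zero is then $t\neq 0$, hence $d=\det C\neq 0$.
\item You argue by contradiction. You choose $P=xx^t$ with $x$ orthogonal to a nonzero vector $v\in\Ker C$, so that $v\in\Ker(C-sP)$ for every $s$.
\end{itemize}

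The paper's version works with any single $P$ but needs both the $s$ and the $t$ parts of the hypothesis. Yours uses only the existence of $s$. It needs the freedom to choose $P$, and $n\ge 2$ to find $x\perp v$. This also shows that the $s$ condition is what forces invertibility, as it must: for $n=2$ and $C=0$, the $t$ condition holds for every $P$.

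Two small points of economy:
\begin{itemize}
\item Your two cases in Step 1 collapse into one, since the choice $x\perp v$ works whatever $\dim\Ker C$ is.
\item The opening remarks about interlacing and inertia are never actually used.
\end{itemize}
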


\begin{proof}
Lep $P$ be any projection of rank one. It is orthogonaly similar to $E_{11}$, the matrix whose all entries are zero but the $(1,1)$-entry that is equal to 1. It follows that there exist real numbers $c,d$ such that
$$
p(r) = \det (B - (A + rP)) = cr + d
$$
for every real $r$. We know that the linear function $p$ is not a constant function and therefore $c \not=0$. Moreover, it attains the zero value at some positive real number yielding that $d$ is nonzero, too. This tells that $p(0) \not=0$, or equivalently, $B-A$ is invertible.

Let $x \in \mathbb{R}^n$ be any unit vector. We then know that there exists a positive real number $t$ such that $B-A - t xx^t$ is singular, or equivalently, $I - t (B- A)^{-1}xx^t$ is not invertible. Then the trace of the rank one matrix $
 t (B- A)^{-1}xx^t$ must be equal to 1, which can be rewritten as
$$
\langle (B-A)^{-1} x , x \rangle = {1 \over t} > 0.
$$
Thus, $(B-A)^{-1} > 0$, and consequently, $B-A >0$.
\end{proof}

Let $n \ge 2$ be an integer and $G(n,2n)$ the Grassmann space consisting of all $n$-dimensional
subspaces of $\mathbb{R}^{2n}$. For two subspaces $\mathcal{M} , \mathcal{N} \in G(n,2n)$ and $k \in \{ 0, 1, \ldots , n \}$ we write
$$
d( \mathcal{M}, \mathcal{N}) = k
$$
if and only if $\dim \mathcal{M} \cap \mathcal{N} = n-k$ which is equivalent to $\dim \mathcal{M} + \mathcal{N} = n+k$.
Two elements $\mathcal{M} , \mathcal{N} \in G(n,2n)$
are adjacent if $d( \mathcal{M}, \mathcal{N}) = 1$.

To each point in $G(n, 2n)$, that is, to each $n$-dimensional
subspace $\mathcal{M}$ of $\mathbb{R}^{2n}$,
we associate an $n\times 2n$ matrix whose rows are coordinates 
of the vectors that
form a basis of $\mathcal{M}$. Each $n \times 2n$ matrix will be written in the block form $[ X \ Y]$, where both $X$ and $Y$ are square matrices.
It is trivial to see that
two such matrices $[ X \ Y]$
and $[ X' \ Y']$ correspond to the same subspace $\mathcal{M}$, that is, their rows represent two
bases of $\mathcal{M}$, if and only if $[ X \ Y] = Q [ X' \ Y']$ for some invertible 
$n\times n$ matrix $Q$. If this is the case, then either both $X$ and $X'$ are
invertible, or both $X$ and $X'$ are singular.

Let $[ X \ Y]$ be a matrix representig a point in $G(n,2n)$.
When $X$ is singular the corresponding point in the Grassmann space 
is called a point at infinity.
Otherwise, it is called a finite point.
Observe that a finite point $[ X \ Y]$
can be represented also with the matrix
$[ I \ X^{-1}Y ]$. 
The matrix $X^{-1}Y$ in such a representation
is uniquely determined by the finite point in the Grassmann space.

The proof of the following easy linear algebra result can be found in \cite{Wan}.

\begin{lemma}\label{kinez}
Let $n \ge 2$ be an integer and $\mathcal{M} , \mathcal{N} \in G(n,2n)$ two finite points with the corresponding canonical matrix representations $[ I \ T]$ and $[ I \ S]$, respectively. Then
$$
d( \mathcal{M}, \mathcal{N}) = {\rm rank}\, (T-S).
$$
\end{lemma}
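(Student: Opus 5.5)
Two finite points have canonical representations $[I\ T]$ and $[I\ S]$, i.e. $\mathcal{M}$ is the row space of $[I\ T]$ and $\mathcal{N}$ is the row space of $[I\ S]$. The plan is to compute $\dim(\mathcal{M}+\mathcal{N})$ directly as the rank of a stacked matrix and then invoke the definition of $d$.

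The sum $\mathcal{M}+\mathcal{N}$ is the row space of the $2n\times 2n$ block matrix
$$
\left[ \begin{matrix} I & T \cr I & S \cr \end{matrix} \right].
$$
Hence $\dim(\mathcal{M}+\mathcal{N})$ equals the rank of this matrix. Elementary row operations preserve rank, so we subtract the first block row from the second and obtain
$$
\left[ \begin{matrix} I & T \cr 0 & S-T \cr \end{matrix} \right].
$$
The first $n$ rows are linearly independent because of the identity block. Any vector in the row space of the second block row has zero first $n$ coordinates. A vector of the form $u^t[I\ T]$ has zero first $n$ coordinates only if $u=0$, so the two row spaces intersect trivially. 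Therefore the rank is $n+{\rm rank}\,(S-T)$.

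By definition, $d(\mathcal{M},\mathcal{N})=k$ exactly when $\dim(\mathcal{M}+\mathcal{N})=n+k$. It follows that $d(\mathcal{M},\mathcal{N})={\rm rank}\,(T-S)$.

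The argument has no real obstacle. The only point needing care is that the row-space intersection argument justifies adding the ranks of the two block rows.
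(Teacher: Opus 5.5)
Your proof is correct and complete. The paper does not prove this lemma itself; it cites Wan's book. Your computation of the rank of the stacked matrix is the standard elementary verification. An equivalent route is to compute the intersection directly: $u^t[I\ T]=v^t[I\ S]$ forces $u=v$ and $u^t(T-S)=0$, so $\dim \mathcal{M}\cap\mathcal{N}=n-{\rm rank}\,(T-S)$.
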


In particular, if $\mathcal{M}$ and $\mathcal{N}$ are two finite points
in the Grassmann space, represented with
the uniquely determined square matrices $T$ and $S$, respectively, then 
$\mathcal{M}$ and $\mathcal{N}$ are adjacent if and only if 
${\rm rank}\, (T-S) = 1$. Two such matrices $T$ and $S$ are called adjacent.

A subspace $\mathcal{M}$ corresponding to a matrix $W = [ X \ Y]$ is self-dual if $WKW^t = 0$, where $K$ is the $2n \times 2n$ matrix
$$
K= \left[ \begin{matrix} 0 & I \cr -I & 0 \cr \end{matrix}\right]
$$
(note that this definition is not sensitive to the choice of a matrix $W$ representing the subspace $\mathcal{M}$). In other words, $W = [ X \ Y]$ represents a self-dual space if and only if $XY^t = YX^t$. In particular, if $\mathcal{M}$ is a self-dual finite point than it has a unique respresentation $[ I \, X]$ with $X$ being a symmetric matrix. 

\begin{proof}[Proof of Proposition \ref{predmain}]

We first need to see that $\psi_A$ is well-defined, that is, we have to verify that $\psi_A (X)$ is a symmetric matrix for every $X \in \mathcal{W}_A$.  Indeed, we have
$$
\psi_A (X) = (XA + I)^{-1} X = (XA + I)^{-1} X (AX+ I) (AX + I)^{-1} 
$$
$$
= (XA + I)^{-1}  (X A+ I) X (AX + I)^{-1} = X (AX + I)^{-1} .
$$
Our next goal is to verify that $\psi_A (X) \in \mathcal{W}_{-A}$ for every $X \in \mathcal{W}_A$. This follows from
$$
-A \psi_A (X) + I = - AX (AX+I)^{-1} + I $$  $$= - AX (AX+I)^{-1} + (AX + I) (AX+I)^{-1} = (AX+I)^{-1}.
$$
Finally, for every $X \in \mathcal{W}_A$ we have
$$
\psi_{-A} ( \psi_A (X)) = \psi_A (X) (-A \psi_A (X) + I)^{-1} = 
 X (AX + I)^{-1} (AX + I) = X.
$$
Similarly, $\psi_A ( \psi_{-A} (Y)) = Y$ for every $Y \in \mathcal{W}_{-A}$. This completes the proof of the first item. The second one is trivial. It is then clear that $\psi_A$ maps  $\U_A$ bijectively onto $\U_{-A}$. Thus, it reamins to show that for any pair $X,Y \in \U_A$ we have
$$
X \le Y \Rightarrow \psi_A(X) \le \psi_A (Y).
$$
By continuity, it is enough to check that 
for any pair $X,Y \in \U_A$ we have
\begin{equation}\label{ovink}
X < Y \Rightarrow \psi_A(X) <  \psi_A (Y).
\end{equation}

We will first prove that for every pair $X,Y \in \U_A$ we have 
\begin{equation}\label{avavav}
{\rm rank}\, ( \psi_A (Y) - \psi_A (X)) = {\rm rank}\, (Y-X).
\end{equation}
Clearly, the $2n \times 2n$ matrix 
$$
M = \left[ \begin{matrix} I & 0 \cr A & I \cr \end{matrix}\right]
$$
is invertible. Let $X,Y \in \U_A$ and denote $r = {\rm rank}\, (Y-X)$.
Denote by $\mathcal{M} , \mathcal{N} \in G(n,2n)$ the finite points with the matrix representations $[ I \ X]$ and $[ I \ Y]$, respectively. Then
$$
d( \mathcal{M}, \mathcal{N}) = r.
$$
We further denote by
$\mathcal{M}' , \mathcal{N}' \in G(n,2n)$ the subspaces with the matrix representations $[ I \ X]M$ and $[ I \ Y]M$, respectively.
Because $M$ is invertible we have
$$
d( \mathcal{M}', \mathcal{N}') = r.
$$
Obviously, the matrix representations of $\mathcal{M}' , \mathcal{N}'$ are $[ XA + I \ \ X]$ and $[ YA + I \ \ Y]$, respectively.
As we know that matrices $XA + I$ and $YA + I$ are both invertible the subspaces   $\mathcal{M}' , \mathcal{N}'$ are represented also by $[ I \ \  (XA + I)^{-1}  X]$ and $[ I \ \  (YA + I)^{-1} Y]$, respectively.  
This finally implies that ${\rm rank}\, ( \psi_A (Y) - \psi_A (X))  = r$ which completes the verification of (\ref{avavav}).

Let now $X$ be any element of $\U_A$ and $P$ any projection of rank one. Then we know that $X + tP \in \U_A$ for all real numbers $t$ whose absolute value is small enough.
Let $\mathcal{I} \subset \mathbb{R}$ be any interval such that $0 \in \mathcal{I}$ and  $X + tP \in \U_A$ for all $t \in \mathcal{I}$. Using (\ref{avavav}) we see that for every nonzero 
$t \in \mathcal{I}$ the difference $\psi_A (X +tP) - \psi_A (X)$ is of rank one, that is, for every nonzero
$t \in \mathcal{I}$ there exist a uniquely determined nonzero real number $f(t)$ and a uniqeuely determined rank one projection $Q_t$ such that $\psi_A (X + tP) = \psi_A (X) + f(t) Q_t$. We set $f(0) =0$. Now, if $t,s \in \mathcal{I}$ are two distinct nonzero real numbers then
applying (\ref{avavav}) once more we see that $ f(t) Q_t - f(s) Q_s$ is of rank one. Because both $f(t)$ and $f(s)$ are nonzero this is possible only if $Q_s = Q_t$. Thus, $Q = Q_t$ is independent of $t \in \mathcal{I}$ and we have 
$\psi_A (X + tP) = \psi_A (X) + f(t) Q$, $t \in \mathcal{I}$. Obviously, $f : \mathcal{I} \to \mathbb{R}$ is injective and continuous and therefore $f$ is either strictly increasing function, or strictly decreasing function. 
Because $\psi_{-A}$ has the same properties as $\psi_A$ the map $\tau$ which associates to each $P \in P_{n}^1$ the projection $Q$ as described above is bijective. The set of rank one projections $P$ for which we have $\psi_A (X +tP) = \psi_A(X) +  f(t)\tau (P)$ for all real $t$ with absolute value small enough with $f$ being strictly increasing is obviously closed in $P_{n}^1$. The same is true for the
set of rank one projections $P$ for which we have $\psi_A (X +tP) = \psi_A (X) + f(t)\tau (P)$ for all real $t$ with absolute value small enough with $f$ being strictly decreasing. It is well-known that $P_{n}^1$ is a connected manifold. Thus 
$P_{n}^1$ is equal to one of these two sets.

Assume next that $X,Y \in \U_A$ and $X < Y$. By Lemma \ref{uzivee} we know that for  all $P \in P_{n}^1$ and $t \in  [0, \alpha(X,P; Y) ]$   we have $X + tP \in \U_A$.

We have two possibilities. The first one is that there exists a bijective map $\tau : P_{n}^1  \to P_{n}^1$ such that for every $P \in P_{n}^1$ there exists a continuos strictly increasing function $f_P : [0, \alpha(X,P; Y) ] \to [0, \infty)$ such that we have
$$
\psi_A (X + tP) = \psi_A (X) + f_P (t) \tau (P), \ \ \ t \in  [0, \alpha(X,P; Y) ], \ \, P \in P_{n}^1.
$$
The second one is that there exists a bijective map $\tau : P_{n}^1  \to P_{n}^1$ such that for every $P \in P_{n}^1$ there exists a continuos strictly decreasing function $f_P : [0, \alpha(X,P; Y) ] \to (- \infty , 0 ]$ such that we have
$\psi_A (X + tP) = \psi_A (X) + f_P (t) \tau (P)$ for all $t \in  [0, \alpha(X,P; Y) ]$ and $P \in P_{n}^1$.

We start with the first possibility. For every $Z \in S_n$, $X < Z \le Y$, and every $P \in P_{n}^1$ we have
$$
{\rm rank}\, (\psi_A (Z)-  ( \psi_A (X) + f_P (\alpha(X,P; Z)) \tau (P))) = n-1.
$$
It follows from Lemma \ref{frusti} that $\psi_A (Z) > \psi_A (X)$. Moreover, we have 
$$
\alpha (\psi_A (X), \tau (P) ; \psi_A (Z)) = f_P (\alpha(X,P; Z)).
$$
This together with Lemma 
\ref{less}
implies that $\psi_A$ maps $( X, Y] = \{ Z \in S_n \, : \, X < Z \le Y \}$ to 
$( \psi_A (X), \psi_A (Y)]$ and preserves order. By the continuity,
$$
\psi_A ( [ X, Y] ) \subset
[ \psi_A (X), \psi_A (Y)],
$$
and for every pair $Z,W \in [ X, Y]$ we have
$$
Z < W \Rightarrow \psi_A (Z) < \psi_A (W).
$$

In the same way we see that in the second case 
$$\psi_A ( [ X, Y] ) \subset
[ \psi_A (Y), \psi_A (X)],
$$
and for every pair $Z,W \in [ X, Y]$ we have
$$
Z < W \Rightarrow \psi_A (Z) > \psi_A (W).
$$

It follows from Lemma \ref{trivialn} that either we have the first possibility for every pair $X,Y \in \U_A$ satisfying $X < Y$, or we have the second possibility for every pair $X,Y \in \U_A$ satisfying $X < Y$. We will complete the proof if we find a pair $X,Y \in \U_A$ satisfying $X < Y$ such that $\psi_A(X) < \psi_A (Y)$.

Because $0 \in \U_A$ and $\U_A$ is open we can find a positive real number $a$ such that $aI, (2a)I \in \U_A$ and ${ 1 \over 2a} I + A >0$. Then
$$
0 < {1 \over 2a} I + A <{1 \over a} I + A,
$$
and therefore
$$
\psi_A (aI) = ({1 \over a} I + A)^{-1} < ({1 \over 2a} I + A)^{-1} = \psi_A ((2a)I).
$$
This completes the proof.
\end{proof}

In the rest of the section we will assume that $n \ge 2$. 

\begin{lemma}\label{MZ}
Let $T$ be an invertible real $n \times n$ matrix. Then for every $X  \in [0,I]$ the matrix $X (T^t T -I) +I$ is invertible and the map $\phi_T :  [0,I] \to S_n$ given by 
\begin{equation}\label{lpy}
\phi_T (X) =  T \left( X (T^t T -I) +I \right)^{-1} X T^t, \ \ \
X \in [0,I],
\end{equation}
is an order embedding which maps $[0,I]$ bijectively onto itself. If $S$ is another invertible real $n \times n$ matrix then 
\begin{equation}\label{mmaay}
\phi_{ST} (X) = \phi_S (\phi_T (X))
\end{equation}
for every $X \in [0,I]$.
\end{lemma}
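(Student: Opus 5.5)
We prove Lemma \ref{MZ} by recognizing $\phi_T$ as a composition of maps already understood through Proposition \ref{predmain}. Set $A = T^tT - I$. Then $\phi_T(X) = T\,\psi_A(X)\,T^t$, provided $X \in \mathcal{W}_A$. The plan is to show $[0,I] \subset \U_A$, then read off the order-embedding property from Proposition \ref{predmain}, then compute the image, and finally verify the composition rule (\ref{mmaay}) algebraically.

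\emph{Step 1 (invertibility).} For $X \in [0,I]$ write
$$XA + I = X T^tT + (I - X).$$
Suppose $(XT^tT + I - X)v = 0$ and pair with $T^tTv$:
$$\langle X T^tTv, T^tTv\rangle + \langle (I-X)v, T^tTv\rangle = 0.$$
This is awkward because $T^tT$ and $X$ do not commute. It is cleaner to use the invertibility criterion from the proof of Lemma \ref{uzivee} in the special case $A + I = T^tT > 0$.

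Alternatively, for $X \in (0,I)$ we have $XA + I = X(A + X^{-1})$, and $A + X^{-1} = T^tT + (X^{-1} - I) > 0$, so $XA+I$ is invertible. For boundary points, apply the same factorization to $X(A+I) + (I - X)$, or argue directly: if $Xw = (X - I)T^tTv$ with $v \neq 0$, then pairing gives a sign contradiction. Concretely, set $u = T^tTv$. Then $Xu = -(I-X)v$. Taking the inner product with $v$ gives
$$\langle Xu, v\rangle = -\langle (I-X)v, v\rangle \le 0.$$
Since $X$ and $I - X$ are positive and commute, $\langle X T^tTv, v\rangle$ can be rewritten as $\langle T^tT\, X^{1/2}\cdots\rangle$. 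I expect this to need the commuting square roots of $X$ and $I-X$ carefully, and this is the main technical point.

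Connectivity of $[0,I]$ (it is convex) together with $0 \in [0,I] \subset \mathcal{W}_A$ then gives $[0,I] \subset \U_A$.

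\emph{Step 2 (order embedding).} By Proposition \ref{predmain}, $\psi_A$ is an order isomorphism on $\U_A$. Congruence $Y \mapsto TYT^t$ is an order automorphism of $S_n$. Hence $\phi_T$ is an order embedding on $[0,I]$.

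\emph{Step 3 (image).} We have $\phi_T(0) = 0$. Also
$$\phi_T(I) = T(T^tT)^{-1}T^t = I.$$
Order preservation gives $\phi_T([0,I]) \subset [0,I]$.

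\emph{Step 4 (composition rule).} Verify (\ref{mmaay}) on $(0,I)$ using the form
$$\phi_T(X) = T\bigl(T^tT - I + X^{-1}\bigr)^{-1}T^t = \bigl(I - (TT^t)^{-1} + T^{-t}X^{-1}T^{-1}\bigr)^{-1},$$
so that
$$\phi_T(X)^{-1} = I + T^{-t}(X^{-1} - I)T^{-1}.$$
In other words, $\phi_T$ acts as the congruence $T^{-t}(\cdot)T^{-1}$ on the quantity $X^{-1} - I$. Composing two such congruences gives $(ST)^{-t}(\cdot)(ST)^{-1}$, which yields (\ref{mmaay}) on $(0,I)$. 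It extends to all of $[0,I]$ by continuity of $\psi_A$ on $\U_A$.

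\emph{Surjectivity.} From (\ref{mmaay}) with $S = T^{-1}$ we get $\phi_T \circ \phi_{T^{-1}} = \phi_I = \mathrm{id}$ on $[0,I]$, using that $\phi_{T^{-1}}$ maps $[0,I]$ into itself. Hence $\phi_T$ maps $[0,I]$ onto itself.

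The main obstacle is the invertibility of $XA+I$ at singular boundary points of $[0,I]$. I would first try the $(0,I)$ factorization combined with a continuity/limit argument. If that fails, I would instead redo the eigenvalue-counting argument of Lemma \ref{uzivee}, using the congruence with $|A|^{1/2}$ and the sandwich
$$S \le |A|^{1/2}X|A|^{1/2} + S \le |A|^{1/2}|A|^{1/2} + S,$$
where the right-hand side equals $A + I$ on the positive part of $A$, which is positive definite there.
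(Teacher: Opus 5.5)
Your overall route is the paper's: set $A=T^tT-I$, write $\phi_T=T\psi_A(\cdot)T^t$, take the order-embedding property from Proposition \ref{predmain}, use $\phi_T(0)=0$ and $\phi_T(I)=I$, verify (\ref{mmaay}) by computation, and get surjectivity from $\phi_T\circ\phi_{T^{-1}}=\phi_I=\mathrm{id}$. Steps 2--4 are fine. Your identity $\phi_T(X)^{-1}=I+T^{-t}(X^{-1}-I)T^{-1}$ on $(0,I)$ is correct, and it also shows that $\phi_T$ maps $(0,I)$ into itself, which the composition needs.

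\textbf{The gap is Step 1.} You never prove that $XA+I$ is invertible for singular $X\in[0,I]$, i.e.\ that $[0,I]\subset\U_A$. Everything else depends on it: $\phi_T$ being defined on $[0,I]$, the appeal to Proposition \ref{predmain}, the continuity extension in Step 4, and surjectivity. The attempts you sketch do not close it:
\begin{itemize}
\item Your first suggested repair, a limit from $(0,I)$, cannot work. Invertibility is an open condition, so a limit of invertible matrices $XA+I$ with $X\in(0,I)$ may be singular.
\item The pairing you start with stalls, because $\langle XT^tTv,v\rangle$ has no definite sign ($XT^tT$ is not symmetric).
\item In the sandwich fallback, the fact you record ($|A|+S=A+I>0$ on the positive part of $A$) is not what the upper bound needs. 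To force $n-m$ negative eigenvalues you need $|A|+S$ to be negative definite on the $-1$ eigenspace of $S$. This does hold, because $|A|+S=S(A+I)$ with $S$ commuting with $A$ and $A+I=T^tT>0$, but you did not say it.
\end{itemize}

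The missing step is short, and either of the following fixes it.
\begin{itemize}
\item \emph{The paper's route.} Apply Lemma \ref{uzivee} to the pair $0<I$. Here $I\in\U_A$, because $tA+I=tT^tT+(1-t)I>0$ for $t\in[0,1]$, so the segment $t\mapsto tI$ joins $0$ to $I$ inside $\mathcal{W}_A$.
\item \emph{A direct argument, the one you were looking for.} Note that $XA+I$ is the transpose of $AX+I$. If $(AX+I)y=0$, then $(A+I)Xy=(X-I)y$. Pair this with $Xy$: you get $\|TXy\|^2=\langle (A+I)Xy,Xy\rangle=-\langle X(I-X)y,y\rangle\le 0$, since $X$ and $I-X$ are commuting positive matrices. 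Hence $Xy=0$, and then $y=-AXy=0$.
\end{itemize}
With either argument inserted, the rest of your proof goes through.
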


This lemma has been known, see the first part of the proof of Theorem 3.1 in \cite{Sem}. But we can also deduce it from our more general results. Indeed, it is trivial to verify that $\phi_T(0) = 0$ and  $\phi_T(I) = I$. By Lemma \ref{uzivee}, 
for every $X  \in [0,I]$ the matrix $X (T^t T -I) +I$ is invertible. The equality (\ref{mmaay}) can be verified by a straightforward calculation. The verification of the lemma can now be easily completed using Proposition \ref{predmain}.

We can summarize the above statement by saying that the map $\phi_T$ given by (\ref{lpy}) is an order automorphism of 
$[0,I]$ and the map $T \mapsto \phi_T$ is a homomorphism of the general linear group $GL(n, \mathbb{R})$ to a group of order automorphisms of the matrix interval $[0,I]$. 

If $T$ is an invertible real $n \times n$ matrix then clearly 
$$
\phi_T ((1/2)I) = (I+S)^{-1}
$$
where $S = (TT^t)^{-1}$.  When $T$ runs over the set of all invertible real $n \times n$ matrices, $S$ runs over all positive definite matrices. It follows that for every $C \in (0,I)$ there exists an invertible matrix $T$ such that $\phi_T ((1/2)I) = C$. This further implies that for every pair $C,D \in (0,I)$ there exists an invertible $n \times n$ matrix $R$ such that $\phi_R (C) = D$. 

If $O$ is an orthogonal matrix then $\phi_O (X) = OXO^t$ for every $X \in [0,I]$.

We are now ready to prove the following.

\begin{corollary}\label{crev}
Let $n \ge 2$ and $\phi : [0,I] \to S_n$ be an order embedding satisfying $\phi (0) = 0$ and $\phi (I) = I$. Assume that $\phi$ is continuous at $0$ and $I$. Then there exists an invertible real $n \times n$ matrix $T$ such that $\phi = \phi_T$.
\end{corollary}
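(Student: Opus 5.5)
We reduce Corollary \ref{crev} to Propositions \ref{nice} and \ref{nice2} by composing $\phi$ with a suitable $\phi_R$ from Lemma \ref{MZ}, so that the midpoint $(1/2)I$ becomes fixed and the map is continuous there.

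\textbf{Step 1: a point of continuity.} By Lemma \ref{automcont}, applied to the restriction of $\phi$ to $(0,I)$, there is $t_0 \in (0,1)$ such that $\phi$ is continuous at $t_0 I$. By Lemma \ref{nicce}, $\phi$ is injective and rank preserving. Also $0 = \phi(0) \le \phi(t_0 I) \le \phi(I) = I$, and $\phi(t_0 I)$ has rank $n$, so $\phi(t_0 I) > 0$. The same argument applied to $X \mapsto I - \phi(I-X)$ (as in Corollary \ref{posle1}) gives $I - \phi(t_0 I) > 0$. Hence $C := \phi(t_0 I) \in (0,I)$.

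\textbf{Step 2: normalize.} By the remarks after Lemma \ref{MZ}, there are invertible matrices $R, S$ with
\[
\phi_R(C) = (1/2)I \quad \text{and} \quad \phi_S((1/2)I) = t_0 I .
\]
Put $\psi = \phi_R \circ \phi \circ \phi_S$. Each $\phi_T$ is an order automorphism of $[0,I]$ fixing $0$ and $I$. It is continuous on $[0,I]$ because it is given by a rational formula with invertible denominators, as Lemma \ref{MZ} guarantees. Therefore $\psi : [0,I] \to S_n$ is an order embedding with $\psi(0)=0$ and $\psi(I)=I$. The map $\phi$ sends $[0,I]$ into $[0,I]$, so composing with $\phi_R$ is legitimate.

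Continuity of $\psi$ at $0$, $(1/2)I$ and $I$ follows from three facts: $\phi_S$ is continuous and sends these points to $0$, $t_0 I$, $I$; $\phi$ is continuous at each of those images; and $\phi_R$ is continuous. Moreover,
\[
\psi((1/2)I) = \phi_R(\phi(t_0 I)) = \phi_R(C) = (1/2)I .
\]

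\textbf{Step 3: conclude.} Proposition \ref{nice} (for $n \ge 3$) or Proposition \ref{nice2} (for $n = 2$) now gives an orthogonal matrix $O$ with $\psi = \phi_O$. By (\ref{mmaay}), $\phi_R^{-1} = \phi_{R^{-1}}$ and $\phi_S^{-1} = \phi_{S^{-1}}$. Hence
\[
\phi = \phi_{R^{-1}} \circ \phi_O \circ \phi_{S^{-1}} = \phi_{R^{-1} O S^{-1}} ,
\]
and we may take $T = R^{-1} O S^{-1}$.

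The main obstacle is Step 1. We need a point of continuity that lies strictly inside $(0,I)$, not merely in $[0,I]$, so that it can be transported to $(1/2)I$; this is where injectivity and rank preservation from Lemma \ref{nicce} are used. Everything else is bookkeeping with the group homomorphism $T \mapsto \phi_T$.
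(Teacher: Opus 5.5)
Your proposal is correct and follows the paper's proof. You choose a continuity point $t_0 I$ via Lemma \ref{automcont}, compose with $\phi_R$ and $\phi_S$ so that $(1/2)I$ is fixed, apply Propositions \ref{nice} and \ref{nice2}, and undo the normalization using the homomorphism $T \mapsto \phi_T$. You also make explicit, via Lemma \ref{nicce} applied to $\phi$ and to $X \mapsto I - \phi(I-X)$, that $\phi(t_0 I) \in (0,I)$, which the paper uses without comment.
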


\begin{proof}
Using Lemma \ref{automcont} and the fact that for every pair $C,D \in (0,I)$ there exists an invertible $n \times n$ matrix $R$ such that $\phi_R (C) = D$
we can find invertible matrices $S_1$ and $S_2$ such that the map $X \mapsto \phi_{S_1} (\phi (\phi_{S_2} (X)))$ sends matrices $0, (1/2)I,I$ to themselves and is continuous at $0$, $(1/2)I$, and $I$. Applying Propositions \ref{nice} and \ref{nice2} we see that there exists an orthogonal matrix $O$ such that $\phi_{S_1} \circ \phi \circ \phi_{S_2} = \phi_O$, or equivalently, $\phi = \phi_T$, where $T = {S_{1}^{-1}OS_{2}^{-1}}$.
\end{proof}

\begin{corollary}\label{zrev}
Let $n \ge 2$ and $\phi : [0,I] \to S_n$ be an order embedding satisfying $\phi (0) = 0$. Assume that $\phi$ is continuous at $0$ and $I$. Then for every unit vector $x \in \mathbb{R}^n$ there exist a unit vector $u$ and real numbers $a,b$ satisfying $a > 0$ and $b > -1$ such that 
$$
\phi (txx^t) = { at \over bt+1} uu^t
$$
for every $t \in [ 0, 1] $.
\end{corollary}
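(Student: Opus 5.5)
The plan is to reduce to Corollary \ref{crev} by normalizing $\phi(I)$ to $I$, and then to read off the formula on the ray $\{txx^t\}$ from the explicit form of $\phi_T$.

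\emph{Normalization.} Since $0 \le I$ we have $B := \phi(I) \ge 0$. By Lemma \ref{nicce} (which only needs $\phi(0)=0$ and continuity at $0$), ${\rm rank}\,\phi(I) = {\rm rank}\, I = n$. Hence $B > 0$, and $B = CC^t$ for some invertible $C$. The map $\tilde\phi(X) = C^{-1}\phi(X)(C^{-1})^t$ is again an order embedding of $[0,I]$, since congruence by an invertible matrix preserves Loewner order in both directions. Moreover $\tilde\phi$ is continuous at $0$ and $I$, with $\tilde\phi(0)=0$ and $\tilde\phi(I)=I$. Corollary \ref{crev} then gives an invertible $T$ with $\tilde\phi=\phi_T$, so that
\[
\phi(X) = CT\left( X(T^tT-I)+I\right)^{-1}XT^tC^t, \qquad X\in[0,I].
\]

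\emph{Computation on the ray.} Put $X = txx^t$ and $M = T^tT - I$, and write $c = x^tMx = \|Tx\|^2 - 1$, which satisfies $c > -1$. Then $X(M)+I = I + t\,x(x^tM)$ is a rank-one perturbation of $I$. Its determinant is $1+tc$, which is positive for $t\in[0,1]$. The Sherman--Morrison formula gives
\[
(I + t\,x x^tM)^{-1}x = x - \frac{t\,x\,(x^tMx)}{1+tc} = \frac{x}{1+tc}.
\]
Consequently
\[
\phi(txx^t) = \frac{t}{1+ct}\, CTx\,(CTx)^t.
\]

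\emph{Reading off the constants.} Set $v = CTx$, which is nonzero because $CT$ is invertible. Take $u = v/\|v\|$, $a = \|v\|^2 > 0$ and $b = c > -1$. This yields $\phi(txx^t) = \dfrac{at}{bt+1}\,uu^t$ for all $t\in[0,1]$.

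The only step with real content is the normalization: we must know that $\phi(I)$ is positive definite, so that Corollary \ref{crev} applies. This is exactly what Lemma \ref{nicce} provides.
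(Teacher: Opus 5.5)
Your proposal is correct and follows essentially the same route as the paper. You normalize $\phi(I)$ to $I$ (the paper uses $\phi(I)^{-1/2}$ where you use a factor $C$ with $CC^t=\phi(I)$), invoke Corollary \ref{crev}, and compute $\phi_T$ on the ray $txx^t$ via the rank-one inverse formula. You are slightly more explicit than the paper in justifying $\phi(I)>0$ through Lemma \ref{nicce} and in tracking the constants $a=\|CTx\|^2$, $b=\|Tx\|^2-1$ through the normalization, both of which the paper leaves to the reader.
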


\begin{remark}
A careful reader has observed that already the proof of Proposition \ref{predmain} yields that  for every unit vector $x \in \mathbb{R}^n$ there exist a unit vector $u$ and a strictly increasing function $f :  [ 0, 1] \to \mathbb{R}$ such that
$\phi (txx^t) = f(t) uu^t$
for every $t \in [ 0, 1] $. In the special case when the order embedding $\phi$ is defined on the matrix interval  $[0,I]$ and satisfies $\phi (0) = 0$ the above statement gives a more concrete description of such a function $f$.
\end{remark}

\begin{proof}
The proof is by a straightforward calculation. We start with the special case that $\phi (I) = I$. 
We know that there is an invertible real matrix $T$ such that $\phi (X) = T ( X (T^t T -I) + I)^{-1}XT^t$, $X \in [0,I]$. Denoting $A = A^t = T^t T -I  > -I$ we see that for every real $t$, $0 < t \le 1$, we have $\langle Ax, tx \rangle = t \langle Ax, x \rangle > t  \langle (-I)x, x \rangle = -t \ge -1$. It follows that
$$
\phi (t xx^t) = tT( txx^t A + I)^{-1} xx^t T^t = tT( (tx)(Ax)^t  + I)^{-1} x(Tx)^t   
$$
$$
= tT \left( I - { 1 \over 1 + t\langle Ax,x \rangle} (tx)(Ax)^t \right)x (Tx)^t = t (Tx)(Tx)^t - { t^2 \langle Ax, x \rangle \over 1 + t \langle Ax, x \rangle } (Tx)(Tx)^t
$$
$$
= { t \over 1 +  t\langle Ax,x \rangle} (Tx)(Tx)^t.
$$
Obviously, $\langle Ax,x \rangle = \| Tx \|^2 - \| x \|^2 = \| Tx \|^2 - 1$. If we denote $u = {1 \over \| Tx \|} Tx$, then 
$$
\phi (t xx^t) = { t \| Tx \|^2 \over t(\|Tx \|^2 - 1 ) + 1 } uu^t =  { at \over bt+1} uu^t,
$$
where $a > 0$, and consequently $a-1 > -1$.

The general case can be reduced to the above special case by considering the map $X \mapsto \phi (I)^{-1/2} \phi (X)  \phi (I)^{-1/2}$, $X \in  [0,I]$.
\end{proof}

\begin{remark}\label{ysr} 
Let $a,b$ be positive real numbers and $c,d > -1$. We define the function $f_{a,c} :  [ 0, 1] \to  [ 0, 1]$ by $f_{a,c}(t) =  { at \over ct+1}$, $t \in [ 0, 1]$. If  $\delta$ is a positive real number no larger than $1$, and $f_{a,c}$ coincides with $f_{b,d}$ on the interval $[0, \delta]$, then $a=b$ and $c=d$. The verification of this fact is trivial.
\end{remark}
\begin{corollary}\label{verc}
Let $n \ge 2$ and $\phi : [0,I] \to S_n$ be an order embedding. Assume that $\phi$ is continuous at $0$ and $I$. Then $\phi$ is continuous.
\end{corollary}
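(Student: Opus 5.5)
The plan is to reduce to Corollary \ref{crev}, which only needs the normalizations $\phi(0)=0$ and $\phi(I)=I$. First I translate and congruence-normalize.

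\textbf{Normalization.} Set $\phi_1(X)=\phi(X)-\phi(0)$. This is still an order embedding of $[0,I]$, continuous at $0$ and $I$, and satisfies $\phi_1(0)=0$.

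\textbf{Invertibility of $\phi_1(I)$.} Since $\phi_1$ is continuous at $0$, Lemma \ref{nicce} gives ${\rm rank}\,\phi_1(I)={\rm rank}\, I=n$. So $\phi_1(I)>0$, because $\phi_1(I)\ge\phi_1(0)=0$.

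\textbf{Reduction to Corollary \ref{crev}.} Define $\phi_2(X)=\phi_1(I)^{-1/2}\phi_1(X)\phi_1(I)^{-1/2}$. Congruence by an invertible matrix preserves Loewner order in both directions and is a homeomorphism of $S_n$. Hence $\phi_2$ is an order embedding of $[0,I]$ with $\phi_2(0)=0$ and $\phi_2(I)=I$, continuous at $0$ and $I$. Corollary \ref{crev} then gives an invertible $T$ with $\phi_2=\phi_T$.

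\textbf{Continuity of $\phi_T$.} By Lemma \ref{MZ}, $X(T^tT-I)+I$ is invertible for every $X\in[0,I]$. Matrix inversion is continuous on invertible matrices, so the formula (\ref{lpy}) is continuous on $[0,I]$.

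\textbf{Conclusion.} Undoing the normalizations,
$$
\phi(X)=\phi(0)+\phi_1(I)^{1/2}\phi_T(X)\phi_1(I)^{1/2},
$$
which is continuous on $[0,I]$.

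The only step with any content is the invertibility of $\phi_1(I)$, and Lemma \ref{nicce} supplies it. Continuity at $I$ is needed only to invoke Corollary \ref{crev}. Corollary \ref{crev} relies on Propositions \ref{nice} and \ref{nice2}, which are stated for all $n\ge2$, so there is no dimensional obstacle.
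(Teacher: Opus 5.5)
Your proposal is correct and follows the paper's proof step for step. You translate by $\phi(0)$, get $\phi(I)>0$ from Lemma \ref{nicce}, normalize by congruence with $\phi(I)^{-1/2}$, and invoke Corollary \ref{crev}. The only addition is that you spell out why $\phi_T$ is continuous (invertibility of $X(T^tT-I)+I$ on $[0,I]$ from Lemma \ref{MZ}), which the paper leaves implicit.
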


\begin{proof}
Replacing $\phi$ by the map $X \mapsto \phi (X) - \phi (0)$, $X \in  [0,I]$, we may assume with no loss of generality that $\phi (0) =0$. We have $\phi (I) \ge 0 = \phi (0)$ and by Lemma \ref{nicce} we know that actually $\phi (I) >0$. Replacing $\phi$ once more, this time with the map $X \mapsto \phi(I)^{-1/2} \phi (X) \phi (I)^{-1/2}$, $X \in  [0,I]$, we arrive at the situation when $\phi(I) =I$. By Corollary \ref{crev}, $\phi$ is continuous.
\end{proof}

Let $A, B \in S_n$ with $A < B$. Then the map $X \mapsto (B-A)^{1/2} X (B-A)^{1/2} +A$, $X \in [0,I]$, is an order embedding of $[0,I]$ onto $[A,B]$. Clearly, it is a homeomorphism. Thus, we have the following straightforward consequence of Corollary \ref{verc}.

\begin{corollary}\label{verc2}
Let $n \ge 2$ and $A, B \in S_n$ with $A < B$. Assume that
 $\phi : [A,B] \to S_n$ is an order embedding that is continuous at $A$ and $B$. Then $\phi$ is continuous.
\end{corollary}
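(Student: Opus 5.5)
The plan is to transport the problem to the standard interval $[0,I]$ by an affine homeomorphism that preserves order, and then invoke Corollary \ref{verc}. Set $C = (B-A)^{1/2}$; it is positive definite because $A<B$. Define $\theta : [0,I] \to [A,B]$ by $\theta(X) = CXC + A$. I will check, in order:
\begin{itemize}
\item $\theta$ maps $[0,I]$ onto $[A,B]$, with inverse $Y \mapsto C^{-1}(Y-A)C^{-1}$;
\item $X \le Y \iff \theta(X) \le \theta(Y)$, because congruence by an invertible matrix preserves positivity in both directions;
\item both $\theta$ and $\theta^{-1}$ are continuous, being affine maps on the finite-dimensional space $S_n$.
\end{itemize}
This makes $\theta$ an order isomorphism and a homeomorphism of $[0,I]$ onto $[A,B]$, with $\theta(0)=A$ and $\theta(I)=B$.

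Next, consider $\tilde\phi = \phi\circ\theta : [0,I]\to S_n$. It is an order embedding: for $X,Y\in[0,I]$ we have $\tilde\phi(X)\le\tilde\phi(Y) \iff \theta(X)\le\theta(Y) \iff X\le Y$. It is continuous at $0$ because $\theta$ is continuous at $0$, $\theta(0)=A$, and $\phi$ is continuous at $A$ by hypothesis. The same argument at $I$, using $\theta(I)=B$ and continuity of $\phi$ at $B$, gives continuity of $\tilde\phi$ at $I$. Corollary \ref{verc} therefore applies, and $\tilde\phi$ is continuous on $[0,I]$.

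Finally, $\phi = \tilde\phi\circ\theta^{-1}$ is a composition of continuous maps, so $\phi$ is continuous on $[A,B]$.

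No step here is a real obstacle. The only point needing care is that the endpoint continuity hypotheses transfer through $\theta$, and this works precisely because $\theta$ sends $0\mapsto A$ and $I\mapsto B$ and is continuous. All the substantive work was already done in Corollary \ref{verc}.
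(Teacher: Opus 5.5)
Your proposal is correct and follows the paper's own argument: the paper likewise transports the problem to $[0,I]$ via the order isomorphism and homeomorphism $X \mapsto (B-A)^{1/2} X (B-A)^{1/2} + A$ and then invokes Corollary \ref{verc}. Your explicit check that continuity at the endpoints passes through $\theta$, because $\theta(0)=A$ and $\theta(I)=B$, is exactly the detail the paper leaves implicit.
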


\begin{proposition}\label{aqaq}
Let $\U \in S_n$ be a domain and $\phi : \U \to S_n$ an order embedding. Then $\phi$ is continuous.
\end{proposition}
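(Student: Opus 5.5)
Continuity is a local property, so we fix $X_0 \in \U$ and prove that $\phi$ is continuous at $X_0$. The idea is to place $X_0$ in the interior of a small matrix interval on which $\phi$ is continuous at both endpoints, and then apply Corollary \ref{verc2}. Since $\U$ is open, there is $\varepsilon > 0$ with $[X_0 - 2\varepsilon I, X_0 + 2\varepsilon I] \subset \U$. The task is therefore to find real numbers $s, t \in (\varepsilon, 2\varepsilon)$ such that $\phi$ is continuous at $X_0 - sI$ and at $X_0 + tI$. Once we have them, the restriction of $\phi$ to $[X_0 - sI, X_0 + tI]$ is an order embedding continuous at both endpoints. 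Corollary \ref{verc2} then makes it continuous on this interval. The interval contains the neighbourhood $D(X_0, \varepsilon) = (X_0 - \varepsilon I, X_0 + \varepsilon I)$ of $X_0$ in $S_n$, so $\phi$ is continuous at $X_0$.

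To find such $s$ and $t$, we transfer Lemma \ref{automcont} to a large interval. Consider the affine order isomorphism
$$
\theta : (0,I) \to (X_0 - 2\varepsilon I, X_0 + 2\varepsilon I), \qquad \theta(Y) = X_0 - 2\varepsilon I + 4\varepsilon Y .
$$
It is a homeomorphism and maps $rI$ to $X_0 + (4r-2)\varepsilon I$. The composite $\phi \circ \theta$ is an order embedding of $(0,I)$. By Lemma \ref{automcont}, it is continuous at $rI$ for all $r \in (0,1)$ outside an at most countable set.

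Continuity of $\phi \circ \theta$ at $rI$ is the same as continuity of $\phi$ at $\theta(rI)$. Continuity at a point depends only on a neighbourhood, and the open interval $(X_0 - 2\varepsilon I, X_0 + 2\varepsilon I)$ is open in $S_n$, so continuity of the restriction at $\theta(rI)$ coincides with continuity of $\phi$ there. Thus $\phi$ is continuous at $X_0 + cI$ for all but countably many $c \in (-2\varepsilon, 2\varepsilon)$. The intervals $(-2\varepsilon, -\varepsilon)$ and $(\varepsilon, 2\varepsilon)$ are uncountable, so we can pick $-s$ in the first and $t$ in the second with $\phi$ continuous at $X_0 - sI$ and at $X_0 + tI$.

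The main point to watch is that Corollary \ref{verc2} requires continuity of the restriction to the closed interval $[X_0 - sI, X_0 + tI]$ at its endpoints. This is implied by continuity of $\phi$ itself at those points, which we have. Apart from that, the argument is a direct assembly of the earlier results.
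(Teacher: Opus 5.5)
Your proof is correct and follows the paper's argument. You make explicit, via the affine order isomorphism $\theta$, the ``easy modification'' of Lemma~\ref{automcont} that the paper invokes, then choose continuity points $X_0 - sI$ and $X_0 + tI$ and apply Corollary~\ref{verc2}. Taking $s,t>\varepsilon$ so that the interval visibly contains a neighbourhood of $X_0$ is harmless, since any $s,t>0$ would already put $X_0$ in the interior, as in the paper.
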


\begin{proof}
Take any $X_0 \in \U$. As $\U$ is open there exists $\varepsilon >0$ such that $(X_0 - \varepsilon I, X_0 + \varepsilon I) \subset \U$. By an easy and straightforward modification of Lemma \ref{automcont} the map $\phi$ is continuous 
at every but countably many points of the form $X_0 + tI$, $-\varepsilon < t < \varepsilon$. In particular, there exist positive real numbers $a,b < \varepsilon$ such that $\phi$ is continuous at $X_0 - aI$ and $X_0 +bI$ and by Corollary \ref{verc2},
$\phi$ is continuous at $X_0$. 
\end{proof}

If $\U_1 , \U_2 \subset S_n$ are domains and $\phi_j : \U_j \to S_n$, $j=1,2$, order embeddings then we say that $\phi_2$ is an extension of $\phi_1$ if $\U_1 \subset \U_2$ and the restriction of $\phi_2$ to $\U_1$ coincides with $\phi_1$. An order embedding $\phi : \U \to S_n$ is maximal if there is no proper extension of $\phi$. By Zorn's Lemma, each order emebedding has a maximal extension. 

\begin{lemma}\label{kakjihje}
Let $n \ge 2$ and $\phi_1 , \phi_2 : [0,I] \to S_n$ be order embeddings satisfying $\phi_j (0) = 0$, $j=1,2$. Assume that $\phi_1$ and $\phi_2$ are continuous at $0$ and $I$. 
Let $C \in (0,I)$ and assume that $\phi_1 (X) = \phi_2 (X)$ for every $X \in  [0,C]$. Then $\phi_1 = \phi_2$.
\end{lemma}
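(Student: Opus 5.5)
We reduce to the explicit description of Corollary \ref{crev}. By Lemma \ref{nicce}, $\phi_j(I) > 0$ for $j=1,2$, so we may normalize. However, $\phi_1(I)$ and $\phi_2(I)$ need not be equal a priori, so the two maps cannot be normalized by the same congruence. Instead we apply Corollary \ref{crev} to each map separately. Set $C_j = \phi_j(I)^{-1/2}$. Then $X \mapsto C_j \phi_j(X) C_j$ is an order embedding of $[0,I]$ that fixes $0$ and $I$ and is continuous at $0$ and $I$. Hence there are invertible $T_j$ with
$$\phi_j(X) = \phi_j(I)^{1/2}\, T_j \bigl( X(T_j^tT_j - I) + I\bigr)^{-1} X T_j^t\, \phi_j(I)^{1/2}.$$
Writing $A_j = T_j^tT_j - I$ and $R_j = \phi_j(I)^{1/2} T_j$ (invertible), we get
$$\phi_j(X) = R_j (XA_j + I)^{-1} X R_j^t, \qquad X \in [0,I].$$
By the computation in the proof of Proposition \ref{predmain}, this equals $R_j X (A_jX+I)^{-1} R_j^t$.

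The task is now to show that $R_1 = \pm R_2$ up to the harmless ambiguity, and that $A_1 = A_2$, using only agreement on $[0,C]$. First use rank one matrices, as in Corollary \ref{zrev}. For a unit vector $x$ and small $t$ with $txx^t \in [0,C]$ (possible since $C > 0$, for $t \le \alpha(0,xx^t;C)$), the computation in Corollary \ref{zrev} gives
$$\phi_j(txx^t) = \frac{t}{1 + t\langle A_j x,x\rangle} (R_jx)(R_jx)^t.$$
Equality for all $t \in [0,\delta]$ yields, by Remark \ref{ysr} (or simply by comparing the function $t/(1+ct)$ times a fixed rank one matrix), that $\langle A_1x,x\rangle = \langle A_2x,x\rangle$ and $(R_1x)(R_1x)^t = (R_2x)(R_2x)^t$ for every unit $x$.

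Polarization of the first identity gives $A_1 = A_2$. For the second identity, we have $R_1^t y \perp$ data from quadratic forms. Indeed, $(R_1x)(R_1x)^t = (R_2x)(R_2x)^t$ for all $x$ means $R_1 x = \varepsilon(x) R_2 x$ with $\varepsilon(x) = \pm 1$. By linearity, a standard argument shows that $\varepsilon$ is constant: if $\varepsilon(x) \ne \varepsilon(y)$ for independent $x, y$, then $R_1(x+y) = \pm R_2(x+y)$ forces $R_2x$ and $R_2y$ to be dependent, contradicting invertibility. Hence $R_1 = \pm R_2$, so $R_1 \cdot R_1^t$-type expressions coincide. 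Consequently
$$\phi_1(X) = R_1(XA_1+I)^{-1}XR_1^t = R_2(XA_2+I)^{-1}XR_2^t = \phi_2(X)$$
for all $X \in [0,I]$.

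The main obstacle is to carry out the reduction to Corollary \ref{crev} carefully without assuming $\phi_1(I) = \phi_2(I)$. Choosing the parametrization $R_j$, $A_j$ above avoids this issue. The remaining steps are only the rank one calculation and polarization. One should also check that $txx^t \in [0,C]$ for all small $t$, which holds because $C$ is positive definite.
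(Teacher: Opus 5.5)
Your proof is correct, but after the first step it takes a different route from the paper's. The paper also starts from the rank-one description (Corollary \ref{zrev} with Remark \ref{ysr}). It uses this to get $\phi_1(tP)=\phi_2(tP)$ on entire rank-one segments $t\in[0,1]$, and then finishes order-theoretically. For $X\in[0,I]$, every $tP\le\phi_1(X)$ equals $\phi_1(sQ)$ for some $sQ\le X$, so $tP\le\phi_2(X)$, and symmetrically; Corollary \ref{equal1} then gives $\phi_1(X)=\phi_2(X)$.

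You instead apply Corollary \ref{crev} to each normalized map to obtain global closed forms $\phi_j(X)=R_j(XA_j+I)^{-1}XR_j^t$. You read off $\langle A_jx,x\rangle$ and $R_jx$ (up to sign) from the germ of $t\mapsto\phi_j(txx^t)$ at $0$. Polarization and a sign-consistency argument then identify the parameters, and agreement on all of $[0,I]$ follows at once from the formula.

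Your route avoids the surjectivity fact implicit in the paper's second step, namely that each rank-one $tP\le\phi_1(X)$ lies in the image of a rank-one segment. That fact rests on $\phi_T$ being a bijection of $[0,I]$ and on rank preservation. Your route also avoids Corollary \ref{equal1}, and it shows in passing that $(R,A)$ is determined up to $R\mapsto -R$. The paper's route stays within its order-theoretic toolkit and needs no identification of parameters.

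Two small points. The sentence ``we have $R_1^t y\perp$ data from quadratic forms'' is a stray fragment and should be removed. The sign argument can also be skipped entirely. Since $(R_1x)(R_1x)^t=(R_2x)(R_2x)^t$ for all $x$, and the matrices $xx^t$ span $S_n$, you already get $R_1MR_1^t=R_2MR_2^t$ for every $M\in S_n$. In particular this holds for the symmetric matrix $M=(XA+I)^{-1}X$.
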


\begin{proof}
It follows from Corollary \ref{zrev} and Remark \ref{ysr} that
$$
\phi_1 (tP) = \phi_2 (tP)
$$
for every $P\in P_{n}^1$ and every real $t$, $0 \le t \le 1$. 

Let $X \in [0,I]$ and $P \in P_{n}^1$.
We know that for every nonnegative real $t$ such that $tP \le \phi_1 (X)$ there exist  $Q\in P_{n}^1$ and real $s$, $0 \le s \le 1$, such that $\phi_1 (sQ) = tP$. 
It follows that $sQ \le X$, and consequently, $tP \le \phi_2 (X)$. The same conclusion is reached by swaping the roles of $\phi_1$ and $\phi_2$. Thus, for every nonnegative real number $t$ we have
$$
tP \le \phi_1 (X) \iff  tP \le \phi_2 (X).
$$
It follows from Corollary \ref{equal1} that $\phi_1 (X) = \phi_2 (X)$.
\end{proof}

We have a straightforward consequence.

\begin{corollary}\label{kakjihje2}
Let $n \ge 2$, $A,B \in S_n$ with $A<B$, and $\phi_1 , \phi_2 : [A,B] \to S_n$ be order embeddings that are continuous at $A$ and $B$. 
Let $C \in (A,B)$ and assume that $\phi_1 (X) = \phi_2 (X)$ for every $X \in  [A,C]$. Then $\phi_1 (X) = \phi_2 (X)$ for every $X \in  [A,B]$.
\end{corollary}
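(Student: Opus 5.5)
We reduce Corollary \ref{kakjihje2} to Lemma \ref{kakjihje} by transporting everything to the standard interval $[0,I]$. Define the affine order isomorphism
$$
\mu (X) = (B-A)^{1/2} X (B-A)^{1/2} + A, \ \ \ X \in [0,I].
$$
It is a homeomorphism of $[0,I]$ onto $[A,B]$ with $\mu(0) = A$ and $\mu(I) = B$. It preserves order in both directions, so it maps matrix intervals to matrix intervals: $\mu([0,C']) = [A,C]$, where
$$
C' = (B-A)^{-1/2}(C-A)(B-A)^{-1/2}.
$$
Since $A < C < B$, we have $0 < C' < I$, that is, $C' \in (0,I)$.

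Next we normalize at the origin. Set
$$
\psi_j (X) = \phi_j(\mu(X)) - \phi_1(A), \ \ \ X \in [0,I], \ \ j=1,2.
$$
Each $\psi_j$ is an order embedding of $[0,I]$, being a composition of order embeddings followed by a translation. Each is continuous at $0$ and $I$, because $\mu$ is continuous and sends $0,I$ to $A,B$, where $\phi_j$ is continuous. The hypothesis gives $\phi_1(A) = \phi_2(A)$, since $A \in [A,C]$. Hence $\psi_1(0) = \psi_2(0) = 0$.

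The assumption $\phi_1 = \phi_2$ on $[A,C] = \mu([0,C'])$ translates to $\psi_1 = \psi_2$ on $[0,C']$. Lemma \ref{kakjihje} therefore gives $\psi_1 = \psi_2$ on all of $[0,I]$. Undoing the translation and composing with $\mu^{-1}$, which maps $[A,B]$ onto $[0,I]$, yields $\phi_1 = \phi_2$ on $[A,B]$.

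Two points need a brief check. First, $\mu$ carries $[0,C']$ exactly onto $[A,C]$; this holds because congruence by the invertible matrix $(B-A)^{1/2}$ and translation by $A$ both preserve $\le$ in both directions. Second, we must use the same constant $\phi_1(A)$ for both maps so that the hypothesis $\psi_j(0) = 0$ of Lemma \ref{kakjihje} holds. Neither point is a real obstacle; the whole content of the statement lies in Lemma \ref{kakjihje}.
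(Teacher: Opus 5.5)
Your proof is correct and is the reduction the paper has in mind when it calls this a straightforward consequence of Lemma \ref{kakjihje}. You transport to $[0,I]$ via the affine order isomorphism $X \mapsto (B-A)^{1/2}X(B-A)^{1/2}+A$ (the same map the paper uses for Corollary \ref{verc2}) and subtract the common value $\phi_1(A)=\phi_2(A)$; your checks that $C'\in(0,I)$ and $\mu([0,C'])=[A,C]$ supply the only details needed.
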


In the same way we prove the following.

\begin{lemma}\label{kakjihje3}
Let $n \ge 2$, $A,B \in S_n$ with $A<B$, and $\phi_1 , \phi_2 : [A,B] \to S_n$ be order embeddings that are continuous at $A$ and $B$. 
Let $C \in (A,B)$ and assume that $\phi_1 (X) = \phi_2 (X)$ for every $X \in  [C,B]$. Then $\phi_1 (X) = \phi_2 (X)$ for every $X \in  [A,B]$.
\end{lemma}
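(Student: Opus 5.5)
The natural approach is to reduce Lemma \ref{kakjihje3} to Corollary \ref{kakjihje2} by an order-reversing change of variables. Define the map $\rho : [A,B] \to [A,B]$ by $\rho (X) = A + B - X$. It is a bijective, affine and continuous involution of $[A,B]$ onto itself that reverses order: $X \le Y \iff \rho(Y) \le \rho(X)$. It also interchanges $A$ and $B$, and maps $[C,B]$ onto $[A, A+B-C]$. We also have $A+B-C \in (A,B)$ because $C \in (A,B)$.

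Now set $\psi_j (X) = -\phi_j (\rho (X))$, $X \in [A,B]$, $j=1,2$. The two sign changes (inside via $\rho$, outside via the minus sign) cancel in the order relation, so for $X,Y \in [A,B]$
$$
X \le Y \iff \rho(Y) \le \rho(X) \iff \phi_j (\rho(Y)) \le \phi_j (\rho (X)) \iff \psi_j (X) \le \psi_j (Y).
$$
Hence each $\psi_j$ is an order embedding of $[A,B]$ into $S_n$. Since $\rho$ is a homeomorphism exchanging $A$ and $B$, and $\phi_j$ is continuous at $A$ and $B$, the map $\psi_j$ is continuous at $B = \rho (A)$ and at $A = \rho(B)$.

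The hypothesis $\phi_1 = \phi_2$ on $[C,B]$ translates into $\psi_1 = \psi_2$ on $\rho([C,B]) = [A, C']$ with $C' = A+B-C \in (A,B)$. Corollary \ref{kakjihje2} therefore applies to $\psi_1,\psi_2$ and gives $\psi_1 = \psi_2$ on all of $[A,B]$. Composing with $\rho$ again and changing sign yields $\phi_1 = \phi_2$ on $[A,B]$.

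The only thing needing care is that the normalization hidden in Corollary \ref{kakjihje2} (it came from Lemma \ref{kakjihje}, which assumes $\phi_j(0)=0$) does not require a common value at $A$. Corollary \ref{kakjihje2} as stated imposes no such condition. In any case $\psi_1(A) = -\phi_1(B) = -\phi_2(B) = \psi_2(A)$ because $B \in [C,B]$, so translating both maps by the same constant to get the normalization of Lemma \ref{kakjihje} is harmless. I do not expect any genuine obstacle; the argument is purely formal.
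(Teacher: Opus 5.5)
Your proof is correct: conjugating by the order-reversing involution $X \mapsto A+B-X$ and negating the values turns the hypothesis into exactly that of Corollary~\ref{kakjihje2}. All of that corollary's assumptions (order embedding, continuity at both endpoints, $C' = A+B-C \in (A,B)$) transfer as you verify.

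This is essentially the paper's approach. Its one-line proof (``in the same way'') amounts to rerunning the argument of Lemma~\ref{kakjihje} at the upper endpoint, and the paper uses the same duality $X \mapsto I-\phi(I-X)$ in Corollary~\ref{posle1} and Proposition~\ref{nice}. You have simply made the reduction explicit.
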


\begin{corollary}\label{kakjihje4}
Let $n \ge 2$, $A,B \in S_n$ with $A<B$, and $\phi_1 , \phi_2 : [A,B] \to S_n$ be order embeddings that are continuous at $A$ and $B$. 
Let $C,D \in S_n$ with $A < C < D < B$ and assume that $\phi_1 (X) = \phi_2 (X)$ for every $X \in  [C,D]$. Then $\phi_1 (X) = \phi_2 (X)$ for every $X \in  [A,B]$.
\end{corollary}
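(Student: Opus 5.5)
The plan is to apply Corollary \ref{kakjihje2} and Lemma \ref{kakjihje3} in succession, using the intermediate interval $[A,D]$ as a bridge. Both $\phi_1$ and $\phi_2$ are continuous at $A$ and $B$. By Corollary \ref{verc2} they are therefore continuous on all of $[A,B]$. In particular, their restrictions to any subinterval $[A',B']\subset[A,B]$ with $A'<B'$ are order embeddings that are continuous at $A'$ and $B'$.

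\emph{Step 1.} Restrict both maps to the interval $[A,D]$; note $A<D$ since $A<C<D$. These restrictions are order embeddings continuous at the endpoints $A$ and $D$. The point $C$ lies in $(A,D)$, and $\phi_1=\phi_2$ on $[C,D]$. By Lemma \ref{kakjihje3}, applied with $B$ replaced by $D$, the two maps coincide on all of $[A,D]$.

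\emph{Step 2.} Return to the full interval $[A,B]$. Now $\phi_1=\phi_2$ on $[A,D]$, and $D\in(A,B)$. Corollary \ref{kakjihje2}, with $C$ replaced by $D$, gives $\phi_1=\phi_2$ on $[A,B]$.

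The only point needing care is the continuity hypothesis at the new endpoint $D$ in Step 1. This is exactly what Corollary \ref{verc2} provides, so there is no genuine obstacle. One should also check that Lemma \ref{kakjihje3} is indeed available: the paper asserts it is proved in the same way as Corollary \ref{kakjihje2}. If a direct proof were wanted instead, one would apply Corollary \ref{kakjihje2} to the maps $X\mapsto-\phi_j(-X)$ on $[-B,-A]$, which reverses the roles of the two endpoints.
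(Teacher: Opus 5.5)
Your proposal is correct and is exactly the paper's proof: Corollary \ref{verc2} gives continuity on all of $[A,B]$, then Lemma \ref{kakjihje3} on $[A,D]$ with $C\in(A,D)$, then Corollary \ref{kakjihje2} with $D\in(A,B)$. Your side remark is also valid: Lemma \ref{kakjihje3} follows from Corollary \ref{kakjihje2} applied to $X\mapsto-\phi_j(-X)$ on $[-B,-A]$.
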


\begin{proof}
By Corollary \ref{verc2}, both $\phi_1$ and $\phi_2$ are continuous on $ [A,B]$. It follows that we can apply Lemma \ref{kakjihje3} for the interval $ [A,D]$ with $C \in (A,D)$ to conclude that $\phi_1$ and $\phi_2$ coincide on $ [A,D]$. Using Corollary \ref{kakjihje2} we get $\phi_1 = \phi_2$.
\end{proof}

\begin{corollary}\label{jjmj}
Let $n \ge 2$, $\U \subset S_n$ be a matrix domain, $A,B,C,D \in \U$ with $A< B$, $C<D$, and $ [A,B],  [C,D] \subset \U$, and $\phi_1 , \phi_2 : \U \to S_n$ be order embeddings. Assume that the intersection $ [A,B] \cap  [C,D]$ has a nonempty interior and that $\phi_1 (X) = \phi_2(X)$ for every $X \in  [A,B]$. Then $\phi_1 (X) = \phi_2(X)$ for every $X \in  [C,D]$.
\end{corollary}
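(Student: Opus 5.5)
The plan is to reduce to Corollary \ref{kakjihje4} by finding a small interval inside the overlap of the two given intervals. Since $[A,B] \cap [C,D]$ has nonempty interior, we can pick a matrix $Z$ and a positive real $\varepsilon$ such that the closed operator-norm ball around $Z$ of radius $2\varepsilon$ lies in the interior of $[A,B] \cap [C,D]$. That ball equals the matrix interval $[Z - 2\varepsilon I, Z + 2\varepsilon I]$. Set $C' = Z - \varepsilon I$ and $D' = Z + \varepsilon I$. Then $C' < D'$, and $[C',D'] \subset [A,B]$, so $\phi_1$ and $\phi_2$ agree on $[C',D']$ by hypothesis.

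Next we check that $C' < D'$ sits strictly inside $(C,D)$, i.e. $C < C'$ and $D' < D$. The matrix $Z - 2\varepsilon I$ lies in $[C,D]$, so $C \le Z - 2\varepsilon I$. Hence $C' - C = (Z - 2\varepsilon I - C) + \varepsilon I \ge \varepsilon I > 0$. Symmetrically, $D - D' \ge \varepsilon I > 0$.

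Now restrict $\phi_1$ and $\phi_2$ to $[C,D] \subset \U$. The restrictions are still order embeddings. By Proposition \ref{aqaq}, $\phi_1$ and $\phi_2$ are continuous on all of $\U$, and in particular at $C$ and $D$. So all hypotheses of Corollary \ref{kakjihje4} hold, applied to the interval $[C,D]$ and the subinterval $[C',D']$ with $C < C' < D' < D$. That corollary gives $\phi_1 = \phi_2$ on $[C,D]$.

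There is no real obstacle. The only points needing care are that the nonempty interior of the overlap yields a small interval strictly inside $(C,D)$, and that continuity at the endpoints $C$ and $D$ comes from Proposition \ref{aqaq}.
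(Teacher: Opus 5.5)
Your proposal is correct and follows the paper's proof: find a subinterval $[C',D']$ of the overlap with $C < C' < D' < D$, then apply Corollary \ref{kakjihje4}. Your ball-of-radius-$2\varepsilon$ construction is just an explicit way to get this strict nesting. You also state outright that continuity at $C$ and $D$ comes from Proposition \ref{aqaq}, a point the paper's proof leaves implicit.
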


\begin{proof}
By the assumption on a nonempty interior of the intersection of matrix intervals we know that there exist $E,F \in \U$ such that $(E,F) \subset   [A,B] \cap  [C,D]$. We can futher find $M,N \in \U$ such that $E < M < N < F$. Then 
$C \le E < M < N < F \le D$ and because $\phi_1 (X) = \phi_2(X)$ for every $X \in  [M,N]$ we deduce from Corollary \ref{kakjihje4} that $\phi_1$ and $\phi_2$ coincide on $ [C,D]$.
\end{proof}

A straightforward consequence of Corollary \ref{jjmj} and Lemma \ref{trivialn} is the following identity theorem.

\begin{theorem}\label{mmjm}
Let $n \ge 2$, $\U \subset S_n$ be a matrix domain, and $\mathcal{W} \subset \U$ a nonempty open subset. Assume that
$\phi_1 , \phi_2 : \U \to S_n$ are order embeddings such that $\phi_1 (X) = \phi_2(X)$ for every $X \in  \mathcal{W}$. Then $\phi_1 = \phi_2$.
\end{theorem}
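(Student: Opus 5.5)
We apply Lemma \ref{trivialn} to a fixed point $Z \in \mathcal{W}$. Since $\mathcal{W}$ is open, we choose $\varepsilon>0$ with $[Z-\varepsilon I, Z+\varepsilon I] \subset \mathcal{W} \subset \U$. Let $\mathcal{W}'$ be the set from Lemma \ref{trivialn}, built with this base point $Z$. By that lemma every $X \in \U$ lies in $\mathcal{W}'$. So for each $X \in \U$ there is a chain of closed intervals $[A_1,B_1], \ldots, [A_m,B_m] \subset \U$ with the following properties: $A_j<B_j$; consecutive intervals meet in a set with nonempty interior; $Z \in (A_1,B_1)$; and $X \in (A_m,B_m)$.

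We then argue along the chain. For the start, the intervals $[A_1,B_1]$ and $[Z-\varepsilon I, Z+\varepsilon I]$ intersect in a set with nonempty interior. Indeed, $Z$ lies in the interior of both: it is in the open set $(A_1,B_1)$, and it is in $(Z-\varepsilon I, Z+\varepsilon I)$. The maps $\phi_1$ and $\phi_2$ agree on $[Z-\varepsilon I, Z+\varepsilon I] \subset \mathcal{W}$, and both are order embeddings of $\U$. Corollary \ref{jjmj} therefore gives $\phi_1=\phi_2$ on $[A_1,B_1]$.

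Inductively, suppose $\phi_1=\phi_2$ on $[A_j,B_j]$. The intersection $[A_j,B_j]\cap[A_{j+1},B_{j+1}]$ has nonempty interior, so Corollary \ref{jjmj}, applied to this pair of intervals, gives agreement on $[A_{j+1},B_{j+1}]$. After $m$ steps the two maps agree on $[A_m,B_m]$, and this interval contains $X$. Since $X\in\U$ was arbitrary, $\phi_1=\phi_2$.

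The argument has no real obstacle, because the analytic content is already in Corollary \ref{jjmj}. That corollary rests on the continuity result (Proposition \ref{aqaq} and Corollary \ref{verc2}) and on the rigidity Lemma \ref{kakjihje}. The only points to check are two. First, the hypotheses of Corollary \ref{jjmj} require closed intervals inside $\U$ with $A<B$; this holds for every interval in the chain by construction. Second, the initial interval around $Z$ must overlap $[A_1,B_1]$ with nonempty interior; this was verified above.
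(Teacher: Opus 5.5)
Your proposal is correct and is essentially the paper's proof, which obtains the identity theorem as a direct consequence of Lemma \ref{trivialn} and Corollary \ref{jjmj}. Your argument does the same thing: you start the chain from a small closed interval $[Z-\varepsilon I, Z+\varepsilon I]\subset\mathcal{W}$ and propagate agreement along the intervals step by step.
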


The following technical lemma is easy and has been proved in the complex case in \cite[Lemma 4.2]{MoS}. Exactly the same proof works also in the real case.

\begin{lemma}\label{diverge}
Let $A \in S_n$ and let $\mathcal{W}_A$ be defined as in Introduction. Define $\theta_A : \mathcal{W}_A \to S_n$ by $\theta_A (X) = (XA + I)^{-1}X$, $X \in \mathcal{W}_A$.
Suppose that a sequence $(X_n)\subset \mathcal{W}_A$ converges to a matrix $X\in S_n \setminus \mathcal{W}_A$. 
Then we have $\|\theta_A(X_n)\|\to \infty$.
\end{lemma}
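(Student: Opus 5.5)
The idea is to rewrite $\theta_A(X_k)$ in a form where divergence is forced by an eigenvalue of a symmetric matrix tending to zero. I will write $X_k$ for the sequence, to avoid a clash with the dimension $n$. As in the proof of Lemma \ref{uzivee}, decompose $A = S|A|$ with $S^2=I$ and $S$ commuting with $|A|$. For $Z \in S_n$ set $H(Z) = |A|^{1/2} Z |A|^{1/2} + S$, which is symmetric. The proof of Lemma \ref{uzivee} shows that $ZA+I$ is invertible if and only if $H(Z)$ is. Hence $H(X_k)$ is invertible for every $k$, while $H(X)$ is singular. Since $H$ is continuous, $H(X_k)\to H(X)$, and therefore the smallest absolute value of an eigenvalue of $H(X_k)$ tends to $0$. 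Equivalently, $\|H(X_k)^{-1}\|\to\infty$.

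The key step is to express $\theta_A(Z)$ through $H(Z)^{-1}$. Using the symmetric form $\theta_A(Z) = Z(AZ+I)^{-1}$, the identity $I - A\theta_A(Z) = (AZ+I)^{-1}$ from the proof of Proposition \ref{predmain} gives $A\theta_A(Z) = I - (AZ+I)^{-1}$. Pushing $|A|^{1/2}$ through, I expect the clean formula
$$|A|^{1/2}\theta_A(Z)|A|^{1/2} = S - S\,H(Z)^{-1} S .$$
To verify it, multiply on the right by $H(Z)$ and use $|A|^{1/2} Z A = |A|^{1/2} Z |A|^{1/2}|A|^{1/2}S$, so that everything reduces to $S^2=I$. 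If the right-hand side turns out to be $S - H(Z)^{-1}$ instead, the argument is unaffected. On $\operatorname{Ker}|A| = \operatorname{Ker} A$ both sides should vanish consistently, so some care is needed there. Note that $H(Z)$ acts as $S$ on $\operatorname{Ker} A$, and $S$ is invertible, so restricting to $\operatorname{Im} A$ loses nothing.

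Given the formula, the conclusion follows quickly. Because $S$ is orthogonal, $\|S H(X_k)^{-1} S\| = \|H(X_k)^{-1}\| \to \infty$, so
$$\| |A|^{1/2}\theta_A(X_k)|A|^{1/2}\| \ge \|H(X_k)^{-1}\| - 1 \to \infty .$$
Since $\| |A|^{1/2}\theta_A(X_k)|A|^{1/2}\| \le \|A\|\,\|\theta_A(X_k)\|$, we obtain $\|\theta_A(X_k)\|\to\infty$. In the case $A=0$ the statement is vacuous, because $\mathcal{W}_0 = S_n$.

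The main obstacle is establishing the algebraic identity correctly when $A$ is singular. If it causes trouble, I will fall back on a direct argument. Pick unit vectors $v_k$ with $H(X_k)v_k$ small and argue by contradiction: if $\theta_A(X_k)$ were bounded along a subsequence, I would pass to a limit $Y$. Then $(I - A\theta_A(X_k))(AX_k + I) = I$ yields $(I - AY)(AX+I) = I$ in the limit, which contradicts the non-invertibility of $AX+I$. This second route is in fact cleaner and avoids the decomposition entirely, so I will likely present it as the main proof: boundedness along a subsequence, a convergent sub-subsequence, and the limit identity contradicting $X\notin\mathcal{W}_A$, using that $XA+I$ and $AX+I$ are simultaneously invertible.
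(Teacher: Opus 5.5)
Your proposal is correct. The paper gives no argument of its own for this lemma; it only points to the complex case in \cite{MoS}. Your second route is the natural proof and is complete as written: assume $\|\theta_A(X_k)\|$ stays bounded along a subsequence, extract a limit $Y$, and pass to the limit in $(I-A\theta_A(X_k))(AX_k+I)=I$ to get $(I-AY)(AX+I)=I$. Then use that $AX+I$ and $XA+I$ are invertible simultaneously. The unit vectors $v_k$ you mention are never used. You can even drop the compactness step: $\|(AX_k+I)^{-1}\|\to\infty$ because $AX_k+I$ converges to a singular matrix, while $\|(AX_k+I)^{-1}\|=\|I-A\theta_A(X_k)\|\le 1+\|A\|\,\|\theta_A(X_k)\|$.

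Your first route is also valid. The identity $|A|^{1/2}\theta_A(Z)|A|^{1/2}=S-SH(Z)^{-1}S$ holds for every $Z\in\mathcal{W}_A$, and no separate treatment of ${\rm Ker}\, A$ is needed. However, multiplying on the right by $H(Z)$, as you propose, does not directly close the verification. The clean check starts from $H(Z)S\,|A|^{1/2}=|A|^{1/2}(ZA+I)$, which gives $H(Z)S\,|A|^{1/2}\theta_A(Z)|A|^{1/2}=|A|^{1/2}Z|A|^{1/2}=H(Z)-S$. Then multiply by $(H(Z)S)^{-1}=SH(Z)^{-1}$. That route yields the explicit bound $\|\theta_A(X_k)\|\ge(\|H(X_k)^{-1}\|-1)/\|A\|$ for $A\neq 0$, but it is heavier than the short argument above.
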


We will actually need the following straightforward consequence.

\begin{corollary}\label{kdjeno}
Let $A \in S_n$ and let $\mathcal{U}_A$ be defined as in Introduction. Define $\theta_A : \mathcal{U}_A \to S_n$ by $\theta_A (X) = (XA + I)^{-1}X$, $X \in \mathcal{U}_A$.
Suppose that a sequence $(X_n)\subset \mathcal{U}_A$ converges to a matrix $X\in S_n \setminus \mathcal{U}_A$. 
Then we have $\|\theta_A(X_n)\|\to \infty$.
\end{corollary}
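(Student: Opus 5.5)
The corollary follows from Lemma \ref{diverge} once we check that a boundary point of $\U_A$ cannot lie in $\mathcal{W}_A$. So let $(X_n)\subset \U_A$ converge to some $X \in S_n \setminus \U_A$; there are two cases, according to whether $X$ lies in $\mathcal{W}_A$ or not.

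\emph{Case $X \notin \mathcal{W}_A$.} Since $\U_A \subset \mathcal{W}_A$, the sequence $(X_n)$ lies in $\mathcal{W}_A$, and $\theta_A$ on $\U_A$ is the restriction of the map $\theta_A$ of Lemma \ref{diverge}. That lemma then gives $\|\theta_A(X_n)\| \to \infty$ directly.

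\emph{Case $X \in \mathcal{W}_A \setminus \U_A$.} We show this case cannot occur. The set $\mathcal{W}_A$ is open in the finite-dimensional space $S_n$, since invertibility of $XA+I$ is an open condition. An open subset of a locally path-connected space has open connected components. So $X$ lies in some connected component $\mathcal{V}$ of $\mathcal{W}_A$, and $\mathcal{V}$ is open. Concretely, we can take a small ball around $X$ inside $\mathcal{W}_A$; this ball is convex and hence connected.

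Because $X_n \to X$, some $X_n$ lies in this ball. Then $\U_A \cap \mathcal{V} \neq \emptyset$, so the two components coincide: $\U_A = \mathcal{V}$. This gives $X \in \U_A$, contradicting the assumption $X \notin \U_A$.

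Hence only the first case is possible, and the conclusion of the corollary holds. The only real point is that components of an open set in $S_n$ are open; the rest is immediate.
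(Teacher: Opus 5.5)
Your proposal is correct, and it supplies exactly the step the paper leaves implicit when it calls this a straightforward consequence of Lemma~\ref{diverge}: a limit in $\mathcal{W}_A$ of points of $\mathcal{U}_A$ must lie in $\mathcal{U}_A$, so $X \notin \mathcal{W}_A$ and the lemma applies. (Your use of the openness of components is fine; it would also suffice to note that a component is always closed in $\mathcal{W}_A$, since the closure of a connected set is connected.)
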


\section{Proof of the main result}

\begin{proof}[Proof of Theorem \ref{main}]
By Proposition \ref{aqaq}, the map $\phi$ is continuous. Since $\U$ is open, there exists a positive integer $m$ such that $[0, (1/m)I ] \subset \U$. Set $\U' = \{ mX \, : \, X \in \U \}$ and define $\phi' : \U' \to S_n$ by $\phi' (X) = \phi ((1/m)X)$, $X \in \U'$. Clearly, $[0,I ] \subset \U'$.

For an invertible matrix $T$ and a symmetric matrix $A$ we define the map $\psi_{T,A}$ by
$$
\psi_{T,A} (X) = T(XA + I)^{-1} X T^t , \ \ \ X \in \U_A .
$$
Let $a$ be a positive real number. Then for every symmetric matrix $X$ such that $aX \in \U_A$ we have
$$
\psi_{T,A} (aX) = \psi_{\sqrt{a}T, aA} (X).
$$

It follows that we can assume with no loss of generality that $[0,I ] \in \U$.  Moreover, by Lemma \ref{nicce}, we have $\phi (I) = S >0$. Replacing $\phi$ by the map $X \mapsto S^{-1/2} \phi (X) S^{-1/2}$ we may 
further assume that $\phi (I) = I$. Using Corollary \ref{crev} we conclude that the restriction of $\phi$ to $[0,I ]$ coincides with $\psi_{T,A}$, where $A = T^t T -I$.

We consider the restriction of $\phi$ to $(0,I)$. It follows from Corollary \ref{kdjeno} that  $\psi_{T,A} : \U_A \to S_n$ is a maximal extension of this restriction. We will show that it is unique. Once we will verify this claim the proof will be completed.

Assume on the contrary that there exist a matrix domain $\mathcal{V} \subset S_n$ and an order emedding $\xi : \mathcal{V} \to S_n$ such that $(0,I) \subset \mathcal{V}$, 
 the restriction of $\xi$ to $(0,I)$ coincides with $\psi_{T,A}$, and $\mathcal{V} \not\subset \U_A$. We denote by $\U_0$ the component of $\mathcal{V} \cap \U_A$ that contains $(0,I)$. By Theorem \ref{mmjm} we have
$\psi_{T,A} (X) = \xi (X)$ for every $X \in \U_0$. Take any $E \in \mathcal{V} \setminus \U_A$. Then there exsists a path $\gamma : [0,1 ] \to \mathcal{V}$ such that $\gamma (0) = (1/2)I$ and $\gamma (1) = E$. Set $t_0 = \inf \{ t \in   [0,1 ] \, : \, \gamma (t) \not\in \U_0 \} = \min \{ t \in   [0,1 ] \, : \, \gamma (t) \not\in \U_0 \} > 0$. Set $E_0 = \gamma (t_0) \not\in \U_A$. Take any sequence $(t_k) \subset [0, t_0)$ with $\lim t_k = t_0$. It follows from Corollary \ref{kdjeno} that $\| \xi ( \gamma (t_k)) \| = \| \psi_{T,A} (\gamma (t_k )) \| \to \infty$, contradicting Proposition \ref{aqaq}.
\end{proof}

A careful reader has noticed that as a direct byproduct of the above proof we have the following consequence.

\begin{corollary}\label{glavposl}
Let $n \ge 2$ be an integer, $\U \subset S_n$ a matrix domain and $\phi: \U \to S_n$ an order embedding. Then there exists a unique maximal extension $\psi : \mathcal{W} \to S_n$ of order embedding $\phi$. Moreover, if $\mathcal{W} \not= S_n$, then for every 
sequence $(X_n)\subset \mathcal{W}$ converging to a matrix $X\in S_n \setminus \mathcal{W}$ 
we have $\|\psi (X_n)\|\to \infty$.
\end{corollary}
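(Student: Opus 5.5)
The plan is to reduce to the normalized situation of the proof of Theorem \ref{main} and then read off the maximal extension from the explicit formula. Let $\phi : \U \to S_n$ be an order embedding. Fix $C \in \U$. Translating, we replace $\phi$ by $X \mapsto \phi(X + C) - \phi(C)$ on $\U - C$. Rescaling as in the proof of Theorem \ref{main}, using $\psi_{T,A}(aX) = \psi_{\sqrt{a}T, aA}(X)$, we may assume $[0,I] \subset \U$. Since $\phi(I) = S > 0$ by Lemma \ref{nicce}, we may congruence by $S^{-1/2}$ and assume $\phi(I) = I$. All these normalizations are homeomorphisms of $S_n$ that commute with the notions of extension and maximality, and they preserve the blow-up property $\|\psi(X_n)\| \to \infty$, since they are affine bounded-invertible maps. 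It therefore suffices to treat the normalized case.

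By Theorem \ref{main} (equivalently, Corollary \ref{crev} on $[0,I]$ followed by Theorem \ref{mmjm}), $\phi = \psi_{T,A}$ on $\U$ and $\U \subset \U_A$, where $\psi_{T,A}(X) = T(XA+I)^{-1}XT^t$ on $\U_A$. By Proposition \ref{predmain} this map is an order embedding of $\U_A$. Composing with the congruence by $T$ preserves this, so $\psi_{T,A}$ is an extension of $\phi$. We claim that $\mathcal{W} = \U_A$ and $\psi = \psi_{T,A}$ form the unique maximal extension. Let $\xi : \mathcal{V} \to S_n$ be any extension of $\phi$. It is also an order embedding of a domain extending $\phi|_{(0,I)}$, so the argument at the end of the proof of Theorem \ref{main} applies. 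Let $\U_0$ be the component of $\mathcal{V} \cap \U_A$ containing $(0,I)$. Theorem \ref{mmjm} gives $\xi = \psi_{T,A}$ on $\U_0$. If $\mathcal{V} \not\subset \U_A$, we choose a path in $\mathcal{V}$ leaving $\U_0$ at a first point $E_0$. Then $E_0 \notin \U_A$: otherwise $E_0$ would lie in the open set $\mathcal{V} \cap \U_A$, contradicting the fact that the path exits the component $\U_0$ there. Corollary \ref{kdjeno} then makes $\|\xi\|$ blow up along the path, which contradicts the continuity of $\xi$ at $E_0$ (Proposition \ref{aqaq}). Hence $\mathcal{V} \subset \U_A$.

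Next, $\mathcal{V}$ is connected and contained in $\U_A$, and $\xi$ and $\psi_{T,A}|_{\mathcal{V}}$ are order embeddings agreeing on the open set $(0,I)$. Theorem \ref{mmjm} gives $\xi = \psi_{T,A}$ on $\mathcal{V}$. So every extension is a restriction of $\psi_{T,A}$, which gives both maximality and uniqueness.

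The blow-up statement is exactly Corollary \ref{kdjeno} applied to $\theta_A$. Congruence by the fixed invertible $T$ preserves $\|\cdot\| \to \infty$, since $\|TYT^t\| \ge \|T^{-1}\|^{-2}\|Y\|$. Undoing the normalizations, which are bi-Lipschitz affine maps in the domain variable and affine with invertible linear part in the range, transports this property back to the original $\phi$.

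The only delicate point is the path argument showing $\mathcal{V} \subset \U_A$, specifically that the first exit point from $\U_0$ lies outside $\U_A$ and not merely outside $\U_0$. I would handle this via openness of $\mathcal{V} \cap \U_A$ and the connectedness of the path segment, as sketched above.
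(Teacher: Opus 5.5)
Your proposal is correct and follows essentially the paper's route: the corollary is read off from the end of the proof of Theorem \ref{main}, where $\psi_{T,A}$ on $\U_A$ is shown to be the unique maximal extension. That argument uses Theorem \ref{mmjm}, the first-exit path argument with Corollary \ref{kdjeno}, and continuity from Proposition \ref{aqaq}, and the blow-up property is then carried back through the affine normalizations exactly as you describe. Your justification that the first exit point $E_0$ lies outside $\U_A$ (via openness of $\mathcal{V}\cap\U_A$ and a small connected ball around $E_0$) fills in a step the paper leaves implicit.
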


\section{Final remarks}

\begin{enumerate}
\item Let us start by explaining what the main theorem tells for order emebeddings of open matrix intervals. We have four types of open matrix intervals. These are $(A,B)$, $(A, \infty) = \{ X \in S_n \, : \, X > A \}$, 
$(-\infty , A) = \{ X \in S_n \, : \, X < A \}$, and $(-\infty , \infty ) = S_n$. Here, $A,B \in S_n$ is any pair of symmetric matrices satisfying $A < B$. If two open matrix intervals $\mathcal{I}$ and $\mathcal{J}$ are order isomorphic and $\varphi : \mathcal{I} \to \mathcal{J}$ is an order isomorphism, then every order embedding $\phi : \mathcal{I} \to S_n$ is of the form $\phi = \phi' \circ \varphi$, where $\phi'$ is an order embedding of $\mathcal{J}$. Consequently, if we know the general form of order embeedings of $\mathcal{J}$, then we have a complete descritpion of order embeddings of $\mathcal{I}$. Let $A,B \in S_n$ with $A < B$. Then the map $X \mapsto 
{1 \over 2} (B-A)^{1/2} X (B-A)^{1/2} + {1 \over 2}(A+B)$ is an order isomorphism of $(-I,I)$ onto $(A,B)$. Clearly, $X \mapsto X-A$ is an order automorphism of $(A, \infty)$ onto $(0, \infty)$. Similarly, each interval $(-\infty, A)$ is via a translation order isomorphic to $(-\infty , 0)$. The map $X \mapsto - X^{-1}$ is an order isomorphism of $(0, \infty)$ onto $(-\infty , 0)$. And finally, the map $X \mapsto (I-X)^{-1} - {1 \over 2}I$ is an order isomorphism of $(-I,I)$ onto $(0, \infty)$.

Thus, every open matrix interval $\mathcal{I} \not= (-\infty , \infty)$ is order isomorphic to $(-I,I)$ and we we know how to construct an order isomorphism of $\mathcal{I}$ onto $(-I,I)$. Therefore we know exactly what are all order embeddings of any open matrix interval once we have a complete description of all order embeddings of matrix intervals $(-I, I)$ and $S_n$.

\begin{theorem}\label{openefekt}
Let $n$ be an integer, $n \ge 2$, and $\phi : (-I,I) \to S_n$ a map. Then $\phi$ is an order embedding if and only if there exist a real invertible $n \times n$ matrix $T$ and $A,B \in S_n$ satisfying $\| A \| \le 1$ such that
$$
\phi (X) = T(XA + I)^{-1} X T^t + B, \ \ \ X \in ( -I , I).
$$
\end{theorem}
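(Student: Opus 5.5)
We prove the two directions separately.

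\emph{Necessity.} Let $\phi : (-I,I) \to S_n$ be an order embedding. Set $B = \phi(0)$ and apply Theorem \ref{main} to the map $X \mapsto \phi(X) - B$. This is an order embedding of the matrix domain $(-I,I)$, which contains $0$, and it sends $0$ to $0$. The theorem gives an invertible $T$ and $A \in S_n$ such that $(-I,I) \subset \U_A$ and $\phi(X) = T(XA+I)^{-1}XT^t + B$ on $(-I,I)$.

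It remains to show $\|A\| \le 1$. Since $(-I,I) \subset \U_A \subset \mathcal{W}_A$, the matrix $XA + I$ is invertible for every $X$ with $\|X\| < 1$. Suppose $\|A\| > 1$, and let $\lambda$ be an eigenvalue of $A$ with $|\lambda| > 1$ and unit eigenvector $x$. Put $X = -\lambda^{-1} xx^t$. Then $\|X\| = |\lambda|^{-1} < 1$, and
$$
(XA + I)x = -\lambda^{-1}\lambda x + x = 0,
$$
so $XA + I$ is singular. This contradicts $X \in \mathcal{W}_A$, hence $\|A\| \le 1$.

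\emph{Sufficiency.} Let $\|A\| \le 1$, $T$ invertible and $B \in S_n$. Conjugating by $T$ and translating by $B$ are order automorphisms of $S_n$. By Proposition \ref{predmain}, $\psi_A$ is an order isomorphism of $\U_A$ onto $\U_{-A}$, so its restriction to any subset of $\U_A$ is an order embedding. It therefore suffices to prove $(-I,I) \subset \U_A$.

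Since $(-I,I)$ is convex, hence connected, and contains $0$, it is enough to show $(-I,I) \subset \mathcal{W}_A$; then $(-I,I)$ lies in the component $\U_A$ of $\mathcal{W}_A$ containing $0$. Take $X$ with $\|X\| < 1$. Then
$$
\|XA\| \le \|X\|\,\|A\| < 1,
$$
so $XA + I$ is invertible by the Neumann series. Thus $X \in \mathcal{W}_A$.

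The main point needing care is the definition of an order embedding: it is the two-sided condition $X \le Y \iff \phi(X) \le \phi(Y)$. Proposition \ref{predmain} gives an order isomorphism of $\U_A$ onto $\U_{-A}$, which supplies both implications, so restricting it to $(-I,I)$ yields an order embedding. The remaining steps are the elementary eigenvector argument for necessity and the Neumann series bound for sufficiency.
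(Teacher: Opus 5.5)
Your proposal is correct and follows the paper's proof. For sufficiency, the bound $\|XA\|\le\|X\|\,\|A\|<1$ places $(-I,I)$ in $\U_A$, so Proposition \ref{predmain} applies. For necessity, Theorem \ref{main} is applied to $\phi-\phi(0)$, and $\|A\|\le 1$ follows by taking a rank-one $X$ built from an eigenvector of $A$ with $|\lambda|>1$, which makes $XA+I$ singular; this is the paper's diagonalization argument written without the change of basis.
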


\begin{proof}
If $T,A,B$ are as above then for every $X \in ( -I,I)$ we have $\| XA \| \le \| X \| \, \| A \| < 1$, and therefore, $XA + I$ is invertible. It follows that the map $\phi$ is an order embedding of $(-I,I)$.

To prove the other direction we assume that $\phi$ is an order embedding. Replacing the map $\phi$ by the map $X \mapsto \phi (X) - B$, $X \in (-I,I)$, where $B = \phi (0)$, we can assume with no loss of generality that $\phi (0) = 0$. Using our main theorem we see that the map $\phi$ is of the desired form for some invertible $T$ and some $A$ with the property that $XA + I$ is invertible for every $X \in (-I,I)$. All we need to do is to verify that $\| A \| \le 1$. Assume that this is not true. Then, after identifying matrices with operators and a suitable change of orthonormal bases we may assume that $A$ is diagonal, $A = {\rm diag}\, (a_1 , \ldots , a_n)$ with $a_1, \ldots , a_n \in \mathbb{R}$ and $| a_1 | > 1$. 
Then $X = {\rm diag}\, \left (-{ 1 \over a_1} ,  0, \ldots , 0 \right) \in (-I,I)$ and $XA + I$ is not invertible, a contradiction.
\end{proof}

\begin{theorem}\label{vseskupaj}
Let $n$ be an integer, $n \ge 2$, and $\phi : S_n \to S_n$ a map. Then $\phi$ is an order embedding if and only if there exist a real invertible $n \times n$ matrix $T$ and $B \in S_n$ such that
$$
\phi (X) = T X T^t + B
$$
for every $X \in S_n$.
\end{theorem}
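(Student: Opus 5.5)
The proof has two directions. For the easy one, suppose $\phi(X) = TXT^t + B$ with $T$ invertible. Then $\phi(Y)-\phi(X) = T(Y-X)T^t$. Congruence by an invertible matrix preserves and reflects positivity, so $X \le Y \iff \phi(X) \le \phi(Y)$, and $\phi$ is an order embedding.

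For the converse, let $\phi : S_n \to S_n$ be an order embedding and set $B = \phi(0)$. Replacing $\phi$ by $X \mapsto \phi(X) - B$, we may assume $\phi(0)=0$. Since $S_n$ is a matrix domain containing $0$, Theorem \ref{main} gives $A \in S_n$ and an invertible $T$ such that $S_n \subset \U_A$ and
$$\phi(X) = T(XA+I)^{-1}XT^t, \qquad X \in S_n.$$
It therefore suffices to show that $A=0$.

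The inclusion $S_n \subset \U_A \subset \mathcal{W}_A$ says that $XA+I$ is invertible for every symmetric $X$. Suppose $A \neq 0$. Diagonalize $A$ by an orthogonal matrix $O$, so that $A = O\,{\rm diag}(a_1,\ldots,a_n)O^t$ with $a_1 \neq 0$. Put
$$X = O\,{\rm diag}\left(-\frac{1}{a_1},0,\ldots,0\right)O^t \in S_n.$$
Then $O^t(XA+I)O = {\rm diag}(0,1,\ldots,1)$, which is singular, so $XA+I$ is singular. This contradicts $S_n \subset \mathcal{W}_A$, hence $A=0$ and $\phi(X) = TXT^t$. Undoing the translation gives $\phi(X) = TXT^t + B$.

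There is no real obstacle: everything rests on Theorem \ref{main}, and the only step to check is the choice of a rank-one diagonal $X$ that makes $XA+I$ singular. This is the same construction used in the proof of Theorem \ref{openefekt}, with the bound $|a_1|>1$ replaced by $a_1 \neq 0$, which is what the unrestricted domain $S_n$ allows.
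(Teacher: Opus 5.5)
Your proof is correct and follows the paper's argument: translate so that $\phi(0)=0$, apply Theorem \ref{main}, and use the invertibility of $XA+I$ for every $X \in S_n$ to force $A=0$. Your explicit rank-one choice $X = O\,{\rm diag}(-1/a_1,0,\ldots,0)O^t$ simply fills in the step the paper leaves to the reader.
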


\begin{proof}
One direction is trivial and to prove the other direction we use the main theorem to see that there exist a real invertible $n \times n$ matrix $T$ and $A,B \in S_n$ such that for every $X \in S_n$ the matrix $XA + I$ is invertible and
$$
\phi (X) = T(XA + I)^{-1} X T^t + B.
$$
The condition that $XA + I$ is invertible for every $X \in S_n$ implies that $A=0$, as desired.
\end{proof}

\item   
The description of order automorphisms of the set of all self-adjoint operators on a Hilbert space was obtained in \cite{Mo1}.
A shorter proof given in \cite{Se0} was based on the following observation.
Two matrices $A,B \in S_n$, $A\not=B$, are adjacent if and only if the following condition is staisfied:
\begin{itemize}
\item $A$ and $B$ are comparable and any two matrices $C,D \in S_n$ that are in between $A$ and $B$ are comparable.
\end{itemize}
Actually, this equivalence was proved for hermitian matrices but the same arguments work also for real symmetric matrices. Let us outline the proof. Assume first that $A$ and $B$ are adjacent. Any symmetric matrix of rank one is of the form $tP$ for some nonzero real number $t$ and some rank one projection $P$. Hence, $B-A = t_0 P$ for some nonzero real number $t_0$  and some rank one projection $P$, and then $B\ge A$ if $t_0 > 0$, and $B\le A$ if $t_0 < 0$. That is, $A$ and $B$ are comparable. We have two possibilities and let us consider just one of them, say, $t_0 > 0$ and $B = A + t_0 P\ge A$. It is then easy to see that any matrix $X$ that is in between $A$ and $B$, that is, $A \le X \le A + t_0 P$, is of the form $X = A + tP$, $0 \le t \le t_0$. Hence, if $C$ and $D$ are in between $A$ and $B$, then $C = A +tP$ and $D = A + sP$ for some real numbers $s,t \in [0,t_0]$ and then either $C \le D$ or $D \le C$ depending on whether $t \le s$ or $s\le t$.

To prove the converse we assume that $A$ and $B$, $A \not= B$, are not adjacent. If they are not comparable, then we are done. If they are comparable, then we have two possibilities and again we will consider just one of them. So, assume that $A \le B$. Then it is easy to guess and not difficult to prove that the condition ${\rm rank}\, (B-A) \ge 2$ yields the existence of $C,D \in  S_n$, $A \le C,D \le B$, such that neither $C \le D$ nor $D \le C$. 

We have characterized adjacency with the use of Loewner's order and therefore if a certain bijective map preserves Loewner's order then it also preserves adjacency. Describing the general form of adjacency preserving maps on various matrix spaces has been a classical problem, see \cite{Hu1, Hu2, Hu3, Hu4, Hu5, Hu6, Hu7, Hu8, HuS, Se3, Wan} and the references therein. 

\item
In the previous remark we have explained the connection between order embeddings (order isomorphisms) and adjacency preserving maps. Now we will show that adjacency preserving maps are closely related to the fundamental theorem of chronogeometry. Let us first recall that two matrices $A$ and $B$ are said to be coherent if ${\rm rank}\, (A-B) \le 1$. Thus, $A$ and $B$ are coherent if and only if $A=B$ or $A$ and $B$ are adjacent.

Let $\mathcal{M} = \{ (x,y,z,t)\, : \, x,y,z,t \in \mathbb{R} \}$ be the classical Minkowski space of all spacetime events.
Two spacetime events $r_1 = (x_1 , y_1 , z_1 , t_1)$ and $r_2 = (x_2 , y_2 , z_2 , t_2)$
are lightlike if the light signal can pass between $r_1$ and $r_2$, that is
$$
(x_2 - x_1 )^2 + (y_2 - y_1 )^2 + (z_2 - z_1 )^2 = c^2 (t_2 - t_1 )^2 .
$$
In the mathematical foundations of special relativity we usually make a harmless normalization assuming that $c=1$.
In 1950 A.D. Aleksandrov \cite{Al1}, see also \cite{Al2, Al3}, proved the fundamental theorem of chronogeometry stating that bijective preservers of lightlikeness on $\mathcal{M}$
 are Lorenz transformations up to a scalar factor and a translation.
It has been known for a long time that the problem of describing the general form of
bijective  lightlikeness preserving maps on $\mathcal{M}$ and the
problem of characterizing bijective adjacency preserving
maps on the space of $2 \times 2$ complex hermitian matrices are equivalent. Indeed, let us
associate to each spacetime event $(x,y,z,t)$ the $2\times 2$ hermitian
matrix
$$
\left[\begin{matrix} t -z & x + iy \cr x - iy & t +z \cr \end{matrix}\right].
$$
It is trivial to check that the
events $(x_1 , y_1 , z_1 , t_1)$
and $(x_2 , y_2 , z_2 , t_2)$ are lightlike if and only if
$$
\det \left( \left[\begin{matrix} t_1 -z_1 & x_1 + iy_1 \cr x_1 - iy_1 & t_1 +z_1 \cr \end{matrix} \right]
- \left[\begin{matrix} t_2 -z_2 & x_2 + iy_2 \cr x_2 - iy_2 & t_2 +z_2 \cr \end{matrix}\right]
\right) = 0
$$
(note that here we have already assumed that $c=1$).
The determinant of a $2\times 2$ matrix is zero 
if and only if this matrix is of rank at most one. Thus,
two 
spacetime events are lightlike if and only if the corresponding $2\times 2$ hermitian
matrices are coherent. In other words, Aleksandrov's theorem can be reformulated as the description of the general form of bijective maps on the space of $2 \times 2$ hermitian matrices that preserve adjacency (in the presence of the bijectivity assumption the property of preserving adjacency is  equivalent to the property of preserving coherency).

An optimal version of the fundamental theorem of chronogeometry has been recently proved in \cite{MoS2}. We have shown that the fundamental theorem of chronogeometry is closely related to the problem of determining the general form of order embeddings on the space of $2 \times 2$ hermitian matrices.  Since $S_2$ can be considered as a real analogue of the space of $2 \times 2$ complex hermitian matrices, it is  not surprising that several ideas used in our proofs are related to some proof techniques in \cite{MoS2}.
 
\item 
In this paper we have described the general form of order embeddings of matrix domains. Let us explain why it is natural to assume in our main result that order embeddings are defined on sets that are connected and open. If, for example we take any order embedding $\phi : (0,I) \cup (2I,3I)$ and denote the restrictions of $\phi$ to $(0,I)$ and $(2I,3I)$ by $\phi_1$ and $\phi_2$, respectively, then we obviously have $\phi_1 (X) \le \phi_2 (Y)$ for every $X \in (0,I)$ and every $Y \in (2I,3I)$, but once this condition is fulfilled the behaviour of $\phi_1$ is completely unrelated to that of $\phi_2$. In order to demonstrate that the openess assumption is natural in our investigations we consider
an easy example of a non-continuous order emebedding of $[0,I]$ into $S_n$.  Let $E,F \in S_n$ with $E \le 0$ and $F \ge I$. Then the map  $\phi  : [0,I] \to S_n$ given by $\phi (X) = X$, $X \in [0,I] \setminus \{0, I \}$, $\phi (0) = E$, $\phi (I) = F$, is obvioulsy an order embedding. 

\item
The above example leads to the next natural question.
We already know that the structure of order embeddings of $[0,I]$ can be nicely described under the additional assumption of continuity at $0$ and $I$. It si then natural to ask what happens in the absence of the assumption of the continuity at the end points. 
A possible conjecture would be that we have a nice behaviour on $[0,I] \setminus \{0, I \}$, while $\phi (0)$ and $\phi(I)$ can be arbitrary matrices that are below and above the image $\phi ( [0,I] \setminus \{0, I \} )$, respectively. As we shall see, this conjecture holds true.

\begin{theorem}\label{bzgf}
Let $n \ge 2$ be an integer and $\phi  : [0,I] \to S_n$ an order emebedding. Then there exist $A,B \in S_n$ and an invertible real $n \times n$ matrix $T$ such that $[0,I] \subset \U_A$ and
$$
\phi (X) = T (XA + I)^{-1} XT^t +B, \ \ \ X \in [0,I] \setminus \{0, I \}.
$$
\end{theorem}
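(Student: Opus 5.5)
The plan is to restrict $\phi$ to the open interval $(0,I)$, apply the main theorem there, and then show that the resulting maximal domain contains the closed interval $[0,I]$.

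\textbf{Step 1: the open interval.} The restriction of $\phi$ to $(0,I)$ is an order embedding of a matrix domain. Fix $C=(1/2)I\in(0,I)$ and set $B=\phi(C)$. Consider the map $Y\mapsto \phi(Y+C)-B$ on the domain $(0,I)-C$, which contains $0$ and sends $0$ to $0$. Theorem \ref{main} gives $A'\in S_n$ and an invertible $T'$ with $(0,I)-C\subset\U_{A'}$ and $\phi(Y+C)=T'(YA'+I)^{-1}YT'^t+B$.

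This is the correct form, but it is centred at $C$ instead of at $0$. To recentre, I will use Theorem \ref{openefekt} through an order isomorphism of $(0,I)$ onto $(-I,I)$. Alternatively, I can argue directly that the maximal extension from Corollary \ref{glavposl} is a map $\psi$ on a domain $\mathcal{W}$ which contains $0$ as a boundary point or interior point. Once I know $0\in\mathcal{W}$, renormalising at $0$ via Theorem \ref{main} gives $\psi(X)=T(XA+I)^{-1}XT^t+B$ on $\U_A\supset\mathcal{W}$ with $B=\psi(0)$. So the real task is Step 2.

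\textbf{Step 2: the maximal domain contains $[0,I]$ (main obstacle).} Let $\psi:\mathcal{W}\to S_n$ be the unique maximal extension of $\phi|_{(0,I)}$ from Corollary \ref{glavposl}. I want to show $[0,I]\subset\mathcal{W}$. Suppose some $X_0\in[0,I]$ is not in $\mathcal{W}$. Choose a sequence $X_k\in(0,I)$ with $X_k\to X_0$, for instance $X_k=(1-1/k)X_0+(1/(2k))I$. By Corollary \ref{glavposl}, $\|\psi(X_k)\|\to\infty$.

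On the other hand, $0\le X_k\le I$ gives $\phi(0)\le\phi(X_k)=\psi(X_k)\le\phi(I)$. Hence $\psi(X_k)$ is trapped in the bounded set $[\phi(0),\phi(I)]$, and the estimate from the proof of Lemma \ref{automcont} gives $\|\psi(X_k)\|\le\|\phi(0)\|+\|\phi(I)-\phi(0)\|$. This contradicts $\|\psi(X_k)\|\to\infty$. Therefore $[0,I]\subset\mathcal{W}$, and in particular $0\in\mathcal{W}$.

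\textbf{Step 3: normalise and conclude.} Since $0\in\mathcal{W}$, apply Theorem \ref{main} to $X\mapsto\psi(X)-\psi(0)$ on $\mathcal{W}$. This gives $A,T$ with $\mathcal{W}\subset\U_A$ and $\psi(X)=T(XA+I)^{-1}XT^t+B$, where $B=\psi(0)$. Note that $B$ need not equal $\phi(0)$, which is why the endpoints $0$ and $I$ are excluded from the formula. Steps 2 and 3 together give $[0,I]\subset\mathcal{W}\subset\U_A$. Finally, $\psi=\phi$ on $(0,I)$, so the formula holds on $(0,I)$.

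\textbf{Remaining points.} The formula is still needed on $[0,I]\setminus(\{0,I\}\cup(0,I))$, i.e. on the singular matrices $X\neq 0$ and the matrices $X\neq I$ with $I-X$ singular. For these I will compare $\phi$ with $\psi$ on $[0,I]$. The plan is to use order-embedding sandwiching: for such $X$, with say $X$ singular, $\phi(X)$ is determined by the values of $\phi$ on $(0,I)$ via comparability. Take $\varepsilon_k\downarrow 0$ and $Y_k=X+\varepsilon_k I\in(0,I)$, say, when $X\neq I$. Then $\phi(X)\le\phi(Y_k)=\psi(Y_k)\to\psi(X)$, so $\phi(X)\le\psi(X)$.

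For the reverse inequality, the natural tool is Lemma \ref{less}: compare the quantities $\alpha(\cdot,P;\phi(X))$ and $\alpha(\cdot,P;\psi(X))$. For every $Z\in(0,I)$ with $Z\le X$, order embedding gives $\psi(Z)\le\phi(X)$. The continuity of $\psi$ together with the order isomorphism property of $\psi$ on $\U_A$ (Proposition \ref{predmain}) should then force $\phi(X)=\psi(X)$. I expect this identification at the boundary points other than $0$ and $I$ to be the delicate part.
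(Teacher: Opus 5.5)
Your Steps 2 and 3 follow the paper's route. You restrict to $(0,I)$ and take the unique maximal extension $\psi:\mathcal{W}\to S_n$ from Corollary \ref{glavposl}. The bound $\phi(0)\le\psi(X_k)\le\phi(I)$ then forces $[0,I]\subset\mathcal{W}$, and Theorem \ref{main} gives the form of $\psi$. Your remark that $B=\psi(0)$ need not equal $\phi(0)$ is correct.

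The gap is the last step: proving $\phi(X)=\psi(X)$ for boundary points $X\neq 0,I$ of $[0,I]$. This is where the content of the theorem lies, you leave it open, and the tools you sketch do not work there.

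\begin{itemize}
\item $X+\varepsilon_k I$ lies in $(0,I)$ only when $1$ is not an eigenvalue of $X$, not merely when $X\neq I$. If $Xx=x$ for a unit vector $x$, then every $Y\in[0,I]$ with $Y\ge X$ also satisfies $Yx=x$. So no interior point lies above such an $X$.
\item If $X$ is singular, no $Z\in(0,I)$ satisfies $Z\le X$, since $0<Z\le X$ would force $X>0$. So the family you want to feed into Lemma \ref{less} is empty.
\end{itemize}

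Comparison with interior points therefore gives only one inequality when exactly one of $0,1$ is an eigenvalue of $X$, and nothing when both are.

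The missing idea is to approach $X$ along rank-one rays from an interior point, rays that cross the boundary of $\{Y: Y\le X\}$ while still inside $(0,I)$.

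Suppose $1$ is an eigenvalue of $X$ but $0$ is not, and pick $a>0$ with $aI<X$. For $s\in[0,1-a)$ we have $aI+sP\in(0,I)$ and $\phi(aI+sP)=\psi(aI+sP)=\phi(aI)+f_P(s)P'$. Here $f_P$ is continuous and strictly increasing, and $P\mapsto P'$ is a homeomorphism of $P_{n}^1$. Let $R$ be the projection onto the $1$-eigenspace of $X$. For $P\not\le R$ we have $\alpha(aI,P;X)<1-a$, hence $\alpha(\phi(aI),P';\phi(X))=f_P(\alpha(aI,P;X))=\alpha(\phi(aI),P';\psi(X))$ for all $P'$ in a dense subset of $P_{n}^1$. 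Since $\phi(aI)<\phi(X)$ and $\phi(aI)<\psi(X)$ strictly, Corollary \ref{equal} yields $\phi(X)=\psi(X)$.

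You need this dense-set version, which relies on strictness through Lemma \ref{jztzavr}. The directions $P\le R$ leave $(0,I)$ exactly where they reach $X$ and carry no information, so Lemma \ref{less} alone cannot close the argument.

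The case where $0$ but not $1$ is an eigenvalue is symmetric, using rays $bI-sP$ with $X<bI$. When both $0$ and $1$ are eigenvalues, let $R_0,R_1$ be the projections onto the $0$- and $1$-eigenspaces, and sandwich
$\psi(X-\frac1k R_1)=\phi(X-\frac1k R_1)\le\phi(X)\le\phi(X+\frac1k R_0)=\psi(X+\frac1k R_0)$.
The outer equalities come from the two previous cases, and letting $k\to\infty$ with the continuity of $\psi$ gives $\phi(X)=\psi(X)$.
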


\begin{proof}
We first observe that we can assume with no loss of generality that $\phi (0) =0$. And then, of course, we want to show that $\phi$ is of the desired form with $B=0$. We consider the restriction $\phi'$ of $\phi$ to $(0,I)$. By Corollary \ref{glavposl} there exists a unique maximal extension $\psi: \mathcal{W} \to S_n$ of order embedding  $\phi' : (0,I) \to S_n$. Denote $m = \max \{ \| \phi (0) \| , \| \phi(I) \| \}$. For every
$X \in [0,I]$ we have
$$
-m I \le \phi (0) \le \phi (X) \le \phi (I) \le m I,
$$
and therefore, $\| \phi (X) \| \le m$ for every $X \in  [0,I]$. Applying Corollary \ref{glavposl} once more we conclude that $ [0,I] \subset \mathcal{W}$. We need to verify that $\phi (X) = \psi (X)$ for every $X  \in [0,I] \setminus \{0, I \}$. By the definition of $\psi$ we already have $\phi (X) = \psi (X)$ for every $X \in (0,I)$.

Let $X \in  [0,I] \setminus \{ I \}$ such that $1$ is an eigenvalue of $X$ but $0$ is not an eigenvalue of $X$. Then we can find a positive real number $a$ such that $aI < X$. We know that for every $P \in P_{n}^1$ there exists a rank one projection $P'$ such that for every real $s$, $s \in [0, 1-a)$, we have
$$
\phi (aI + sP) \in \{ \phi (aI) + tP' \, : \, t \ge 0 \} .
$$
We know that the map $P \mapsto P'$ is a homeomorphism of $P_{n}^1$.
Let $R$ be the orthogonal projection onto the eigenspace of $X$ corresponding to the eigenvalue $1$ and denote by $\mathcal{P} \subset P_{n}^1$ the set of all rank one projections $P$ such that $P\not\le R$. Then 
$\alpha (aI, P; X) < 1-a$ for every $P \in \mathcal{P}$. It follows that 
$$
\alpha ( \phi (aI), P' ; \phi (X)) = \alpha ( \phi (aI), P' ; \psi (X))
$$
for every $P \in \mathcal{P}$. By Corollary \ref{equal} we have $\phi (X) = \psi (X)$.

In the same way we prove that $\phi (X) = \psi (X)$ for every $X \in  [0,I] \setminus \{ 0 \}$ with the property that $0$ is an eigenvalue of $X$ and $1$ is not an eigenvalue $X$.

Let finally $X$ be an element of $ [0,I]$ such that both $0$ and $1$ are eigenvalues of $X$. Let $R_0$ and $R_1$ be the orthogonal  projections on the eigenspaces corresponding to the eigenvalues $0$ and $1$, respectively. We know that
for every positive integer $k \ge 2$ we have
$$
\phi( X - {1 \over k} R_1 ) =  \psi( X - {1 \over k} R_1 ) \le \phi(X), \psi(X) \le  \phi( X + {1 \over k} R_0 ) =  \psi( X + {1 \over k} R_0 )
$$
and by the continuity of $\psi$ we also know that
$$
\lim  \psi( X - {1 \over k} R_1 ) = \lim \psi( X + {1 \over k} R_0 ) = \psi (X),
$$
and therofore, $\phi(X) = \psi(X)$ in this case, as well.
\end{proof}

\item
Theorem \ref{bzgf} describes the general form of order embeddings $\phi  : [0,I] \to S_n$. The main result in \cite{Sem} describes order automorphisms of $ [0,I]$ onto itself. It covers also the infinite-dimensional case. In the finite-dimensional case we can describe order embeddings of $ [0,I]$ without the bijectivity assumption, while it turns out that in the infinite-dimensional the bijectivity assumption is indispensable. Actually, this difference between the finite-dimensional and the  infinite-dimensional case is quite usual when dealing with linear or general preservers. It is therefore not surprising that the main idea of the proof in this paper is essentially different from the main idea in \cite{Sem}. But of course, a few technical lemmas are quite similar.

\end{enumerate}


\begin{thebibliography}{99}

\bibitem{Al1}A.D. Alexandrov, On Lorentz transformations, {\em Uspehi Mat. Nauk} \textbf{5} (1950), 187.

\bibitem{Al2}A.D. Alexandrov, A contribution to chronogeometry, {\em Canad. J. Math.} \textbf{19} (1967), 1119--1128.

\bibitem{Al3}A.D. Alexandrov, Mappings of spaces with families of cones and spacetime transformations, {\em Annali di Matematica} \textbf{103} (1975), 229--257.


\bibitem{BG}P. Busch and S.P. Gudder, Effects as functions of projective Hilbert space, {\em Lett. Math. Phys.} {\bf 47} (1999), 329--337.

\bibitem{Cho}W.-L. Chow, On the geometry of algebraic homogeneous spaces,
{\em Ann. Math.} {\bf 50} (1949), 32-67.

\bibitem{Fa}C.-A. Faure, An elementary proof of the fundamental theorem of projective
geometry, {\em Geom. Dedicata} {\bf 90} (2002), 145--151.

\bibitem{Hu1}L.K. Hua, Geometries of matrices I. Generalizations of von Staudt's
theorem, {\em Trans. Amer. Math. Soc.} {\bf 57} (1945), 441--481.


\bibitem{Hu2}L.K. Hua, Geometries of matrices ${\rm I}_1$. Arithmetical
construction, {\em Trans. Amer. Math. Soc.} {\bf 57} (1945), 482--490.

\bibitem{Hu3}L.K. Hua, Geometries of symmetric matrices over the real field I,
{\em Dokl. Akad. Nauk. SSSR} {\bf 53} (1946), 95--97.

\bibitem{Hu4}L.K. Hua, Geometries of symmetric matrices over the real field II,
{\em Dokl. Akad. Nauk. SSSR} {\bf 53} (1946), 195--196.

\bibitem{Hu5}L.K. Hua, Geometries of matrices II. Study of involutions in the geometry
of symmetric matrices,
{\em Trans. Amer. Math. Soc.} {\bf 61} (1947), 193--228.

\bibitem{Hu6}L.K. Hua, Geometries of matrices III. Fundamental theorems in the geometries
of symmetric matrices,
{\em Trans. Amer. Math. Soc.} {\bf 61} (1947), 229--255.

\bibitem{Hu7}L.K. Hua, Geometry of symmetric matrices over any field with characteristic
other than two,
{\em Ann. Math.} {\bf 50} (1949), 8--31.

\bibitem{Hu8}L.K. Hua, A theorem on matrices over a sfield and its applications,
{\em Acta Math. Sinica} {\bf 1} (1951), 109--163.

\bibitem{HuS} W.-l. Huang and P. \v Semrl, Adjacency preserving maps on hermitian matrices, {\em Canad. J. Math.} {\bf 60} (2008), 1050--1066.


\bibitem{Mo1}L. Moln\' ar,  Order-automorphisms of the set of bounded observables, {\em J. Math. Phys.} {\bf 42} (2001), 5904--5909. 


\bibitem{MoS}M. Mori and P. \v Semrl, Loewner's theorem for maps on operator domains, {\em Canad. J. Math.} {\bf 75} (2023), 912--944. 

\bibitem{MoS2}M. Mori and P. \v Semrl, Optimal version of the fundamental theorem of chronogeometry, {\em Adv. Math.} {\bf 480}  (2025), 110528, 85pp.

\bibitem{Se0}P. \v Semrl, Comparability preserving maps on bounded observables, {\em Integral Equations Operator Theory} {\bf 62} (2008), 441--454. 

\bibitem{Se3}P. \v Semrl, The optimal version of Hua's fundamental theorem of geometry of rectangular matrices,  {\em Mem. Amer. Math. Soc.} \bf 232 \rm (2014), 74pp.


\bibitem{Sem}P. \v Semrl, Order automorphisms of effect algebras, accepted in {\em Illinois J. Math.}

\bibitem{Wan}Z.-X. Wan, {\em Geometry of matrices}, World Scientific, Singapore, 1996.




\end{thebibliography}
\end{document}